\documentclass[11pt]{article}
\usepackage[a4paper,margin=28mm]{geometry}
\usepackage{amsmath,amssymb,amsthm,mathtools}
\usepackage{enumitem}
\usepackage{booktabs}
\usepackage{placeins}
\usepackage{microtype}
\usepackage{xurl}
\usepackage[hidelinks]{hyperref}

\newtheorem{theorem}{Theorem}[section]
\newtheorem{proposition}[theorem]{Proposition}

\newtheorem{lemma}[theorem]{Lemma}
\newtheorem{remark}[theorem]{Remark}

\newtheorem{example}[theorem]{Example}
\newcommand{\R}{\mathbb R}

\newcommand{\F}{\mathcal F}

\newcommand{\dd}{\,\mathrm d}

\newcommand{\Id}{\mathrm{Id}}
\newcommand{\tr}{\operatorname{tr}}

\title{Computer-assisted local maximality of regular polygons\\
for torsional rigidity}
\author{Beniamin Bogosel\thanks{Faculty of Exact Sciences,
Aurel Vlaicu University of Arad, 2 Elena Dr\u{a}goi Street, Arad, Romania.
Email: \href{mailto:beniamin.bogosel@uav.ro}{\nolinkurl{beniamin.bogosel@uav.ro}}}}
\date{}
\hypersetup{
  pdftitle={Computer-assisted local maximality of regular polygons for torsional rigidity},
  pdfauthor={Beniamin Bogosel},
  pdfsubject={A block-circulant Hessian reduction and validated finite-element proof},
  pdfkeywords={torsional rigidity, regular polygon, shape Hessian, validated numerics, FLINT, finite elements}
}

\begin{document}
\maketitle

\begin{abstract}
We study torsional rigidity as a function of the labeled vertices of a
convex polygon.  Starting from the distributed second shape derivative,
we derive the Hessian with respect to vertex coordinates.  At a regular
polygon, dihedral symmetry makes this matrix block circulant in
radial--tangential coordinates, reducing its spectrum to the eigenvalues
of Hermitian matrices of order two.  We also derive an exact
second-variation Galerkin identity and guaranteed functional residual
majorants.  Finite elements approximate the PDE solutions entering the
Hessian, and FLINT/Arb provides the interval arithmetic needed for
certification.  In the scale-invariant setting, we certify exactly four
zero eigenvalues generated by similarities and $2n-4$ strictly negative
eigenvalues for $5\leq n\leq25$.  The regular polygons in this range are
therefore strict local maximizers, modulo similarities, of torsional
rigidity divided by area squared.  We prove broken regularity of the
first and second material derivatives at each fixed regular polygon,
and deduce an $O(h^2)$ error bound for the exact Galerkin Hessian and
its eigenvalues on meshes fitted to the coarse fan.
\end{abstract}

\medskip
\noindent\textbf{Keywords.}
Torsional rigidity; regular polygon; second shape derivative; block
circulant matrix; validated numerics; finite elements; interval arithmetic.

\smallskip
\noindent\textbf{2020 Mathematics Subject Classification.}
49Q10, 49M25, 65N30, 65G20, 35J25.

\section{Introduction}

Let $\Omega\subset\R^2$ be bounded and let $u_\Omega\in H^1_0(\Omega)$
solve $-\Delta u_\Omega=1$.  We use
\[
 J(\Omega)=\frac12\int_\Omega u_\Omega
          =\frac12\int_\Omega|\nabla u_\Omega|^2
\]
throughout; torsional rigidity in the convention $-\Delta u=1$ is $2J$.
Since $J(t\Omega)=t^4J(\Omega)$, the quotient $J(\Omega)/|\Omega|^2$
is invariant under planar similarities.
The Saint--Venant inequality says that the disk uniquely maximizes this
quotient among planar domains \cite{Polya1948}.  P\'olya and Szeg\H{o}
conjectured the fixed-combinatorics analogue: among polygons with a given
number $n$ of sides and given area, torsional rigidity is maximized by the
regular $n$-gon \cite[p.~158]{PolyaSzego1951}.  The cases $n=3,4$ are
classical, whereas the unrestricted global problem remains open for every
$n\geq5$ \cite{GuiHuLi2026}.

Several results address restricted classes of polygons or provide
complementary bounds.  Sharp convex-domain estimates
give the conjecture under an additional quantitative asymmetry condition
\cite{FragalaGazzolaLamboley2013}, and variational symmetry results treat
polygon classes subject to further geometric constraints
\cite{BucurFragala2021}.  Bounds for tangential polygons and polynomial or
Bergman-space approximations provide complementary estimates and numerical
evidence \cite{Keady2021,FleemanSimanek2019,KrausSimanek2024}.  Recent
polytope computations give further nonvalidated evidence
\cite{DiazAvalosLaurain2026}, while flow methods establish monotonicity
results for triangles, rhombi, and rectangles \cite{HuangLiXieYang2025}.
Gui, Hu, Li, and Zhang proved that the torsional rigidity of equal-area
regular polygons increases strictly with $n$ and obtained its large-$n$
asymptotic expansion \cite{GuiHuLiZhang2026}.  Subsequently, Gui, Hu, and Li
proved the fixed-$n$ conjecture, with a quantitative deficit, within the
class of convex tangential polygons \cite{GuiHuLi2026}.  That theorem is
unrestricted for triangles because every triangle is tangential; it does
not settle the full polygon class when $n\geq4$.

Bogosel, Bucur, and Fragal\`a have now proved the local version for every
$n\geq5$: the regular $n$-gon is a strict local maximizer of torsional
rigidity among convex $n$-gons of fixed area
\cite{BogoselBucurFragala2026}.  Their proof is entirely analytic.
The interest of the present paper therefore lies in its numerical
methodology for proving local optimality in discrete geometric
optimization problems.  The combination of shape derivatives,
finite-element error identities, guaranteed residual bounds, and
interval arithmetic provides a reproducible way to certify a geometric
Hessian.  The torsion problem supplies a concrete application, with
explicit spectral enclosures and a proved quadratic Galerkin error rate.

Dahne, G\'omez-Serrano, and Pech-Alberich proved that the first Dirichlet
eigenvalue of equal-area regular polygons decreases strictly with $n$ for
all $n\geq3$, settling the Antunes--Freitas monotonicity conjecture
\cite{DahneGomezSerranoPechAlberich2026}.  Their computer-assisted proof
combines explicit bounds for the large-$n$ asymptotic remainder with
validated eigenvalue enclosures for the remaining finite range.  This
result complements the fixed-$n$ local shape questions considered here.

Our method follows the block-circulant strategy developed by Bogosel and
Bucur for the polygonal Faber--Krahn problem
\cite{BogoselBucur2024,BogoselBucurValidated2024}.  Laurain's distributed
second-order calculus supplies a Hessian with respect to polygonal vertex
parameters \cite{Laurain2020}.  At a regular polygon, dihedral symmetry and
a radial--tangential basis make this Hessian block circulant, and the
discrete Fourier transform reduces its spectrum to $2\times2$ Hermitian
symbols \cite{Tee2007}.  Bogosel and Bucur recorded the analogous torsion
formula in \cite[Remark~7.8]{BogoselBucur2024}; their interval computation in
\cite{BogoselBucurValidated2024} concerns the first Dirichlet eigenvalue,
not torsional rigidity.

For the finite-element discretization errors, the Bogosel--Bucur framework
uses explicit a priori estimates based on interpolation and regularity,
with interval arithmetic controlling the discrete computations.  The
resulting Hessian bounds have a leading $O(h)$ term
\cite[Section~5.2]{BogoselBucurValidated2024}.  This does not mean that the
underlying energy-norm approximation rate is suboptimal: the loss concerns
the estimates for the Hessian.  Here we combine computed equilibrated-flux
residuals with explicit stability bounds, and retain products of errors in
the second variation.  This connects the certificate to guaranteed error
estimation and to the combined a priori/a posteriori approach developed by
Liu and collaborators; see Section~\ref{sec:residual-principle}.  The
$O(h^2)$ Galerkin Hessian rate at each fixed regular polygon is proved in
Section~\ref{sec:rate}; the fixed-mesh certificates do not require an
asymptotic rate.

This paper makes four contributions.  First, we derive explicit formulas
for the three entries of every torsion symbol and isolate the Gram
contribution involving material derivatives.  Second, we express the sector integrals $X,Y,Z$ by
one mixed-torsion scalar and identify its contribution to the symbol.
Third, we derive a residual identity whose error terms are products of
energy errors, and prove the broken regularity needed for $O(h^2)$
Galerkin Hessian convergence at each fixed regular polygon.
Finally, we give a computer-assisted proof of strict local maximality for
$5\leq n\leq25$.  Exact fan reconstruction, continuous residual majorants,
and interval algebraic residuals enclose the continuous Hessian itself, not
only its finite-element discretization.  The cases $n=3,4$ are retained as
independent consistency checks.  All statements are local in the
finite-dimensional space of labeled, simple polygons near the regular
one.  The numerical certificates cover the stated finite range; the
analytic all-$n$ local theorem is due to
\cite{BogoselBucurFragala2026}.

\begin{theorem}[computer-assisted local maximality]\label{thm:n3-n10}
For every $n\in\{5,6,\ldots,25\}$, the Hessian of $J/A^2$ at the regular
$n$-gon, with respect to its $2n$ vertex coordinates, has exactly four zero
eigenvalues generated by similarities and $2n-4$ strictly negative
eigenvalues.  Consequently, the regular $n$-gon is a strict local maximizer
of torsional rigidity among nearby $n$-gons of the same area, modulo
translations and rotations.  Equivalently, it is a strict local maximizer
of $J/A^2$ modulo similarities.
\end{theorem}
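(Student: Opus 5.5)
The plan is to reduce the spectral statement to finitely many $2\times2$ Hermitian symbols, enclose their entries rigorously, and then invoke the second-order sufficient condition on the quotient manifold.

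\emph{Step 1: second derivative in vertex coordinates.} Write the vertices as $x=(x_1,\dots,x_n)\in\R^{2n}$. Let $V$ be the piecewise affine vector field on the coarse fan that moves vertex $i$ and fixes the centre. The first shape derivative of $J$ is a boundary integral of $\tfrac12|\nabla u_\Omega|^2\,V\cdot\nu$, and similarly for $A$. Laurain's distributed second-order calculus then expresses $D^2J(x)[\xi,\eta]$ as volume integrals over the fan. These integrals involve $u_\Omega$, the material derivatives $\dot u[\xi]$, and $DV$. The Hessian of $F=J/A^2$ follows from the quotient rule.

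\emph{Step 2: block-circulant reduction.} Use radial--tangential coordinates at each vertex of the regular $n$-gon. The dihedral group acts by cyclic shifts, so $D^2F$ is block circulant with $2\times2$ blocks $B_0,\dots,B_{n-1}$. Its spectrum is therefore the union over $k=0,\dots,n-1$ of the eigenvalues of the Hermitian symbols $S_k=\sum_j B_j e^{2\pi i jk/n}$. Reflection symmetry gives $S_k=S_{n-k}$ and fixes the form of the entries.

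Similarities account for four zero directions. Translations live in modes $k=\pm1$ and contribute two zeros. Rotation and dilation are the tangential and radial parts of mode $k=0$; dilation is a zero direction because $F$ is scale invariant. So it suffices to prove three things:
\begin{itemize}
\item $S_0$ is zero on the tangential (rotation) and radial (dilation) directions, so both of its eigenvalues vanish;
\item $S_{\pm1}$ each have one zero eigenvalue and one strictly negative eigenvalue;
\item $S_k$ is negative definite for $2\le k\le n/2$, that is, $\tr S_k<0$ and $\det S_k>0$.
\end{itemize}
The zeros can be proved exactly from invariance, not numerically. Counting then gives $2n-4$ strictly negative eigenvalues.

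\emph{Step 3: rigorous enclosure of the symbol entries.} The entries are explicit formulas in the sector integrals $X,Y,Z$ and one mixed-torsion scalar, together with Gram-type quantities $\int\nabla\dot u_k\cdot\nabla\overline{\dot u_k}$. I would compute these with finite elements on meshes fitted to the fan. I would then enclose the continuous values using two ingredients:
\begin{itemize}
\item the second-variation Galerkin identity, whose error terms are products of energy errors;
\item guaranteed equilibrated-flux residual majorants for $u$ and for each $\dot u_k$, with all algebraic and quadrature residuals evaluated in FLINT/Arb.
\end{itemize}
This produces interval boxes for $\tr S_k$ and $\det S_k$ for each $n\in\{5,\dots,25\}$. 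The zero modes are verified by checking that the computed enclosures are consistent with the exact zeros, and the remaining modes are certified sign-definite.

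\emph{Step 4: conclusion.} Assume the Hessian is negative definite on the complement of the tangent space of the similarity orbit. $F$ is smooth near the regular polygon, because the polygon stays convex and simple. The standard second-order sufficient condition on the quotient by similarities, which is a smooth slice transversal to the orbit, then gives a strict local maximum.

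The main obstacle is Step 3 for the modes $k$ close to $n/2$, and for small $k\ge2$ when $n$ is large. There $\det S_k$ is small relative to the size of its entries, so the residual majorants must be tight. Singular corner behaviour of $\dot u$ degrades the a posteriori bounds. If the first attempt fails, I would use graded refinement toward the vertices, together with product-of-errors bounds that exploit the $O(h^2)$ structure. The goal is that the interval width of $\det S_k$ falls below its computed value.
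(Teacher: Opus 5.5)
Your outline follows the paper's architecture: Laurain's distributed Hessian, the radial--tangential block-circulant reduction to $2\times2$ Hermitian symbols, exact similarity zeros, Arb-validated finite-element enclosures of the remaining entries, and the slice argument of Theorem~\ref{thm:conditional}. Two steps, however, are missing or would not work as written. First, you never show that $P_n$ is a critical point of $\F$. Both the second-order sufficient condition and your claim $S_0=0$ depend on it: differentiating $\F(g_t x)=\F(x)$ along a similarity generator $\xi$ gives only $D^2\F[\xi,\cdot]=-D\F\,D\xi$. This vanishes for dilations and rotations only when $D\F(P_n)=0$. The paper proves criticality before any computation. Dihedral symmetry makes $\nabla_{a_i}J$ radial, and Euler's identity for the degree-four function $J$ gives $\nabla_{a_i}J=(4J/n)a_i$, which yields \eqref{eq:critical-gradient}. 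Also, in modes $1$ and $n-1$ the nonzero eigenvalue is certified by the trace alone, since one eigenvalue is exactly zero and $\alpha_1=\beta_1=\gamma_1$. Checking that enclosures are ``consistent with'' the zeros plays no logical role.

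Second, your Step~3 does not yet yield a usable bound. Suppose the Gram quantities $\int\nabla\dot u_k\cdot\nabla\overline{\dot u_k}$ are enclosed separately, using majorants for $u$ and $\dot u_k$. Then the error is linear in the energy errors. Appendix~\ref{app:direct-bounds} shows that this gives an entry radius of about $0.089$ for $n=5$, $m=32$, against a smallest nonsimilarity eigenvalue of magnitude about $3.6\cdot10^{-3}$. The product structure holds only for a whole pairing $D^2J[q,r]$, compared with the second derivative $J_{h,qr}$ of the discrete energy under the fixed-mesh pullback. Even then, \eqref{eq:second-residual-identity} contains the term $\mathcal R^h_{qr}(e)$, which is only $O(\delta_0)$ unless you rewrite it as $a(Z^h_{qr}-z_h,e)$. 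You must then bound $\eta_{qr}$ by an equilibrated flux for the auxiliary lifting problem \eqref{eq:second-lifting}; the paper builds this flux explicitly from \eqref{eq:second-source-divergence}. Your list of majorants omits this lifting, and it is the key idea. Finally, corner singularities are not the obstacle. At $P_n$ the first material derivatives are broken $H^{2+\eta}$, with gradients vanishing at the vertices (Proposition~\ref{prop:broken-regularity}), and uniform fitted meshes with $m=32,64,128$ suffice. Note also that $S_{n-k}=\overline{S_k}$, not $S_k$; this does not affect the spectrum.
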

The proof and computational tables are given in
Section~\ref{sec:n3-n10}.  The certificates use finite error bounds on the
chosen meshes and do not rely on the convergence rate in
Theorem~\ref{prop:h2}.

\subsection{Disclosure of AI assistance}\label{sec:AIdisclosure}

AI assistants were used under the author's direction for algebraic
calculations, code development, numerical experiments, and drafting.
The author supplied the proof strategy, mathematical notes, references,
and independently developed FreeFEM codes used as numerical benchmarks.
The author reviewed the theoretical arguments and numerical code and
directed the revisions and verification work.  The numerical conclusions
are supported by the explicit error estimates and reproducible interval
checks described in this paper.  Appendix~\ref{app:AIhistory} records the
detailed development history; the prompts and the accompanying three
lectures are supplied in the code repository.

\section{Setting, normalization, and statement}

Let $P\subset\R^2$ be a bounded polygon and let $u=u_P\in H^1_0(P)$ solve
\begin{equation}\label{eq:torsion-state}
 -\Delta u=1\quad\hbox{in }P,\qquad u=0\quad\hbox{on }\partial P.
\end{equation}
We set
\begin{equation}\label{eq:J}
 J(P)=\frac12\int_P |\nabla u|^2
     =\frac12\int_P u.
\end{equation}
Thus torsional rigidity is $2J(P)$, as in the introduction.  If
$t>0$, then
$J(tP)=t^4J(P)$ and $|tP|=t^2|P|$.  The scale-invariant objective is
\begin{equation}\label{eq:F}
                 \F(P)=\frac{J(P)}{|P|^2}.
\end{equation}
The following finite-dimensional criterion gives the local maximality test.

\begin{theorem}[finite-dimensional local criterion]\label{thm:conditional}
Let $P_n$ be the regular $n$-gon.  If the Hessian of \eqref{eq:F}, with
respect to its $2n$ vertex coordinates, has four zero eigenvalues generated
by similarities and $2n-4$ strictly negative eigenvalues, then $P_n$ is a
strict local maximizer among labeled simple $n$-gons in a sufficiently
small vertex neighborhood, modulo translations, rotations, and scalings.
Equivalently, it is a strict local maximizer of $J$ among nearby $n$-gons of
the same area, modulo translations and rotations.
\end{theorem}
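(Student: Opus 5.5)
The plan is a Morse--Bott type argument on the finite-dimensional vertex space. Write $x\in\R^{2n}$ for the vertex coordinates, $x_0$ for those of $P_n$, and $f(x)=\F(P(x))$. The first step establishes that $f$ is $C^2$ near $x_0$. For labeled simple polygons close to $P_n$, we move the domain by a piecewise affine transformation $T_x$ on a fixed triangulation, such as the coarse fan. The state $u_x\circ T_x$ then solves a uniformly elliptic problem whose coefficients depend smoothly (in fact rationally) on $x$. The implicit function theorem in $H^1_0(P_n)$ gives smoothness of $x\mapsto J(P(x))$, and the area is a polynomial in $x$. Next, the regular polygon is a critical point of $f$. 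Because $f$ is invariant under rotations, we have $f(R x)=f(x)$ for the dihedral symmetry $R$, so $\nabla f(x_0)$ is fixed by the dihedral group. The only fixed vectors are the radial dilation direction and, for the reflection-free part, the tangential rotation direction. Scale and rotation invariance kill the derivative along both. Hence $\nabla f(x_0)=0$.

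The second step identifies the similarity orbit. Let $G$ be the group of planar similarities acting diagonally on vertices. The orbit $M=G\cdot x_0$ is a smooth $4$-dimensional submanifold, since translations, rotation and scaling act freely near $x_0$. We have $f\equiv f(x_0)$ on $M$, so $T_{x_0}M\subset\ker D^2f(x_0)$. The hypothesis that there are exactly four zero eigenvalues generated by similarities means $\ker D^2f(x_0)=T_{x_0}M$, and that $D^2f(x_0)$ is negative definite on the normal space $N=(T_{x_0}M)^\perp$, with some $-D^2f(x_0)|_N\ge 2c>0$.

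The third step is a slice argument. Consider $\Phi(g,y)=g\cdot(x_0+y)$ for $g\in G$ near the identity and $y\in N$ small. Its differential at $(e,0)$ is an isomorphism $T_eG\times N\to\R^{2n}$, so by the inverse function theorem every $x$ near $x_0$ can be written as $g\cdot(x_0+y)$. Then $f(x)=f(x_0+y)$, and Taylor's theorem together with $C^2$ continuity gives $f(x_0+y)\le f(x_0)-\tfrac c2|y|^2$ for small $y$. Equality therefore forces $y=0$, that is, $x\in M$. This is strict local maximality modulo similarities. The area-constrained formulation is equivalent, because one can rescale to area $|P_n|$ within $M$'s scaling direction, and on fixed area $f=J/|P_n|^2$.

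I expect the main obstacle to be the first step, namely the $C^2$ regularity of $J$ in the vertices and its validity uniformly near $x_0$, including the treatment of corners. I would handle it by transporting to the fixed reference polygon with piecewise-affine maps, so that only smooth dependence of the bilinear form's coefficients is needed; no corner regularity of $u$ is required. A minor point is checking that "zero eigenvalues generated by similarities" includes the exact equality $\ker=T_{x_0}M$, which I take as the meaning of the hypothesis.
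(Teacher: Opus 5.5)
Your proposal is correct and follows the paper's proof essentially step by step: the piecewise-affine fan pullback gives $C^2$ dependence on the vertices, dihedral symmetry together with similarity invariance gives criticality, and a slice transversal to the four-dimensional similarity orbit combined with Taylor's formula gives strict maximality, with the fixed-area version obtained by rescaling (the paper intersects with the area hypersurface instead). One small slip, which your argument does not rely on: constancy of $f$ on $M$ together with $\nabla f(x_0)=0$ does not by itself give $T_{x_0}M\subset\ker D^2f(x_0)$ (take $f(x,y)=xy$ and $M=\{y=0\}$); you need $\nabla f\equiv0$ along $M$, which follows from $G$-equivariance of $\nabla f$, but this inclusion is part of the hypothesis anyway and your slice argument uses only negative definiteness on $N$.
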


\begin{proof}
Use the piecewise-affine fan map from $P_n$ to a nearby labeled polygon.
On a sufficiently small neighborhood its pulled-back coefficient matrix is
uniformly elliptic and depends smoothly on the vertex coordinates.  The
Lax--Milgram solution operator, and hence $J$ and $\F$, are $C^2$ there.
Dihedral symmetry and homogeneity give $D\F(P_n)=0$ as shown below.  The
four-dimensional tangent space to the similarity orbit has a local
complementary slice.  Taylor's formula gives a strict local maximum on that slice
under the stated spectrum condition.  Every sufficiently close orbit
meets the slice, which proves the first assertion.  Intersecting with the
smooth fixed-area hypersurface removes the scale direction and proves the
equivalent constrained assertion.
\end{proof}

The following sections reduce the problem to finitely many $2\times2$
inequalities.  Section~\ref{sec:validated} develops the residual certificate,
Section~\ref{sec:pentagon} presents the pentagon example, and
Section~\ref{sec:n3-n10} reports the certificates for the larger polygons.
These provide an independent computational proof over a finite range,
complementing the analytic theorem of \cite{BogoselBucurFragala2026}.

\section{Vertex calculus and the torsion Hessian}

Write the counterclockwise vertices as $a_i=(x_i,y_i)$, with indices modulo
$n$, and put $z=(x_0,y_0,\ldots,x_{n-1},y_{n-1})$.  The area is
\begin{equation}\label{eq:area}
 A(z)=\frac12\sum_{i=0}^{n-1}(x_i y_{i+1}-y_i x_{i+1}).
\end{equation}
In particular,
\begin{equation}\label{eq:area-gradient}
 \nabla_{a_i}A=\frac12
 \begin{pmatrix}y_{i+1}-y_{i-1}\\x_{i-1}-x_{i+1}\end{pmatrix}.
\end{equation}
The only nonzero $2\times2$ off-diagonal blocks of $D^2A$ join neighboring
vertices.  In scalar coordinates,
\begin{equation}\label{eq:area-hessian}
 A_{x_i y_{i+1}}=A_{y_{i+1}x_i}=\frac12,\qquad
 A_{y_i x_{i+1}}=A_{x_{i+1}y_i}=-\frac12.
\end{equation}

Choose a triangulation of $P$ whose only boundary nodes are the polygon
vertices, and keep its interior nodes fixed.  Let $\phi_i$ be the
continuous, piecewise affine hat function which is one at $a_i$ and zero at
all other nodes.  A vertex displacement
$q=(q_0,\ldots,q_{n-1})$, $q_i\in\R^2$, is extended by
$\theta_q=\sum_i q_i\phi_i$.  The map $\Id+\theta_q$ is affine on every
triangle, fixes the interior nodes, and sends each $a_i$ to $a_i+q_i$;
for small $q$ it is a bi-Lipschitz map onto the displaced polygon.  This
also gives the smooth fixed-domain pullback used in the proof of
Theorem~\ref{thm:conditional}.  Define
$U_i=(U_i^1,U_i^2)\in H^1_0(P;\R^2)$ by
\begin{align}
 \int_P DU_i\nabla v
 ={}&\int_P-(\nabla u\cdot\nabla v)\nabla\phi_i
       +(\nabla\phi_i\cdot\nabla v)\nabla u\notag\\
 &\quad +(\nabla\phi_i\cdot\nabla u)\nabla v+v\nabla\phi_i,
 \qquad v\in H^1_0(P).                                      \label{eq:material}
\end{align}
Then the material derivative of the state in direction $q$ is
$u_q=\sum_iq_i\cdot U_i$.

For $f\equiv1$, Proposition~14 of Laurain \cite{Laurain2020} gives the
distributed second derivative.  We record its planar algebraic reduction.
Put $p=\nabla\phi_i$, $q=\nabla\phi_j$, $g=\nabla u$ and
\[
 S=\left(-\frac{|g|^2}{2}+u\right)I+g\otimes g.
\]
Apart from the material-derivative Gram term, Laurain's local integrand is
\begin{align*}
 &p\otimes(Sq)+(Sp)\otimes q
 +\left(\frac{|g|^2}{2}-u\right)(p\otimes q+q\otimes p)\\
 &\quad -(p\cdot g)q\otimes g-(q\cdot g)g\otimes p
 -(p\cdot q)g\otimes g.
\end{align*}
In two dimensions the identity
\begin{align*}
 &(q\cdot g)p\otimes g+(p\cdot g)g\otimes q
 -(p\cdot g)q\otimes g-(q\cdot g)g\otimes p\\
 &\hspace{35mm}=|g|^2(p\otimes q-q\otimes p)
\end{align*}
reduces this expression to the two local terms below.  Consequently the
$2\times2$ blocks $T_{ij}$ are
\begin{align}
T_{ij}={}&\int_P DU_iDU_j^T
 +\int_P\left(\frac12|\nabla u|^2+u\right)
  (\nabla\phi_i\otimes\nabla\phi_j-
   \nabla\phi_j\otimes\nabla\phi_i) \notag\\
&-\int_P(\nabla\phi_i\cdot\nabla\phi_j)
                  (\nabla u\otimes\nabla u).                 \label{eq:HJ}
\end{align}
Here
\[
 \left(\int_P DU_iDU_j^T\right)_{pq}
 =\int_P\nabla U_i^p\cdot\nabla U_j^q,
 \qquad p,q\in\{1,2\}.
\]
Thus $D^2J(z)=(T_{ij})_{i,j=0}^{n-1}$.  Although individual blocks need not
be symmetric, $T_{ji}=T_{ij}^T$, as required.

\subsection{The scale-invariant and additive Hessians}

Differentiating \eqref{eq:F} twice gives, at an arbitrary polygon,
\begin{align}
D^2\F={}&A^{-2}D^2J
-2A^{-3}(\nabla J\otimes\nabla A+\nabla A\otimes\nabla J)
-2JA^{-3}D^2A
+6JA^{-4}\nabla A\otimes\nabla A.                 \label{eq:HF-general}
\end{align}
At a critical point of $\F$,
\begin{equation}\label{eq:critical-gradient}
             \nabla J=\frac{2J}{A}\nabla A,
\end{equation}
and therefore
\begin{equation}\label{eq:HF-critical}
D^2\F=A^{-2}D^2J-2JA^{-3}D^2A
                    -2JA^{-4}\nabla A\otimes\nabla A.
\end{equation}

For the regular polygon of circumradius one, set
$\vartheta=2\pi/n$ and $a_i=(\cos i\vartheta,\sin i\vartheta)$.  Dihedral
symmetry makes $\nabla_{a_i}J$ radial, and Euler's identity for the
degree-four function $J$ gives $\nabla_{a_i}J=(4J/n)a_i$.  Since
$\nabla_{a_i}A=\sin\vartheta\,a_i$ and
$A=n\sin\vartheta/2$, identity \eqref{eq:critical-gradient} follows.  This
also proves criticality without a numerical PDE calculation.

On the tangent space $\nabla A\cdot q=0$, the sign of $D^2\F$ is the sign of
the additive Lagrangian Hessian
\begin{equation}\label{eq:additive}
 D^2\mathcal L=D^2J-\kappa D^2A,\qquad \kappa=\frac{2J}{A}.
\end{equation}
One must not count its scale direction as a negative constrained direction:
along $tP_n$, $\frac{\dd^2}{\dd t^2}[J(tP_n)-\kappa A(tP_n)]_{t=1}=8J>0$.
The scale-invariant Hessian instead has this direction in its kernel.

\subsection{Dihedral simplification on the regular fan}\label{sec:fan-explicit}

We now specify the coarse finite-element fan.  Let
\begin{equation}
 T_j=\operatorname{conv}\{0,a_j,a_{j+1}\},\qquad
 R_j=\begin{pmatrix}\cos(j\vartheta)&-\sin(j\vartheta)\\
                    \sin(j\vartheta)& \cos(j\vartheta)\end{pmatrix}.
\end{equation}
Let $\phi_j$ be the $P_1$ function on this fan that is one at $a_j$
and zero at the center and at every other polygon vertex.  Thus $\phi_j$ is
supported on $T_{j-1}\cup T_j$.  Up to values on the fan interfaces,
\begin{equation}\label{eq:fan-hat-gradient}
 \nabla\phi_j=\frac{1}{\sin\vartheta}\left\{
 \begin{aligned}
  &(\sin((j+1)\vartheta),-\cos((j+1)\vartheta))&&\text{on }T_j,\\
  &(-\sin((j-1)\vartheta),\cos((j-1)\vartheta))&&\text{on }T_{j-1},\\
  &(0,0)&&\text{elsewhere}.
 \end{aligned}\right.
\end{equation}
This is the piecewise-constant gradient formula used by Bogosel--Bucur.
It shows that the two local terms in \eqref{eq:HJ} vanish unless
$j-i\in\{-1,0,1\}$; the only potentially dense contribution to the Hessian is
$\int DU_iDU_j^T$.

The dihedral covariance is
\begin{equation}\label{eq:dihedral-covariance}
 u(R_jx)=u(x),\qquad
 \phi_j(x)=\phi_0(R_j^Tx),\qquad
 U_j(x)=R_jU_0(R_j^Tx).
\end{equation}
If $S=\operatorname{diag}(1,-1)$ is reflection in the line through $a_0$,
then $U_0(Sx)=S U_0(x)$.  In particular, $U_0^1$ is even and $U_0^2$ is
odd with respect to that reflection.  These identities follow either by
changing variables in \eqref{eq:material}, or by uniqueness of its weak
solution.

All local quantities can be reduced to one sector.  Put
\begin{equation}\label{eq:XYZQ}
 X=\int_{T_0}(\partial_x u)^2,\qquad
 Y=\int_{T_0}(\partial_y u)^2,\qquad
 Z=\int_{T_0}\partial_x u\,\partial_y u.
\end{equation}
Reflection in the bisector of $T_0$, followed by the sectorwise energy
identity, gives
\begin{equation}\label{eq:sector-identities}
 (Y-X)\sin\vartheta+2Z\cos\vartheta=0,\qquad
 X+Y=\int_{T_0}u=\frac{2J}{n}.
\end{equation}
For the energy identity, the radial sides of $T_0$ carry zero normal
derivative by reflection symmetry, while $u=0$ on its polygon side; hence
testing the equation with $u$ on $T_0$ produces no boundary contribution.  In particular,
\[
 \int_{T_0}\left(\frac12|\nabla u|^2+u\right)=\frac{3J}{n}.
\]

\begin{proposition}[one remaining sector scalar]\label{prop:XYZ-anisotropy}
Introduce the unit vectors along and across the bisector of $T_0$,
\begin{equation}
 e_b=(\cos(\vartheta/2),\sin(\vartheta/2)),\qquad
 e_t=(-\sin(\vartheta/2),\cos(\vartheta/2)).
\end{equation}
Set
\begin{equation}\label{eq:sector-anisotropy}
 P_b=\int_{T_0}(\partial_{e_b}u)^2,\qquad
 P_t=\int_{T_0}(\partial_{e_t}u)^2,\qquad
 \Delta_n=P_b-P_t.
\end{equation}
Then
\begin{equation}\label{eq:XYZ-delta}
 X=\frac Jn+\frac{\Delta_n}{2}\cos\vartheta,\qquad
 Y=\frac Jn-\frac{\Delta_n}{2}\cos\vartheta,\qquad
 Z=\frac{\Delta_n}{2}\sin\vartheta.
\end{equation}
Consequently, dihedral symmetry and the energy identity determine
$X,Y,Z$ up to the single anisotropy scalar $\Delta_n$.  In addition,
\begin{equation}\label{eq:delta-elementary-bound}
 |\Delta_n|<\frac{2J}{n},\qquad
 |Z|<\frac{J\sin\vartheta}{n}.
\end{equation}
\end{proposition}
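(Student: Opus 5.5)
The plan is to change to the rotated orthonormal frame $(e_b,e_t)$ and express the quadratic form $\nabla u\otimes\nabla u$ integrated over $T_0$ in that frame. Introduce the sector matrix
\[
 M=\int_{T_0}\nabla u\otimes\nabla u=\begin{pmatrix}X&Z\\ Z&Y\end{pmatrix}.
\]
Its entries in the bisector frame are $e_b^TMe_b=P_b$, $e_t^TMe_t=P_t$ and $e_b^TMe_t$. Reflection in the bisector of $T_0$ maps $T_0$ to itself and leaves $u$ invariant. It fixes $e_b$ and reverses $e_t$, so $\partial_{e_b}u$ is even and $\partial_{e_t}u$ is odd under this reflection. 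Hence $\int_{T_0}\partial_{e_b}u\,\partial_{e_t}u=0$, and $M$ is diagonal in the frame $(e_b,e_t)$:
\[
 M=P_b\,e_b\otimes e_b+P_t\,e_t\otimes e_t .
\]

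Next we read off the Cartesian entries using $e_b\otimes e_b=\tfrac12(I+R)$ and $e_t\otimes e_t=\tfrac12(I-R)$, where
\[
 R=\begin{pmatrix}\cos\vartheta&\sin\vartheta\\ \sin\vartheta&-\cos\vartheta\end{pmatrix}
\]
(double-angle formulas with angle $\vartheta/2$). This gives
\[
 M=\tfrac12(P_b+P_t)I+\tfrac12\Delta_n R .
\]
The trace is $P_b+P_t=X+Y=2J/n$ by the energy identity in \eqref{eq:sector-identities}, so $\tfrac12(P_b+P_t)=J/n$. Reading off the entries yields exactly \eqref{eq:XYZ-delta}. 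As a consistency check, these formulas satisfy the first identity in \eqref{eq:sector-identities} automatically: $(Y-X)\sin\vartheta+2Z\cos\vartheta=-\Delta_n\cos\vartheta\sin\vartheta+\Delta_n\sin\vartheta\cos\vartheta=0$. This shows that the reflection identity and the diagonalization carry the same information.

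For \eqref{eq:delta-elementary-bound}, note that $P_b,P_t\ge0$ and $P_b+P_t=2J/n$, so $|\Delta_n|\le 2J/n$, with equality only if one of $P_b,P_t$ vanishes. The main point is therefore strictness, i.e. showing $P_b>0$ and $P_t>0$. If $P_t=0$, then $\partial_{e_t}u\equiv0$ on $T_0$. Since $u$ is real analytic in the interior, $u$ would be constant along every line parallel to $e_t$ in $T_0$. But $u>0$ inside and $u=0$ on the polygon side, which is itself parallel to $e_t$, while lines parallel to $e_t$ near the radial sides reach the boundary where $u=0$; this is a contradiction. Symmetrically, if $P_b=0$, then $u$ would be constant along lines parallel to $e_b$, which hit the polygon side where $u=0$, again forcing $u\equiv0$, impossible. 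An alternative is the strong maximum principle: $u$ is positive and nonconstant on every segment joining an interior point to $\partial P$. Finally, $|Z|=\tfrac12|\Delta_n|\sin\vartheta<J\sin\vartheta/n$ for $n\ge3$, since $\sin\vartheta>0$.

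The only step needing care is this strictness argument; the rest is linear algebra in the rotated frame.
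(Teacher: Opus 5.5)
Your derivation of \eqref{eq:XYZ-delta} follows the paper's argument. Reflection in the bisector diagonalizes $\int_{T_0}\nabla u\otimes\nabla u$ in the frame $(e_b,e_t)$. Rotating back through $\vartheta/2$ gives the Cartesian entries, and the trace equals $2J/n$. Your proof that $P_b>0$ is also correct, because every segment of $T_0$ parallel to $e_b$ ends on the polygon side. The paper only asserts $P_b,P_t>0$, and your justification of $P_t>0$, which you rightly single out as the delicate step, contains a false claim.

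The segments of $T_0$ parallel to $e_t$ run from the radial side $[0,a_0]$ to the radial side $[0,a_1]$. Apart from the vertices $a_0,a_1$, both radial sides lie in the interior of $P_n$, where $u>0$. The only such segment on $\partial P_n$ is the polygon side itself. So they do not ``reach the boundary where $u=0$''. Inside $T_0$, the hypothesis $\partial_{e_t}u\equiv0$ only says $u=f(x\cdot e_b)$. The profile $f(s)=\frac12(\cos^2(\vartheta/2)-s^2)$ is positive in the interior of $T_0$, vanishes on the polygon side, and satisfies $-\Delta u=1$. Hence no contradiction can be drawn from information in $T_0$ alone, and your maximum-principle alternative fails for the same reason.

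You need one global ingredient, and either of the following repairs works.
\begin{itemize}
\item Unique continuation. $\partial_{e_t}u$ is harmonic in the interior of $P_n$ and vanishes on the open set $\operatorname{int}T_0$, so it vanishes on all of $\operatorname{int}P_n$. This is where analyticity is genuinely needed. Then $u$ is constant on every chord of the convex polygon parallel to $e_t$. The endpoints of each chord lie on $\partial P_n$, so $u\equiv0$, a contradiction.
\item Reflection symmetry. Reflection in the line through $0$ and $a_0$ gives $\partial_y u=0$ on $[0,a_0)$, which lies in the interior of $P_n$. By continuity, $0=\partial_{e_t}u=-\sin(\vartheta/2)\,\partial_x u$ there. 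So $\nabla u$ vanishes along $[0,a_0)$, and $u(0)=u(a_0)=0$, contradicting $u(0)>0$.
\end{itemize}
With either repair, \eqref{eq:delta-elementary-bound} follows as you wrote.
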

\begin{proof}
Reflection in the bisector diagonalizes
$\int_{T_0}\nabla u\otimes\nabla u$ in the basis $(e_b,e_t)$.  Rotating
$\operatorname{diag}(P_b,P_t)$ back through the angle $\vartheta/2$ and using
$P_b+P_t=X+Y=2J/n$ gives \eqref{eq:XYZ-delta}.  The bounds follow from
$P_b,P_t>0$.
\end{proof}

There is an exact analytic characterization of the remaining scalar, but it
still contains mixed-torsion information.  Put $q=\cot(\vartheta/2)$ and consider
\begin{equation}
 \Omega_q=\{(\xi,\eta):0<\xi<q,\ 0<\eta<1-\xi/q\}.
\end{equation}
Let $u_q$ solve $-\Delta u_q=1$, with Dirichlet condition on $\xi=0$ and
Neumann condition on the other two sides, and set
\begin{equation}
 \tau(q)=\int_{\Omega_q}u_q,\qquad
 E_\xi(q)=\int_{\Omega_q}(\partial_\xi u_q)^2,\qquad
 E_\eta(q)=\int_{\Omega_q}(\partial_\eta u_q)^2.
\end{equation}
Splitting $T_0$ along its bisector gives two congruent right triangles.
After a rigid motion, either half is $\sin(\vartheta/2)\,\Omega_q$: its two legs
have lengths $\sin(\vartheta/2)$ and $\cos(\vartheta/2)=q\sin(\vartheta/2)$.  The polygon side is
the Dirichlet leg, while reflection supplies Neumann conditions on the
bisector and radial side.  Under a dilation by $\lambda$, the state and its
integral scale by $\lambda^2$ and $\lambda^4$, respectively.  Resolving the
two gradient components along the legs therefore gives
\begin{equation}\label{eq:mixed-cell-scaling}
 \frac Jn=\sin^4(\vartheta/2)\,\tau(q),\qquad
 \Delta_n=2\sin^4(\vartheta/2)\,[E_\xi(q)-E_\eta(q)].
\end{equation}
The Hadamard--Pohozaev identity of Gui--Hu--Li--Zhang
\cite[Theorem~1.1]{GuiHuLiZhang2026} states
\begin{equation}\label{eq:mixed-cell-Hadamard}
 \frac{\dd}{\dd q}\left(\frac{\tau(q)}{q^2}\right)
 =\frac{E_\xi(q)-E_\eta(q)}{q^3}>0.
\end{equation}
It follows that
\begin{equation}\label{eq:XYZ-mixed-cell}
\begin{aligned}
 \Delta_n&=2\sin^4(\vartheta/2)\,[q\tau'(q)-2\tau(q)]>0,\\
 X&=\sin^4(\vartheta/2)\{
       \tau(q)+\cos\vartheta[q\tau'(q)-2\tau(q)]\},\\
 Y&=\sin^4(\vartheta/2)\{
       \tau(q)-\cos\vartheta[q\tau'(q)-2\tau(q)]\},\\
 Z&=\sin^4(\vartheta/2)\sin\vartheta[q\tau'(q)-2\tau(q)].
\end{aligned}
\end{equation}
Thus \eqref{eq:XYZ-mixed-cell} is an analytic formula for $X,Y,Z$, and it
also determines the sign of their anisotropy.  In particular, $Z>0$;
$X<Y$ for $n=3$, $X=Y$ for $n=4$, and $X>Y$ for $n\geq5$.  The formula is
analytic but retains the mixed-cell torsion function $\tau$ and its
derivative; no elementary closed form is asserted.

\begin{example}[equilateral triangle]\label{ex:XYZ-triangle}
For $n=3$ and circumradius one, the torsion function is the polynomial
\begin{equation}
 u(x,y)=\frac16\left(x+\frac12\right)
        \big((1-x)^2-3y^2\big).
\end{equation}
Direct integration over
$T_0=\operatorname{conv}\{0,(1,0),(-1/2,\sqrt3/2)\}$ gives
\begin{equation}\label{eq:XYZ-triangle}
 X=\frac{\sqrt3}{384},\qquad
 Y=\frac{13\sqrt3}{1920},\qquad
 Z=\frac1{160},\qquad
 J=\frac{9\sqrt3}{640},\qquad
 \Delta_3=\frac{\sqrt3}{120}.
\end{equation}
For $n=4$, symmetry gives $X=Y=J/4$ and $Z=\Delta_4/2$.
\end{example}

Let
\begin{equation}
 \mathbf J=\begin{pmatrix}0&-1\\1&0\end{pmatrix},\quad
 M_+=\begin{pmatrix}X&Z\\Z&Y\end{pmatrix},\quad
 M_-=\begin{pmatrix}X&-Z\\-Z&Y\end{pmatrix}.
\end{equation}

\begin{lemma}[the three local blocks]\label{lem:local-blocks}
The first block row of the two local terms in \eqref{eq:HJ}, before the
radial--tangential change of basis, is
\begin{align}
 L_0&=-\frac{2}{\sin^2\vartheta}\begin{pmatrix}X&0\\0&Y\end{pmatrix},\notag\\
 L_1&=\frac{\cos\vartheta}{\sin^2\vartheta}M_+-\frac{3J}{n\sin\vartheta}\mathbf J,\notag\\
 L_{-1}&=\frac{\cos\vartheta}{\sin^2\vartheta}M_-+\frac{3J}{n\sin\vartheta}\mathbf J,              \label{eq:local-three-blocks}
\end{align}
and $L_j=0$ otherwise.  Formula \eqref{eq:local-three-blocks} gives an
explicit simplification of \eqref{eq:HJ}: it contains no auxiliary PDE
solution.
\end{lemma}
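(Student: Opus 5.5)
The plan is to compute the local part of \eqref{eq:HJ} directly for $i=0$, using the explicit piecewise-constant gradients \eqref{eq:fan-hat-gradient}. The two local integrands are products of $\nabla\phi_0$ and $\nabla\phi_j$, so they are supported on $\operatorname{supp}\phi_0\cap\operatorname{supp}\phi_j$. This set has positive measure only for $j\in\{-1,0,1\}$, which gives $L_j=0$ otherwise. On each triangle the gradients are constant vectors, so every block reduces to a constant matrix times one of three sector integrals:
\[
\int_{T_k}\nabla u\otimes\nabla u,\qquad \int_{T_k}\Big(\tfrac12|\nabla u|^2+u\Big),\qquad k\in\{-1,0\}.
\]
By \eqref{eq:dihedral-covariance} and the sector identities, the second integral equals $3J/n$. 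For the first, we set $\int_{T_0}\nabla u\otimes\nabla u=M_+$. The integral over $T_{-1}$ is obtained from the reflection $S$, which maps $T_{-1}$ to $T_0$ and $u$ to itself; it gives $\int_{T_{-1}}\nabla u\otimes\nabla u=SM_+S=M_-$.

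\textbf{Diagonal block.} For $j=0$ the antisymmetric term $\nabla\phi_0\otimes\nabla\phi_0-\nabla\phi_0\otimes\nabla\phi_0$ vanishes. It remains to compute $-|\nabla\phi_0|^2\int\nabla u\otimes\nabla u$ over $T_0\cup T_{-1}$. From \eqref{eq:fan-hat-gradient} with $j=0$, $|\nabla\phi_0|^2=1/\sin^2\vartheta$ on both triangles, so the block is $-(M_++M_-)/\sin^2\vartheta=-2\operatorname{diag}(X,Y)/\sin^2\vartheta$. This is $L_0$.

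\textbf{Off-diagonal blocks.} For $j=1$ the common support is $T_0$. There $\nabla\phi_0=(\sin\vartheta,-\cos\vartheta)/\sin\vartheta$, and $\nabla\phi_1$ is the $T_{j-1}$ branch with $j=1$, namely $(0,1)/\sin\vartheta$. Hence
\[
\nabla\phi_0\cdot\nabla\phi_1=-\frac{\cos\vartheta}{\sin^2\vartheta},
\]
and the third term of \eqref{eq:HJ} contributes $\cos\vartheta\,M_+/\sin^2\vartheta$. For the antisymmetric term, note that $p\otimes q-q\otimes p=(p_1q_2-p_2q_1)(-\mathbf J)$ in the paper's convention $\mathbf J=\begin{pmatrix}0&-1\\1&0\end{pmatrix}$. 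The cross product here is $\sin\vartheta/\sin^2\vartheta=1/\sin\vartheta$, so this term contributes $-(3J/n)\mathbf J/\sin\vartheta$, matching $L_1$. For $j=-1$ we can repeat the computation on $T_{-1}$. Alternatively, $L_{-1}$ follows from $L_{-1}=R_{-1}L_1^TR_{-1}^T$-type covariance combined with $T_{ji}=T_{ij}^T$, but the direct computation is simpler and gives $M_-$ and the opposite sign of the cross product.

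\textbf{Main obstacle.} The delicate step is bookkeeping rather than analysis. One must get the orientation and sign conventions right: which branch of \eqref{eq:fan-hat-gradient} applies on which triangle, the sign of $p\otimes q-q\otimes p$ relative to $\mathbf J$, and the identification of the $T_{-1}$ integral as $M_-$ rather than $M_+$. I would check all three by evaluating the formula on the equilateral triangle of Example~\ref{ex:XYZ-triangle} and confirming the symmetry $L_{-1}=L_1^T$ after the rotation that relates the block rows.
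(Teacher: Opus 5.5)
Your proposal is correct and follows the paper's own proof. You substitute the piecewise-constant hat gradients on $T_0$ and $T_{-1}$, use the reflection $S$ to get $\int_{T_{-1}}\nabla u\otimes\nabla u=M_-$, take the coefficient $3J/n$ from the sector energy identity, and get the vanishing blocks from disjoint supports. Your explicit sign bookkeeping, $p\otimes q-q\otimes p=(p_1q_2-p_2q_1)(-\mathbf J)$ with cross products $\pm1/\sin\vartheta$ on $T_0$ and $T_{-1}$, spells out exactly the substitution the paper leaves to the reader.
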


\begin{proof}
On the only two relevant sectors, \eqref{eq:fan-hat-gradient} reads
\[
 \begin{array}{c|cc}
 &T_0&T_{-1}\\ \hline
 \nabla\phi_0&(1,-\cot\vartheta)&(1,\cot\vartheta)\\
 \nabla\phi_1 &(0,1/\sin\vartheta)&0\\
 \nabla\phi_{-1}&0&(0,-1/\sin\vartheta).
 \end{array}
\]
Reflection in the $x$ axis changes the sector matrix $M_+$ to $M_-$.
The sector energy identity gives the coefficient $3J/n$ in the skew term.
Substitution of the displayed hat gradients into the last two terms of
\eqref{eq:HJ} gives \eqref{eq:local-three-blocks}.  Disjoint supports give
all other zero blocks.
\end{proof}

For completeness, the area terms in \eqref{eq:HF-critical} are just as
explicit.  In radial--tangential coordinates their Fourier symbol is
\begin{equation}\label{eq:area-symbol-explicit}
 \mathcal A_k=\sin\vartheta\cos(k\vartheta)I
              -i \cos\vartheta\sin(k\vartheta)\mathbf J,
\end{equation}
where $\rho_k=\exp(ik\vartheta)$.  Moreover, $\nabla A$ has the constant
radial coordinates $(\sin\vartheta,0)$ at every
vertex.  The Fourier transform of \eqref{eq:local-three-blocks} is
\begin{equation}\label{eq:local-symbol-expanded}
 \sum_{j=-1}^1\rho_k^jL_jR_j
 =\begin{pmatrix}\ell_{x,k}&i\eta_k\\-i\eta_k&\ell_{y,k}\end{pmatrix},
\end{equation}
where
\begin{align}
 \ell_{x,k}&=-\frac{2X}{\sin^2\vartheta}
 +\frac{2\cos\vartheta\cos(k\vartheta)}{\sin^2\vartheta}(X\cos\vartheta+Z\sin\vartheta)+\frac{6J}{n}\cos(k\vartheta),\notag\\
 \ell_{y,k}&=-\frac{2Y}{\sin^2\vartheta}
 +\frac{2\cos\vartheta\cos(k\vartheta)}{\sin^2\vartheta}(Y\cos\vartheta-Z\sin\vartheta)+\frac{6J}{n}\cos(k\vartheta),\label{eq:local-symbol-entries}\\
 \eta_k&=2\cos\vartheta\sin(k\vartheta)\left(\frac{-X\sin\vartheta+Z\cos\vartheta}{\sin^2\vartheta}+\frac{3J}{n\sin\vartheta}\right)
       \notag\\
       &=\frac{4J\cos\vartheta}{n\sin\vartheta}\sin(k\vartheta).\notag
\end{align}
The last equality uses \eqref{eq:sector-identities}; in particular
$-X\sin\vartheta+Z\cos\vartheta=-J\sin\vartheta/n$.  Combining \eqref{eq:local-symbol-entries} and
\eqref{eq:area-symbol-explicit}, and using $A=n\sin\vartheta/2$, produces the exact
cancellation
\begin{equation}\label{eq:explicit-cancellation}
 A^{-2}\sum_{j=-1}^1\rho_k^j L_jR_j
 -2JA^{-3}\mathcal A_k
 =-\frac{2(1-\cos(k\vartheta))}{A^2\sin^2\vartheta}
       \begin{pmatrix}X&0\\0&Y\end{pmatrix},\qquad 0\leq k<n.
\end{equation}
In particular, the explicit off-diagonal term cancels identically.  The
rank-one last term of \eqref{eq:HF-critical} occurs only in mode zero.  On
the area tangent space, multiplying \eqref{eq:explicit-cancellation} by
$A^2$ gives the corresponding formula for the additive Hessian
$D^2J-(2J/A)D^2A$.

\section{Block-circulant Fourier reduction}

With the rotations introduced above, let
\begin{equation}
 \mathsf R=\operatorname{diag}(R_0,\ldots,R_{n-1}).
\end{equation}
The columns of $R_i$ are the radial and tangential directions at $a_i$.
Put
\begin{equation}
                         H=\mathsf R^T(D^2\F)\mathsf R.                       \label{eq:Hrad}
\end{equation}

\begin{proposition}[block-circulant structure]\label{prop:block-circ}
At $P_n$, the matrix $H$ has $2\times2$ blocks
\begin{equation}\label{eq:block-circ}
 H_{ij}=C_{j-i},\qquad C_{-j}=C_j^T,
\end{equation}
where indices are modulo $n$.
\end{proposition}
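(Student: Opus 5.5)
The plan is to deduce \eqref{eq:block-circ} from rotational covariance of every ingredient in \eqref{eq:HF-critical}, and to take $C_{-j}=C_j^T$ from the symmetry of $D^2\F$. Write $H_{ij}=R_i^T(D^2\F)_{ij}R_j$, where $(D^2\F)_{ij}$ is the $2\times2$ block pairing $a_i$ with $a_j$. The claim $H_{ij}=C_{j-i}$ then reduces to showing that $H_{i+1,j+1}=H_{ij}$ for all $i,j$ modulo $n$. By \eqref{eq:HF-critical}, at $P_n$ the matrix $D^2\F$ is a linear combination, with constant coefficients, of $D^2J$, $D^2A$ and $\nabla A\otimes\nabla A$. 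It therefore suffices to prove the equivariance
\[
 M_{i+1,j+1}=R_1M_{ij}R_1^T
\]
for each of these three matrices $M$. Indeed, $R_{i+1}=R_1R_i$ then gives $R_{i+1}^TM_{i+1,j+1}R_{j+1}=R_i^TM_{ij}R_j$.

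For $\nabla A\otimes\nabla A$ this is immediate, since $\nabla_{a_i}A=\sin\vartheta\,a_i=R_i(\sin\vartheta,0)^T$. Hence its radial--tangential blocks are all equal to $\sin^2\vartheta\,e_1\otimes e_1$.

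For $D^2A$, \eqref{eq:area-hessian} shows that the blocks are $\frac12(E-E^T)$-type constant blocks on neighbouring pairs. In block form, $(D^2A)_{i,i+1}=-\tfrac12\mathbf J$ with the appropriate sign convention, while the diagonal blocks vanish. Since $\mathbf J$ commutes with every rotation, $R_1\mathbf JR_1^T=\mathbf J$, and the equivariance holds. A cleaner intrinsic argument also works: $A$ is invariant under the simultaneous rotation $z\mapsto\mathsf R_1 z$ (all vertices rotated by $R_1$), and also under the relabeling $a_i\mapsto a_{i+1}$. The same argument covers $J$.

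For $D^2J$, the main step uses the formula \eqref{eq:HJ} and the covariance \eqref{eq:dihedral-covariance}. We have $u\circ R_1=u$, $\phi_{j+1}(x)=\phi_j(R_1^Tx)$, so $\nabla\phi_{j+1}(x)=R_1\nabla\phi_j(R_1^Tx)$, and $U_{j+1}(x)=R_1U_j(R_1^Tx)$. The last identity follows from $U_j(x)=R_jU_0(R_j^Tx)$, which gives $DU_{j+1}(x)=R_1DU_j(R_1^Tx)R_1^T$.

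Substituting into each of the three integrals of \eqref{eq:HJ} and changing variables $x=R_1y$, which preserves $P_n$ and Lebesgue measure, gives the following.
\begin{itemize}
\item The Gram term becomes $R_1\left(\int DU_iDU_j^T\right)R_1^T$.
\item The term $\nabla\phi_i\otimes\nabla\phi_j-\nabla\phi_j\otimes\nabla\phi_i$ transforms by conjugation, and its scalar weight is invariant.
\item The term $(\nabla\phi_i\cdot\nabla\phi_j)\,\nabla u\otimes\nabla u$ transforms by conjugation because $\nabla u(R_1y)=R_1\nabla u(y)$.
\end{itemize}
Hence $T_{i+1,j+1}=R_1T_{ij}R_1^T$.

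Finally, $C_{-j}=C_j^T$ follows from the symmetry of $H$: $H_{ji}=H_{ij}^T$ gives $C_{i-j}=C_{j-i}^T$. This is consistent with $T_{ji}=T_{ij}^T$.

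I expect the main obstacle to be justifying the covariance $U_{j}(x)=R_jU_0(R_j^Tx)$. The plan is to change variables in \eqref{eq:material} with test function $v\circ R_j^T$. Invariance of $u$ and the covariance of $\nabla\phi_j$ then show that the right-hand side defines the same functional, and uniqueness of the Lax--Milgram solution concludes. The rest is bookkeeping with indices modulo $n$.
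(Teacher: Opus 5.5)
Your argument is correct, but your main route is more computational than the paper's. The paper argues intrinsically. Rotating every vertex by $R_1$ and shifting the labels by one leaves $J$ and $A$, hence $\F$, invariant and maps $P_n$ to itself. Differentiating this invariance twice at $P_n$ gives $(D^2\F)_{i+1,j+1}=R_1(D^2\F)_{ij}R_1^T$ at once. Conjugation by $\mathsf R$ then leaves invariance under a cyclic shift, and symmetry of the Hessian gives $C_{-j}=C_j^T$. This is the ``cleaner intrinsic argument'' you mention in passing; it is the composition of rotation and relabeling that fixes $P_n$.

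Your main line instead splits $D^2\F$ via \eqref{eq:HF-critical} and checks the equivariance separately for $\nabla A\otimes\nabla A$, $D^2A$ and $D^2J$, the last via \eqref{eq:HJ} and the covariance \eqref{eq:dihedral-covariance}. Each of these computations is right: the blocks $\sin^2\vartheta\, e_1\otimes e_1$, the neighbour blocks $\mp\tfrac12\mathbf J$ commuting with rotations, and the three conjugation identities after the change of variables $x=R_1y$.

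The trade-off is as follows.
\begin{itemize}
\item The intrinsic argument is shorter. It needs neither criticality, nor Laurain's formula, nor the covariance of the material derivatives $U_j$. The paper already records that covariance with its justification, so you need not reprove it.
\item The intrinsic argument also applies verbatim to any rotation- and relabeling-invariant function of the vertices, such as $J_h$ on a symmetric mesh.
\item Your term-by-term route uses more machinery but shows more: the Gram matrix $\bigl(\int_P DU_iDU_j^T\bigr)_{ij}$, the two local terms and the area terms are each block circulant on their own.
\item That fits how the paper later handles the Gram sums \eqref{eq:G-three} and the explicit symbol \eqref{eq:explicit-cancellation} separately, and it doubles as a consistency check of \eqref{eq:HJ} against the symmetry.
\end{itemize}
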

\begin{proof}
Simultaneously rotating the polygon, the state, and a vertex perturbation by
$\vartheta$ leaves $J$ and $A$ invariant.  In Euclidean vertex coordinates
this conjugates both block indices and both vector components by the same
rotation.  The matrices $R_i$ in \eqref{eq:Hrad} remove that component
rotation, leaving invariance under a cyclic shift of the block indices.
Symmetry of the real Hessian gives the second identity.
\end{proof}

The following Fourier reduction of a block-circulant matrix is due to
Tee~\cite{Tee2007}.  For $k=0,\ldots,n-1$ define the Fourier symbol
\begin{equation}\label{eq:symbol}
 B_k=\sum_{j=0}^{n-1}\rho_k^j C_j.
\end{equation}
By \eqref{eq:block-circ}, $B_k$ is Hermitian.  Reflection symmetry further restricts
its form to
\begin{equation}\label{eq:symbol-entries}
 B_k=\begin{pmatrix}\alpha_k&i\gamma_k\\
                    -i\gamma_k&\beta_k\end{pmatrix},
 \qquad \alpha_k,\beta_k,\gamma_k\in\R.
\end{equation}
Its eigenvalues are
\begin{equation}\label{eq:mode-eigs}
 \mu_k^\pm=\frac12\left(\alpha_k+\beta_k
 \pm\sqrt{(\alpha_k-\beta_k)^2+4\gamma_k^2}\right).
\end{equation}

\begin{proposition}[spectrum and symmetry modes]\label{prop:modes}
The multiset of the $2n$ eigenvalues of $D^2\F(P_n)$ is
$\{\mu_k^-,\mu_k^+:0\leq k<n\}$.  Moreover:
\begin{enumerate}
\item $B_0=0$.  Its radial and tangential vectors generate scaling and
rotation.
\item Each of $B_1$ and $B_{n-1}$ has at least one zero eigenvalue.  The two
corresponding real directions generate the translations.
\end{enumerate}
Consequently, proving strict negativity of every other branch gives
exactly $2n-4$ negative eigenvalues and proves that these four symmetry
directions exhaust the kernel.
\end{proposition}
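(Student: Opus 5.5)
The plan is to first establish the spectral statement through the standard block-circulant diagonalization, and then treat the two symmetry items by differentiating the invariances of $\F$.

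\textbf{Spectral decomposition.} Let $\Phi_k=(\rho_k^{-i}\,w)_{i=0}^{n-1}$ with $w\in\mathbb C^2$. By Proposition~\ref{prop:block-circ}, $H_{ij}=C_{j-i}$. Hence
\[
(H\Phi_k)_i=\sum_j C_{j-i}\rho_k^{-j}w=\rho_k^{-i}\sum_m \rho_k^{-m}C_m w .
\]
This exhibits a $2$-dimensional invariant subspace for each $k$, on which $H$ acts by a symbol conjugate to $B_k$. Because $C_{-m}=C_m^T$ and the $C_m$ are real, $\sum_m\rho_k^{-m}C_m=B_k^T=\overline{B_k}$. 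Thus it has the same (real) eigenvalues as $B_k$, and the sign convention is harmless.

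The $n$ subspaces are mutually orthogonal and together span $\mathbb C^{2n}$. The spectrum of $H$ is therefore the union of the spectra of the $B_k$. Since $\mathsf R$ is orthogonal, $H$ and $D^2\F$ share their spectrum, and the real eigenvalues are counted with multiplicity.

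\textbf{Item 1: $B_0=0$.} The invariance group of $\F$ (similarities) gives kernel vectors of $D^2\F$ at a critical point. Indeed, if $\F(g_t P)=\F(P)$ for a one-parameter family with $g_0=\Id$, differentiating $\nabla\F(g_tP)\cdot$ at the critical point $P_n$ shows that the generating vector $\dot z$ lies in $\ker D^2\F(P_n)$.

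The scaling generator has vertex components $a_i$, i.e. the radial unit vector. The rotation generator has components $\mathbf J a_i$, i.e. the tangential one. In $\mathsf R$-coordinates both are constant across $i$, so they are $k=0$ Fourier vectors with $w=(1,0)$ and $w=(0,1)$. Therefore $B_0$ annihilates both basis vectors, and $B_0=0$.

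\textbf{Item 2: translations.} A translation by $e\in\R^2$ has $\mathsf R^T$-coordinates $R_i^Te$. Writing $e=(\cos\psi,\sin\psi)$ gives $R_i^Te=(\cos(\psi-i\vartheta),\sin(\psi-i\vartheta))$. This is a combination of $\rho^{\pm i}$ times the fixed vectors $(1,\mp i)$, i.e. a sum of a mode-$1$ and a mode-$(n-1)$ Fourier vector.

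Since $H$ preserves each mode subspace, both components lie in the kernel. Hence $B_1$ and $B_{n-1}$ each have a zero eigenvalue. Taking real parts of the complex conjugate pair recovers the two real translation directions.

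\textbf{Consequence.} The branches already known to vanish are both branches of $B_0$ and one branch each of $B_1$ and $B_{n-1}$: four eigenvalues in total. If every remaining branch is strictly negative, the remaining $2n-4$ eigenvalues are negative and the kernel is exactly four-dimensional. It is then spanned by the four similarity generators, which are linearly independent for $n\ge3$.

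The main point of care is the handling of $B_1$ versus $B_{n-1}$ when $n$ is small. For $n\ge3$ the indices $1$ and $n-1$ are distinct modes, so there is no double counting. Conjugation symmetry, $B_{n-k}=\overline{B_k}$, shows that their zero eigenvalues come as a matched pair.
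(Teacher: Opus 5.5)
Your proposal is correct and follows essentially the same route as the paper's proof: Fourier block-diagonalization of the block-circulant $H$, then similarity invariance at the critical point $P_n$ to put the four generators in the kernel, with scaling and rotation constant in radial--tangential coordinates (mode $0$) and translations split between modes $1$ and $n-1$. You also supply details the paper leaves implicit: the $\overline{B_k}=B_k^T$ convention, orthogonality of the mode subspaces, and the use of $n\geq3$ to separate the two translation components into distinct modes.
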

\begin{proof}
The discrete Fourier transform block-diagonalizes every block-circulant
matrix and gives \eqref{eq:symbol}.  Scaling and rotation have constant
radial--tangential coordinates and hence lie in mode zero.  A constant
Euclidean translation has radial--tangential coordinates proportional to
$(\cos i\vartheta,-\sin i\vartheta)$ or
$(\sin i\vartheta,\cos i\vartheta)$, which are carried by modes $1$ and
$n-1$.  Since $P_n$ is a critical point, differentiating similarity
invariance once more puts all four tangent vectors in the Hessian kernel.
\end{proof}

We can now compute these three entries as explicitly as in the
Bogosel--Bucur reduction.  Write
$a(u,v)=\int_{P_n}\nabla u\cdot\nabla v$ and introduce the radial and
tangential components of the material derivative attached to vertex $j$,
\begin{equation}
 U_{j,r}=\cos(j\vartheta)U_j^1+\sin(j\vartheta)U_j^2,\qquad
 U_{j,t}=-\sin(j\vartheta)U_j^1+\cos(j\vartheta)U_j^2.
\end{equation}
Define the three real numbers
\begin{align}
 G^{rr}_k&=\sum_{j=0}^{n-1}\cos(kj\vartheta)a(U_0^1,U_{j,r}),
 &G^{tt}_k&=\sum_{j=0}^{n-1}\cos(kj\vartheta)a(U_0^2,U_{j,t}),\label{eq:G-three}\\
 G^{rt}_k&=\sum_{j=0}^{n-1}\sin(kj\vartheta)a(U_0^1,U_{j,t}).
\end{align}
The reflection parities following \eqref{eq:dihedral-covariance} remove the
other cosine and sine sums and imply that the lower off-diagonal entry is
$-iG^{rt}_k$.

\begin{proposition}[explicit Fourier entries]\label{prop:explicit-symbol}
For every $k=1,\ldots,n-1$, the entries of the scale-invariant Hessian
symbol $B_k$ in \eqref{eq:symbol-entries} are given by
\begin{align}
 \alpha_k&=\frac1{A^2}\left[G^{rr}_k
       -\frac{2(1-\cos(k\vartheta))}{\sin^2\vartheta}X\right],\label{eq:alpha-explicit}\\
 \beta_k&=\frac1{A^2}\left[G^{tt}_k
       -\frac{2(1-\cos(k\vartheta))}{\sin^2\vartheta}Y\right],\label{eq:beta-explicit}\\
 \gamma_k&=\frac1{A^2}G^{rt}_k.                         \label{eq:gamma-explicit}
\end{align}
Thus the three sums in \eqref{eq:G-three}, all arising from
$\int DU_iDU_j^T$, are the only difficult terms.  Every other contribution
to $\alpha_k,\beta_k,\gamma_k$ is explicit.  For the additive Hessian, the
same formulas hold without the common factor $A^{-2}$.
\end{proposition}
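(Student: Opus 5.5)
Our plan is to split $D^2\F$ at $P_n$ via \eqref{eq:HF-critical} and \eqref{eq:HJ} into three pieces: the Gram term $A^{-2}\int DU_iDU_j^T$, the local terms of \eqref{eq:HJ} combined with the area Hessian, and the rank-one term $-2JA^{-4}\nabla A\otimes\nabla A$. Each piece is block circulant after conjugation by $\mathsf R$, so its symbol is computed separately and the three are added. For the rank-one term, $\nabla A$ has constant radial--tangential coordinates $(\sin\vartheta,0)$. Its blocks $C_j$ are therefore all equal, and the sum $\sum_j\rho_k^jC_j$ vanishes for $k\neq0$. This is why the statement is restricted to $1\le k\le n-1$.

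For the local and area terms, the symbol is exactly the left side of \eqref{eq:explicit-cancellation}. That identity is already established from Lemma~\ref{lem:local-blocks}, \eqref{eq:area-symbol-explicit} and the sector identities \eqref{eq:sector-identities}. It contributes
\[
-\frac{2(1-\cos(k\vartheta))}{A^2\sin^2\vartheta}\operatorname{diag}(X,Y).
\]
These are precisely the explicit parts of \eqref{eq:alpha-explicit}--\eqref{eq:beta-explicit}, and they contribute nothing to $\gamma_k$.

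The remaining step is the Gram term. In the rotated basis, block $(0,j)$ of $\mathsf R^T(\int DU_iDU_j^T)\mathsf R$ equals $R_0^T(\int DU_0DU_j^T)R_j$, and $R_0=I$. Its $(p,q)$ entry is $a(U_0^p,(R_j^TU_j)^q)$, with $(R_j^TU_j)^1=U_{j,r}$ and $(R_j^TU_j)^2=U_{j,t}$. Hence
\[
B_k^{\rm Gram}=A^{-2}\sum_j\rho_k^j
\begin{pmatrix}a(U_0^1,U_{j,r})&a(U_0^1,U_{j,t})\\ a(U_0^2,U_{j,r})&a(U_0^2,U_{j,t})\end{pmatrix}.
\]
We write $\rho_k^j=\cos(kj\vartheta)+i\sin(kj\vartheta)$ and split each entry into its cosine and sine sums.

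To remove the unwanted sums we use the reflection $S$. From \eqref{eq:dihedral-covariance} and $U_0(Sx)=SU_0(x)$ we derive $U_{-j}(x)=SU_j(Sx)$. Combined with $R_{-j}=SR_jS$, this gives $U_{-j,r}(x)=U_{j,r}(Sx)$ and $U_{-j,t}(x)=-U_{j,t}(Sx)$. Since $a$ is invariant under $x\mapsto Sx$, $U_0^1$ is even and $U_0^2$ is odd, the following hold:
\begin{itemize}
\item $a(U_0^1,U_{j,r})$ and $a(U_0^2,U_{j,t})$ are even in $j$, so their sine sums vanish;
\item $a(U_0^1,U_{j,t})$ and $a(U_0^2,U_{j,r})$ are odd in $j$, so their cosine sums vanish.
\end{itemize}
The diagonal entries are then $G^{rr}_k$ and $G^{tt}_k$, and the upper off-diagonal entry is $iG^{rt}_k$. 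Hermitian symmetry of $B_k$, which follows from \eqref{eq:block-circ} and the symmetry of the Gram matrix, forces the lower entry to be $-iG^{rt}_k$; alternatively one checks $a(U_0^2,U_{j,r})=-a(U_0^1,U_{-j,t})$ via the rotation covariance. Comparing with \eqref{eq:symbol-entries} gives \eqref{eq:gamma-explicit}. Dropping the overall factor $A^{-2}$, and using the additive form of \eqref{eq:explicit-cancellation}, gives the additive Hessian version.

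We expect the main obstacle to be the parity bookkeeping: the index-reflection identities $U_{-j,r}\circ S=U_{j,r}$ and $U_{-j,t}\circ S=-U_{j,t}$, and consistency with the sign convention $\rho_k^j$ versus $\rho_k^{-j}$ in \eqref{eq:symbol}. A sign slip here flips the sign of $\gamma_k$, which does not affect the eigenvalues \eqref{eq:mode-eigs} but would misstate the formula.
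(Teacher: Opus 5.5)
Your proposal is correct and follows the paper's proof. The reflection parities kill the diagonal sine sums and the off-diagonal cosine sum, Hermitian symmetry supplies $-iG^{rt}_k$ below the diagonal, and \eqref{eq:explicit-cancellation} supplies the explicit diagonal shifts; your explicit check that the rank-one term $-2JA^{-4}\nabla A\otimes\nabla A$ has equal blocks, so its symbol vanishes for $1\leq k\leq n-1$, is a detail the paper leaves to the preceding text. One small slip in your optional alternative check: rotation covariance gives $a(U_0^2,U_{j,r})=a(U_0^1,U_{-j,t})=-a(U_0^1,U_{j,t})$, without your extra minus sign (with it the lower entry would come out as $+iG^{rt}_k$), but your main argument uses Hermitian symmetry, so the proof is unaffected.
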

\begin{proof}
The Gram contribution in block $(0,j)$, after rotating its second vector
component, is $a(U_0^p,U_{j,q})_{p,q\in\{r,t\}}$.  Reflection sends $j$ to
$-j$ and gives
\begin{align*}
 a(U_0^1,U_{-j,r})&=a(U_0^1,U_{j,r}),&
 a(U_0^2,U_{-j,t})&=a(U_0^2,U_{j,t}),\\
 a(U_0^1,U_{-j,t})&=-a(U_0^1,U_{j,t}).
\end{align*}
The diagonal sine sums and the off-diagonal cosine sum therefore vanish.
The upper off-diagonal Fourier sum is $iG_k^{rt}$, and Hessian symmetry
gives $-iG_k^{rt}$ below it.  Adding the already computed local and area
contribution \eqref{eq:explicit-cancellation} proves
\eqref{eq:alpha-explicit}--\eqref{eq:gamma-explicit}.
\end{proof}

\begin{remark}
Since $X+Y=2J/n$, Proposition~\ref{prop:explicit-symbol} gives
\begin{equation}\label{eq:trace-no-delta}
 A^2\operatorname{tr}B_k
 =G^{rr}_k+G^{tt}_k-\frac{4(1-\cos(k\vartheta))J}{n\sin^2\vartheta}.
\end{equation}
Thus the explicit sector anisotropy cancels from the trace; it affects only
the difference between the explicit diagonal entries.  The Gram sums still depend on the state.
The value of $Z$ is not needed in the Hessian symbol
$B_k$ in \eqref{eq:symbol-entries}.
\end{remark}

The cancellation of $Z$ can also be seen before taking the final symbol:
\begin{equation}
 -X\sin\vartheta+Z\cos\vartheta=-\frac{J\sin\vartheta}{n},
\end{equation}
which is independent of $\Delta_n$ and is exactly the identity used in
\eqref{eq:local-symbol-entries}.  For $n\geq5$, one has $\cos\vartheta>0$ and
$\Delta_n>0$, so the explicit negative shift in $\alpha_k$ is stronger than
the shift in $\beta_k$; for $n=3$ the ordering is reversed.  When $n=4$,
$\cos\vartheta=0$, both shifts are $-(1-\cos(k\vartheta))J/(2A^2)$, so the explicit
sector anisotropy disappears from the symbol.

The mixed-cell representation gives a version containing only $\tau$ and
its derivative.  Direct substitution gives
\begin{align}\label{eq:mixed-cell-penalties}
 \frac{2(1-\cos(k\vartheta))}{\sin^2\vartheta}X&=\sin^2(k\vartheta/2)\tan^2(\vartheta/2)\,
       [\tau(q)+\cos\vartheta[q\tau'(q)-2\tau(q)]],\\
 \frac{2(1-\cos(k\vartheta))}{\sin^2\vartheta}Y&=\sin^2(k\vartheta/2)\tan^2(\vartheta/2)\,
       [\tau(q)-\cos\vartheta[q\tau'(q)-2\tau(q)]].\notag
\end{align}
Thus the explicit part can be evaluated without integrating three separate
quadratic quantities.

Mode zero requires retaining the rank-one term.  More explicitly,
\begin{equation}\label{eq:mode-zero-gram}
 B_0=A^{-2}\begin{pmatrix}G^{rr}_0&0\\0&G^{tt}_0\end{pmatrix}
 -\frac{2Jn \sin^2\vartheta}{A^4}\begin{pmatrix}1&0\\0&0\end{pmatrix}=0.
\end{equation}
Equivalently, similarity invariance gives the useful exact checks
$G^{rr}_0=2Jn\sin^2\vartheta/A^2=8J/n$ and $G^{tt}_0=0$.

Translation invariance gives additional exact checks in mode one.  Since
the translation vector has complex radial--tangential coordinates $(1,i)$,
\begin{equation}\label{eq:translation-symbol-identities}
 \alpha_1=\beta_1=\gamma_1,\qquad
 B_1=\alpha_1\begin{pmatrix}1&i\\-i&1\end{pmatrix}.
\end{equation}
Consequently its eigenvalues are $\alpha_1+\beta_1$ and zero.  The
$k=n-1$ symbol is its complex conjugate.  Thus a rigorous computation
can insert the translation zero exactly and certify only
$\alpha_1+\beta_1<0$.  If $n$ is even, the
Nyquist mode $k=n/2$ has $G^{rt}_{n/2}=\gamma_{n/2}=0$ and is already
diagonal.

More generally,
\begin{equation}\label{eq:conjugate-symbols}
 \alpha_{n-k}=\alpha_k,\qquad
 \beta_{n-k}=\beta_k,\qquad
 \gamma_{n-k}=-\gamma_k.
\end{equation}
It therefore suffices to compute the modes
$1\leq k\leq\lfloor n/2\rfloor$.
An equivalent test for strict negativity, avoiding square roots, is
\[
 \alpha_k+\beta_k<0,\qquad \alpha_k\beta_k-\gamma_k^2>0.
\]
When evaluating the trace from sector and Gram data, use
\eqref{eq:trace-no-delta} to preserve the exact cancellation.

\section{Validated finite-element computation with FLINT}\label{sec:validated}

We now turn a numerical Hessian into a proof of its sign.  The argument
follows one entry of the Fourier symbol from the PDE solves to an interval
containing its exact value.  Its two analytic ingredients are a residual
estimate for the energy error and a cancellation identity for the Hessian
error.  Outward rounding makes each numerical evaluation of these bounds
an enclosure.

\subsection{What must be certified?}\label{sec:certificate-target}

For each mode, we need bounds for the three entries
$\alpha_k,\beta_k,\gamma_k$ of \eqref{eq:symbol-entries}.
Once they are enclosed, \eqref{eq:mode-eigs} encloses the eigenvalues.
Strict negativity follows when every nonsimilarity branch has an upper
endpoint below zero; Proposition~\ref{prop:modes} supplies the four exact
similarity zeros.

For a running example, take the pentagon and the radial diagonal entry
$\alpha_2$.  Section~\ref{sec:pentagon} reports the enclosure
\[
 \big|\alpha_2-(-0.0147312069261)\big|\leq4.743372\,10^{-4}.
\]
The purpose of this section is to explain how such a radius is proved.
A negative diagonal entry alone does not establish negative definiteness:
the other diagonal and the off-diagonal entry must also be enclosed.

The PDE computation evaluates real Hessian pairings.  The following
proposition identifies the displacements that give each symbol entry.

\begin{proposition}[real Fourier pairings used by the certificate]
\label{prop:real-fourier-pairings}
For $1\leq k<n/2$, define the vertex directions
\begin{align*}
 r^c_{k,j}&=\sqrt{\frac2n}\cos(kj\vartheta)
             (\cos(j\vartheta),\sin(j\vartheta))
             &&\text{(radial direction, cosine)},\\
 t^c_{k,j}&=\sqrt{\frac2n}\cos(kj\vartheta)
             (-\sin(j\vartheta),\cos(j\vartheta))
             &&\text{(tangential direction, cosine)},\\
 t^s_{k,j}&=\sqrt{\frac2n}\sin(kj\vartheta)
             (-\sin(j\vartheta),\cos(j\vartheta))
             &&\text{(tangential direction, sine)}.
\end{align*}
At $P_n$, the entries of \eqref{eq:symbol-entries} satisfy
\begin{equation}\label{eq:real-pairing-symbol}
\begin{aligned}
 D^2\F[r_k^c,r_k^c]&=\alpha_k,&
 D^2\F[t_k^c,t_k^c]&=\beta_k,\\
 D^2\F[r_k^c,t_k^c]&=0,&
 D^2\F[r_k^c,t_k^s]&=\gamma_k.
\end{aligned}
\end{equation}
For even $n$ and $k=n/2$, replace $\sqrt{2/n}\cos(kj\vartheta)$
by $(-1)^j/\sqrt n$ in the cosine directions.  Their pairings still give
$\alpha_k,\beta_k,0$; the sine direction vanishes and $\gamma_k=0$.
\end{proposition}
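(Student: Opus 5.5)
The plan is to express each real direction in radial--tangential block coordinates and then read off the pairings through the block-circulant structure of Proposition~\ref{prop:block-circ} and the symbol form \eqref{eq:symbol-entries}. In the rotated coordinates of \eqref{eq:Hrad}, the direction $r^c_k$ has block coordinates $w_j=\sqrt{2/n}\cos(kj\vartheta)(1,0)$. Similarly, $t^c_k$ has $\sqrt{2/n}\cos(kj\vartheta)(0,1)$ and $t^s_k$ has $\sqrt{2/n}\sin(kj\vartheta)(0,1)$. Since $\mathsf R$ is orthogonal, $D^2\F[v,w]=(\mathsf R^Tv)^TH(\mathsf R^Tw)$, so every claimed identity reduces to a computation with $H_{ij}=C_{j-i}$.

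Write $\cos(kj\vartheta)=(\rho_k^j+\rho_k^{-j})/2$ and $\sin(kj\vartheta)=(\rho_k^j-\rho_k^{-j})/(2i)$. Each real direction is then a combination of the complex Fourier vectors $f_k e$ and $f_{-k}e$, where $(f_k)_j=\rho_k^j/\sqrt n$ and $e\in\{(1,0),(0,1)\}$. For these vectors,
\[
 \bar f_m^{\,T}\otimes e^T\,H\,f_l\otimes e'=\delta_{ml}\,e^T B_l e'
\]
up to the sign convention in \eqref{eq:symbol}. This holds because $\sum_{i,j}\bar\rho_m^{\,i}\rho_l^{\,j}C_{j-i}/n$ vanishes unless $m\equiv l$, and otherwise equals the symbol sum. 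The condition $1\le k<n/2$ guarantees $k\not\equiv -k\pmod n$, so the cross terms between $f_k$ and $f_{-k}$ vanish.

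Expanding the bilinear forms gives $D^2\F[r^c_k,r^c_k]=\tfrac12(\alpha_k+\alpha_{-k})$, and similarly for $\beta_k$. By \eqref{eq:conjugate-symbols} these equal $\alpha_k$ and $\beta_k$. The pairing $[r^c_k,t^c_k]$ gives $\tfrac12\bigl((B_k)_{12}+(B_{-k})_{12}\bigr)=\tfrac12(i\gamma_k-i\gamma_k)=0$. The pairing $[r^c_k,t^s_k]$ carries the extra factors $\pm1/i$, so it produces $\tfrac1{2i}\cdot$ a difference of off-diagonal entries, which equals $\pm\gamma_k$ and is real.

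The main obstacle is pinning down the sign in the last pairing. It depends on whether $\rho_k^j$ or $\rho_k^{-j}$ multiplies $C_j$ in \eqref{eq:symbol}, and on the orientation of the $i\gamma_k$ entry. I will fix these by comparing directly with Proposition~\ref{prop:explicit-symbol}. There, the upper off-diagonal entry is $\sum_j\sin(kj\vartheta)a(U_0^1,U_{j,t})$, which is exactly what the $r^c/t^s$ pairing produces after the cosine-parity terms drop out by reflection. Alternatively, I can compute the $r^c/t^s$ pairing directly as $\frac2n\sum_{i,j}\cos(ki\vartheta)\sin(kj\vartheta)(C_{j-i})_{12}$. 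Substituting $j=i+m$, using the trig addition formula, and averaging over $i$ leaves $\sum_m\sin(km\vartheta)(C_m)_{12}$, plus a cosine sum that vanishes by the reflection parity. This identifies the pairing as $\gamma_k$ under the paper's conventions.

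For even $n$ and $k=n/2$, the direction $(-1)^j/\sqrt n$ is the single real Fourier vector $f_{n/2}$. Its pairings therefore give the diagonal entries of $B_{n/2}$ directly. The off-diagonal entry vanishes because $\gamma_{n/2}=-\gamma_{n/2}$ by \eqref{eq:conjugate-symbols}, and $\sin(\tfrac n2 j\vartheta)=\sin(\pi j)=0$ makes the sine direction zero.
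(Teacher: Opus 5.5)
Your proposal is correct and follows essentially the paper's argument: you rotate to radial--tangential block coordinates, substitute into $H_{ij}=C_{j-i}$, and use Fourier orthogonality. Your decomposition into $f_{\pm k}$ together with \eqref{eq:conjugate-symbols} is a repackaging of the paper's real computation giving $\operatorname{Re}(i\gamma_k)=0$ and $\operatorname{Im}(i\gamma_k)=\gamma_k$. Two small remarks: with the paper's convention $B_k=\sum_j\rho_k^jC_j$ there is no sign ambiguity, since $\frac{1}{2i}\bigl(i\gamma_k-(-i\gamma_k)\bigr)=\gamma_k$ exactly; and the leftover cosine term in your direct computation already vanishes because $\sum_i\cos(ki\vartheta)\sin(ki\vartheta)=0$ for $2k\not\equiv0\pmod n$, so reflection parity is not needed there.
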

\begin{proof}
For $k<n/2$, the sums of the squared sines and cosines are $n/2$.
Substituting these normalized directions into $H_{ij}=C_{j-i}$ and using
Fourier orthogonality gives the diagonal entries and, for the two mixed
pairings, $\operatorname{Re}(i\gamma_k)=0$ and
$\operatorname{Im}(i\gamma_k)=\gamma_k$.  For $k=n/2$, the sum of the
squared cosines is $n$ and reflection gives the zero off-diagonal entry.
\end{proof}

\subsection{Why a residual bounds an error}\label{sec:residual-principle}

The finite-element mesh is a uniform subdivision of the coarse fan
$T_j=\operatorname{conv}\{0,a_j,a_{j+1}\}$.  Each coarse edge is divided
into $m$ equal segments, and lines parallel to the edges split each sector
into $m^2$ similar triangles.  Thus the mesh has $nm^2$ triangles, preserves
the polygon's rotations and reflections, and every fine triangle lies
entirely in one coarse sector.  In particular, every fan ray is a union
of mesh edges; this is what \emph{fitted} means below.  We use $h=1/m$
as the refinement parameter, comparable to the maximum element diameter
for fixed $n$.

Begin with the torsion equation itself.  Let $u_h\in V_h$ be its exact
Galerkin solution, where $V_h\subset H_0^1(P_n)$ is the space of continuous
piecewise-affine functions on the fitted mesh.  We use
$a(v,w)=\int_{P_n}\nabla v\cdot\nabla w$ and
$\|v\|_a=\|\nabla v\|_{L^2(P_n)}$; unqualified norms of fields below are
$L^2(P_n)$ norms.

The residual measures the failure of $u_h$ to satisfy the equation for
an arbitrary test function:
\[
 \int_{P_n}v-a(u_h,v)=a(u-u_h,v),\qquad v\in H_0^1(P_n).
\]
It vanishes for $v\in V_h$, but the error $u-u_h$ need not belong to $V_h$.
To bound the residual for every test function, choose an approximation
$y_h$ to the flux $\nabla u$ satisfying the exact balance
$\operatorname{div}y_h=-1$.  We require $y_h\in H(\operatorname{div};P_n)$:
the field and its divergence are square-integrable.  For a piecewise smooth
field this requires matching normal components across mesh edges, so that
integration by parts leaves no interior edge terms.  Hence
\[
 a(u-u_h,v)=\int_{P_n}(y_h-\nabla u_h)\cdot\nabla v.
\]
Taking $v=u-u_h$ and applying Cauchy--Schwarz gives
\begin{equation}\label{eq:equilibrated-majorant}
 \|u-u_h\|_a\leq\|y_h-\nabla u_h\|.
\end{equation}
The unknown solution has disappeared from the right-hand side.  This
guaranteed bound is a consequence of the classical Prager--Synge
hypercircle principle \cite{PragerSynge1947}.  Ern and Vohral\'{i}k
\cite{ErnVohralik2015} develop equilibrated-flux estimates with guaranteed
upper bounds and local efficiency, robust with respect to polynomial
degree.  Their fluxes are obtained from mixed finite-element problems on
vertex patches.  Ern, Smears, and Vohral\'{i}k
\cite{ErnSmearsVohralik2017} treat elliptic problems with $H^{-1}$ source
terms, a setting relevant to the differentiated equations below, whose
right-hand sides act on gradients of test functions.

An admissible flux has the following explicit construction.  For any continuous
piecewise-affine scalar function $\psi_{u,h}$, set
\begin{equation}\label{eq:curl-state-flux}
 y_h=-\frac{x}{2}+\operatorname{curl}\psi_{u,h},\qquad
 \operatorname{curl}\psi=(\partial_2\psi,-\partial_1\psi).
\end{equation}
The first term has divergence $-1$ and the curl has divergence zero.
The normal component of the curl matches across edges because it is the
tangential derivative of a continuous trace.  Choosing $\psi_{u,h}$ by
least squares makes the bound smaller.  Any computed nodal values define
an admissible flux, so the accuracy of that auxiliary solve affects the
sharpness of the bound, not its validity.

The proof of \eqref{eq:equilibrated-majorant} also applies to any conforming
candidate $\widetilde u$ in place of $u_h$.  This observation will allow us
to use rounded coefficient vectors directly.

Guaranteed error estimation also connects a posteriori flux bounds with
computable a priori estimates.  Liu and Oishi \cite{LiuOishi2013}
construct computable a priori bounds for the Poisson finite-element error
on polygonal domains, including nonconvex ones, and use them to enclose
Laplacian eigenvalues.  Li and Liu \cite{LiLiu2018} extend the hypercircle
construction to nonhomogeneous Neumann problems, combining it with an
explicit trace bound.  Liu, Nakao, You, and Oishi
\cite{LiuNakaoYouOishi2021} develop both explicit a posteriori and a priori
estimates for the Stokes equations using an extended hypercircle method.
These works illustrate how a computable flux comparison can also produce
an a priori bound valid for a whole class of right-hand sides.

In the present certificate, the flux mismatch is evaluated for each
computed solution, while explicit bounds for the differentiated forms and
for coercivity control the transfer to the Hessian and the algebraic
errors.  Thus analytical bounds and computed residuals play complementary
roles.  The flux construction above is specific to our two-dimensional
problem.  The additional step is the second-variation identity in
Proposition~\ref{prop:second-residual}, which preserves products of energy
errors when bounding a Hessian entry.

\subsection{From the state equation to a Hessian error}\label{sec:hessian-residual}

Fix two of the vertex displacements in
Proposition~\ref{prop:real-fourier-pairings}; for the running example both
are the radial cosine direction in mode two.  We first describe exact
finite-element solves, and account for their numerical solution in the
next subsection.

Let $q_i,r_i\in\R^2$ be prescribed displacements of each vertex $a_i$.
Extend them continuously over the polygon by affine interpolation on each
fan triangle, taking zero displacement at the center, and denote the
resulting fields by $\theta_q,\theta_r$.  Thus
\[
 \theta_q=\sum_{i=0}^{n-1}\phi_i q_i,\qquad
 \theta_r=\sum_{i=0}^{n-1}\phi_i r_i.
\]
On the fixed regular polygon set
\begin{equation}\label{eq:pullback-map}
 \Phi_{s,t}=\Id+s\theta_q+t\theta_r,\qquad
 D\Phi_{s,t}=I+sD\theta_q+tD\theta_r.
\end{equation}
The derivatives $D\theta_q,D\theta_r$ are constant on each fan triangle.
After composition with $\Phi_{s,t}$, the state on the deformed polygon
satisfies $a_{s,t}(u,v)=\ell_{s,t}(v)$ for every $v\in H_0^1(P_n)$, where
\begin{equation}\label{eq:pulled-forms}
 a_{s,t}(u,v)=\int_{P_n}\mathbb A_{s,t}\nabla u\cdot\nabla v,
 \qquad
 \ell_{s,t}(v)=\int_{P_n}\det(D\Phi_{s,t})v,
\end{equation}
and
\begin{equation}\label{eq:pullback-coefficients}
 \mathbb A_{s,t}
 =\det(D\Phi_{s,t})(D\Phi_{s,t})^{-1}(D\Phi_{s,t})^{-T}.
\end{equation}
At $(s,t)=(0,0)$ these reduce to $a$ and
$\ell(v)=\int_{P_n}v$.

Subscripts $q,r,qr$ on these forms and coefficients denote, respectively,
the derivatives $\partial_s,\partial_t,\partial_s\partial_t$ at
$(s,t)=(0,0)$.  For fixed test functions $v,\zeta$, differentiation gives
\begin{align}
 \ell_q(v)&=\int_{P_n}(\operatorname{div}\theta_q)v,\notag\\
 \ell_{qr}(v)&=\int_{P_n}
 \big[(\operatorname{div}\theta_q)(\operatorname{div}\theta_r)
      -\tr(D\theta_q D\theta_r)\big]v,\label{eq:d-pullback}\\
 a_q(v,\zeta)&=\int_{P_n}\mathbb A_q\nabla v\cdot\nabla\zeta,\notag\\
 a_{qr}(v,\zeta)&=\int_{P_n}\mathbb A_{qr}\nabla v\cdot\nabla\zeta.
 \notag
\end{align}
The first derivatives in direction $r$ follow by replacing $q$ with $r$.
The matrices $\mathbb A_q,\mathbb A_{qr}$ are the corresponding derivatives
of \eqref{eq:pullback-coefficients}; their expansions are recorded in
Appendix~\ref{app:pullback}.  Only their action in these forms and their
norm bounds are needed in the argument.

The Galerkin equation and its derivatives give four linear systems with
the same stiffness matrix.  The first computes the state, the next two its responses
to the chosen vertex displacements, and the fourth its mixed response.
For every $v_h\in V_h$, these equations are

\begin{align}
 a(u_h,v_h)&=\ell(v_h),\notag\\
 a(u_{h,q},v_h)&=\ell_q(v_h)-a_q(u_h,v_h),\notag\\
 a(u_{h,r},v_h)&=\ell_r(v_h)-a_r(u_h,v_h),\label{eq:discrete-differentiated-systems}\\
 a(z_h,v_h)&=\ell_{qr}(v_h)-a_q(u_{h,r},v_h)
 -a_r(u_{h,q},v_h)-a_{qr}(u_h,v_h).\notag
\end{align}
Here $u_{h,q}$ and $u_{h,r}$ are derivatives of the exact discrete
solution.  The exact continuous derivatives are denoted $u_q,u_r$;
in the vertex notation of \eqref{eq:material},
$u_q=\sum_i q_i\cdot U_i$ and similarly for $r$.
When $q=r$, the two first-derivative solves coincide.
The exact discrete energy is $J_h=\frac12\ell(u_h)=\frac12a(u_h,u_h)$;
its mixed derivative is denoted $J_{h,qr}$.

The differentiated forms transfer errors between these equations.  Their
size is controlled by constants satisfying
\begin{equation}
 |a_q(v,\zeta)|\leq C_q\|v\|_a\|\zeta\|_a,\qquad
 |a_r(v,\zeta)|\leq C_r\|v\|_a\|\zeta\|_a,\qquad
 |a_{qr}(v,\zeta)|\leq C_{qr}\|v\|_a\|\zeta\|_a.             \label{eq:Cqr}
\end{equation}
The fourth equation suggests an auxiliary continuous problem with the
exact discrete fields on its right-hand side.  Define
\begin{equation}\label{eq:second-lifting-functional}
 \mathcal R_{qr}^h(v)=\ell_{qr}(v)
 -a_q(u_{h,r},v)-a_r(u_{h,q},v)-a_{qr}(u_h,v),
\end{equation}
and let $Z_{qr}^h\in H_0^1(P_n)$ satisfy
\begin{equation}\label{eq:second-lifting}
 a(Z_{qr}^h,v)=\mathcal R_{qr}^h(v)\quad(v\in H_0^1(P_n)).
\end{equation}
Its Galerkin approximation is precisely $z_h=u_{h,qr}$, the same
$z_h$ as in \eqref{eq:discrete-differentiated-systems}.
The lifting $Z_{qr}^h$ uses discrete data but solves a continuous weak
problem.  Its role is to turn the remaining Hessian error into an energy
pairing.  Its discrete approximation $z_h$ is already available from the
fourth solve; no exact second state derivative needs to be computed.

\begin{proposition}[second-variation residual identity]\label{prop:second-residual}
With the form bounds \eqref{eq:Cqr} and $e=u-u_h$, assume that
\begin{align*}
 \|e\|_a&\leq\delta_0,&
 \|u_q-u_{h,q}\|_a&\leq\delta_q,&
 \|u_r-u_{h,r}\|_a&\leq\delta_r,\\
 \|Z_{qr}^h-z_h\|_a&\leq\eta_{qr},
\end{align*}
then
\begin{equation}\label{eq:second-residual-bound}
 |(J-J_h)_{qr}|
 \leq \delta_q\delta_r+\frac12C_{qr}\delta_0^2
       +\delta_0\eta_{qr}.
\end{equation}
Every quantity on the right can be enclosed by a functional residual
majorant.
\end{proposition}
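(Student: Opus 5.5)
The plan is to derive an exact error representation for $(J-J_h)_{qr}$ whose terms are all products of errors, and then apply Cauchy--Schwarz together with \eqref{eq:Cqr}. Both energies will be written as maxima of the pulled-back Lagrangian. On the fixed domain,
\[
 J(s,t)=\max_{v\in H_0^1(P_n)}\Big[\ell_{s,t}(v)-\tfrac12 a_{s,t}(v,v)\Big],
\]
and $J_h(s,t)$ is the same maximum taken over $V_h$. Indeed, at the maximizer the value equals $\tfrac12\ell_{s,t}(u_{s,t})$, which is $J$ of the deformed polygon. The smoothness of the pullback noted in the proof of Theorem~\ref{thm:conditional} makes both functions $C^2$ near $(0,0)$, so their derivatives may be computed by differentiating the state equations.

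\textbf{First step: the second derivatives.} The envelope argument gives $J_q=\ell_q(u)-\tfrac12a_q(u,u)$. Differentiating in $r$ gives
\[
 J_{qr}=\ell_{qr}(u)+\ell_q(u_r)-a_q(u,u_r)-\tfrac12a_{qr}(u,u).
\]
I then test the continuous equation for $u_q$ with $u_r$; this replaces $\ell_q(u_r)-a_q(u,u_r)$ by $a(u_q,u_r)$. The result is
\[
 J_{qr}=\ell_{qr}(u)-\tfrac12a_{qr}(u,u)+a(u_q,u_r).
\]
The identical computation in $V_h$, using \eqref{eq:discrete-differentiated-systems} with test function $u_{h,r}\in V_h$, gives the same formula with $u,u_q,u_r$ replaced by $u_h,u_{h,q},u_{h,r}$.

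\textbf{Second step: expand the difference.} Write $e=u-u_h$, $e_q=u_q-u_{h,q}$ and $e_r=u_r-u_{h,r}$. Expanding gives
\[
 a_{qr}(u,u)-a_{qr}(u_h,u_h)=2a_{qr}(u_h,e)+a_{qr}(e,e),
\]
\[
 a(u_q,u_r)-a(u_{h,q},u_{h,r})=a(e_q,e_r)+a(e_q,u_{h,r})+a(u_{h,q},e_r).
\]
The two mixed terms are removed by the differentiated Galerkin orthogonality. Subtracting the discrete equation from the continuous one, for $v_h\in V_h$, gives $a(e_q,v_h)=-a_q(e,v_h)$. Taking $v_h=u_{h,r}$ yields $a(e_q,u_{h,r})=-a_q(u_{h,r},e)$, and likewise $a(u_{h,q},e_r)=-a_r(u_{h,q},e)$. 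These rewritings use the symmetry of $\mathbb A_q,\mathbb A_r,\mathbb A_{qr}$. Collecting terms against \eqref{eq:second-lifting-functional} then gives
\[
 (J-J_h)_{qr}=\mathcal R^h_{qr}(e)-\tfrac12a_{qr}(e,e)+a(e_q,e_r).
\]

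\textbf{Third step: the lifting.} By \eqref{eq:second-lifting}, $\mathcal R^h_{qr}(e)=a(Z^h_{qr},e)$. Since $z_h$ is the Galerkin approximation of $Z^h_{qr}$, Galerkin orthogonality of $e$ to $V_h$ gives $a(z_h,e)=0$. Hence
\[
 \mathcal R^h_{qr}(e)=a(Z^h_{qr}-z_h,e).
\]
Cauchy--Schwarz in the energy inner product and the bound \eqref{eq:Cqr} on $a_{qr}$ now give \eqref{eq:second-residual-bound} directly.

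\textbf{Final assertion and main obstacle.} For the last sentence, each of $e$, $e_q$, $e_r$ and $Z^h_{qr}-z_h$ is the error of a conforming approximation to a continuous problem of the form $a(w,v)=F(v)$. Here $F$ is a load term plus $\int g\cdot\nabla v$ with piecewise-constant-coefficient data. The Prager--Synge argument behind \eqref{eq:equilibrated-majorant} therefore applies with any $H(\operatorname{div})$ flux that balances the load, for example a particular field plus a curl as in \eqref{eq:curl-state-flux}.

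The main obstacle is the bookkeeping in the second step. One must verify that the cross terms generated by $a(u_q,u_r)$ combine exactly with $-a_{qr}(u_h,e)$ and $\ell_{qr}(e)$ into $\mathcal R^h_{qr}(e)$, leaving no single first-order error factor. This relies on the symmetry of the differentiated coefficient matrices and on choosing the discrete test functions $u_{h,q},u_{h,r}\in V_h$ precisely.
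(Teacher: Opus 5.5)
Your proof is correct. It arrives at exactly the paper's identity \eqref{eq:second-residual-identity}, but by a different route.

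The paper starts from the exact error identity $J-J_h=\frac12a_z(e_z,e_z)$ and differentiates it twice. This produces the term $a(e_{qr},e)$, which involves second material derivatives. The paper removes it in two steps: Galerkin orthogonality disposes of $u_{h,qr}\in V_h$, and the twice-differentiated continuous state equation, tested with $e$, eliminates $u_{qr}$.

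You instead compute $J_{qr}$ and $J_{h,qr}$ separately through the envelope (Lagrangian) formula. Using only the first-order equations, you reduce them to $J_{qr}=\ell_{qr}(u)-\frac12a_{qr}(u,u)+a(u_q,u_r)$ and its discrete analogue; this is the representation the paper notes in the remark after the proposition. You then cancel the cross terms with the once-differentiated Galerkin orthogonality $a(e_q,v_h)=-a_q(e,v_h)$.

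\textbf{What each route buys.} Your route never uses $u_{qr}$, $u_{h,qr}$ or the second-order state equation to obtain the identity. The field $z_h$ enters only at the very end. There, any $w_h\in V_h$ gives $\mathcal R^h_{qr}(e)=a(Z^h_{qr}-w_h,e)$, and $z_h$ is simply the best choice by Galerkin best approximation. The paper's route, by contrast, makes it evident from the outset that only products of errors can survive, since one differentiates a quadratic form in $e$. It also shows why the lifting $Z^h_{qr}$ appears naturally as the continuous counterpart of the fourth discrete system.

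\textbf{A small imprecision in your final paragraph.} The loads for $u_q$ and $u_r$ contain $-a_q(u,\cdot)$ and $-a_r(u,\cdot)$, which involve the unknown state. So these are not computable ``data'', and the flux majorants for $\delta_q,\delta_r$ need a transfer term such as $C_q\Phi_0$, as in \eqref{eq:combined-deltas}. This does not affect the inequality itself.
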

\begin{proof}
For nearby vertex coordinates $z$, use the same pullback to keep $V_h$
fixed.  Write $a_z$ for the pulled-back form and $e_z=u_z-u_{h,z}$
for the state error.  All state derivatives below are taken after this
pullback, with respect to the deformation parameters in
\eqref{eq:pullback-map}, at $(s,t)=(0,0)$.  In particular,
\[
 u_{qr}=\left.\partial_s\partial_t u\right|_{s=t=0},\qquad
 u_{h,qr}=\left.\partial_s\partial_t u_h\right|_{s=t=0}.
\]
Thus the derivatives of $e=u-u_h$ are
\[
 e_q=u_q-u_{h,q},\qquad e_r=u_r-u_{h,r},\qquad
 e_{qr}=u_{qr}-u_{h,qr}.
\]
Galerkin orthogonality gives the exact energy identity
\begin{equation}\label{eq:energy-error-identity}
       J(z)-J_h(z)=\frac12a_z(e_z,e_z).
\end{equation}
Differentiating twice, including the dependence of the bilinear form on
the polygon, gives the full product-rule expansion
\[
\begin{aligned}
 (J-J_h)_{qr}
 ={}&\frac12a_{qr}(e,e)+a_q(e_r,e)+a_r(e_q,e)\\
    &+a(e_q,e_r)+a(e_{qr},e).
\end{aligned}
\]
Since $V_h$ is fixed, $u_{h,qr}\in V_h$.  Galerkin orthogonality therefore
removes its contribution:
\[
 a(e_{qr},e)=a(u_{qr},e)-a(u_{h,qr},e)=a(u_{qr},e).
\]
To eliminate $u_{qr}$, differentiate the exact state equation twice and
test with $e$:
\[
 a(u_{qr},e)=\ell_{qr}(e)-a_q(u_r,e)-a_r(u_q,e)-a_{qr}(u,e).
\]
Substitute this into the product-rule expansion.  The terms containing
$a_q$ combine to $-a_q(u_{h,r},e)$ because $e_r-u_r=-u_{h,r}$;
the terms containing $a_r$ combine in the same way.  Since $u=u_h+e$,
\[
 \frac12a_{qr}(e,e)-a_{qr}(u,e)
 =-\frac12a_{qr}(e,e)-a_{qr}(u_h,e).
\]
The remaining load and discrete-state terms are exactly
$\mathcal R_{qr}^h(e)$ from \eqref{eq:second-lifting-functional}.  Hence
\begin{equation}\label{eq:second-residual-identity}
 (J-J_h)_{qr}=a(e_q,e_r)-\frac12a_{qr}(e,e)
                  +\mathcal R_{qr}^h(e).
\end{equation}
By \eqref{eq:second-lifting}, Galerkin orthogonality, and
$z_h\in V_h$,
\[
 \mathcal R_{qr}^h(e)=a(Z_{qr}^h,e)
 =a(Z_{qr}^h-z_h,e).
\]
Cauchy--Schwarz now gives \eqref{eq:second-residual-bound}.  In particular,
the unknown error in the second state derivative never has to be bounded
directly.
\end{proof}

\begin{remark}[Role of the second lifting]
The Hessian can be expressed using only the state and its first material
derivatives, as in Bogosel--Bucur~\cite{BogoselBucurValidated2024}.  In the
torsion problem, the variational equation gives
\[
 a(u_q,u_r)=\ell_q(u_r)-a_q(u,u_r).
\]
Thus a second lifting is not required to evaluate the Hessian, nor is it
intrinsic to validation by equilibrated fluxes.  However, the right-hand
side still depends on the unknown state $u$.  Replacing the Gram term by
this expression does not by itself eliminate the state approximation error.

Direct propagation of $O(h)$ energy-error bounds through the individual
Hessian terms generally gives only an $O(h)$ bound; see
Appendix~\ref{app:direct-bounds}.  The second lifting instead preserves
Galerkin cancellation: the remaining state-error term becomes
$a(Z_{qr}^h-z_h,u-u_h)$, and every term in
\eqref{eq:second-residual-bound} is a product of errors.  If
$\delta_0,\delta_q,\delta_r,\eta_{qr}=O(h)$, this gives an $O(h^2)$
Hessian bound.  The gain therefore comes from the residual identity and
the lifting, rather than from equilibrated fluxes alone.  Such fluxes
provide guaranteed energy-error bounds; quadratic decay of the computed
enclosure additionally requires their bounds to have the appropriate
rates and the algebraic errors to be controlled, as discussed in
Section~\ref{sec:rate}.  A certificate using only first material derivatives
remains possible, but may require sharper estimates or finer meshes.
The fixed-mesh validity of \eqref{eq:second-residual-bound} uses no
asymptotic rate assumption.
\end{remark}

\subsection{Account for approximate linear solves}\label{sec:algebraic-errors}

The preceding identities use exact solutions of finite-dimensional
systems.  A floating-point solver supplies candidates.  We distinguish:
\begin{center}
\begin{tabular}{ll}
\toprule
notation & meaning\\
\midrule
$u$ & exact continuous state\\
$u_h$ & exact solution of the finite-element equations\\
$\widetilde u$ & computed finite-element candidate\\
\bottomrule
\end{tabular}
\end{center}
The same distinction applies to the first derivatives.  For the lifting,
$z_h$ is the exact discrete solution and $\widetilde z$ is its candidate;
$Z_{qr}^h$ denotes the solution of the continuous problem with exact
discrete data.
All candidates have exactly zero boundary coefficients, so they belong to
$V_h$.  We must bound both the discretization error $u-u_h$ and the
algebraic error $u_h-\widetilde u$.

Let $Kx_*=f$ be a finite-element system with symmetric positive-definite
stiffness matrix $K$, and let $x$ be a computed coefficient vector.  If a
verified lower bound $\underline\alpha>0$ satisfies
$\lambda_{\min}(K)\geq\underline\alpha$, then
\begin{equation}\label{eq:algebraic-residual}
 \|x-x_*\|_2\leq\frac{\|Kx-f\|_2}{\underline\alpha},
 \qquad
 \|x-x_*\|_K\leq\frac{\|Kx-f\|_2}{\sqrt{\underline\alpha}}.
\end{equation}
Indeed, $K(x-x_*)=Kx-f$ and
$\|x-x_*\|_K^2=(Kx-f)^TK^{-1}(Kx-f)$.  Thus a residual and a lower
eigenvalue bound control the solve error without knowing $x_*$.
For our fan, an explicit $\underline\alpha$ follows from the Poincar\'e
inequality on the square and the finite-element mass matrix; the derivation
is in Appendix~\ref{app:assembly}.

Apply this argument to the four systems in
\eqref{eq:discrete-differentiated-systems}.  Their right-hand sides depend
on earlier solves, so their errors must be transferred in the same order:
state, first derivatives, then lifting.

Define the residual functionals
\begin{align}
 \rho_0&=\ell-a(\widetilde u,\cdot),\notag\\
 \rho_q&=\ell_q-a_q(\widetilde u,\cdot)
                  -a(\widetilde u_q,\cdot),\notag\\
 \rho_r&=\ell_r-a_r(\widetilde u,\cdot)
                  -a(\widetilde u_r,\cdot),\label{eq:four-algebraic-residuals}\\
 \rho_z&=\ell_{qr}-a_q(\widetilde u_r,\cdot)
        -a_r(\widetilde u_q,\cdot)-a_{qr}(\widetilde u,\cdot)
        -a(\widetilde z,\cdot).\notag
\end{align}
Evaluate each residual on the finite-element basis functions.  These values
form the residual vector used in the linear-system error estimate.
Explicitly, if $\varphi_1,\ldots,\varphi_N$ are the interior nodal basis
functions of $V_h$, then
\[
 (r_\rho)_i=\rho(\varphi_i),\qquad
 \rho(v_h)=\sum_{i=1}^N v_i\rho(\varphi_i)
 \quad\text{for }v_h=\sum_{i=1}^N v_i\varphi_i.
\]
For example, the state residual
$\rho_0(v_h)=\ell(v_h)-a(\widetilde u,v_h)$ has vector
\[
 r_{\rho_0}=f-K\widetilde x,
\]
where $\widetilde x$ contains the nodal coefficients of $\widetilde u$.
Thus evaluating a residual functional on $V_h$ is exactly the usual
linear-system residual calculation.

The residual's energy dual norm satisfies
\[
 \|\rho\|_{K^{-1}}=(r_\rho^TK^{-1}r_\rho)^{1/2}
 \leq\frac{\|r_\rho\|_2}{\sqrt{\underline\alpha}}.
\]
This bounds the residual of one linear system; errors in its input fields
must also be transferred.
The form bounds \eqref{eq:Cqr} transfer the state error to the two
first-derivative equations, giving
\begin{equation}\label{eq:algebraic-transfer}
 \epsilon_0=\frac{\|r_{\rho_0}\|_2}{\sqrt{\underline\alpha}},\qquad
 \epsilon_q=\frac{\|r_{\rho_q}\|_2}{\sqrt{\underline\alpha}}+C_q\epsilon_0,\qquad
 \epsilon_r=\frac{\|r_{\rho_r}\|_2}{\sqrt{\underline\alpha}}+C_r\epsilon_0.
\end{equation}

Thus $\epsilon_0,\epsilon_q,\epsilon_r$ bound
$\|u_h-\widetilde u\|_a$, $\|u_{h,q}-\widetilde u_q\|_a$, and
$\|u_{h,r}-\widetilde u_r\|_a$, respectively.

The Hessian center can also be improved using the residuals already
computed.  For the state energy, stationarity makes the corrected energy
error quadratic.  The next proposition gives the corresponding Hessian
correction: it cancels the terms linear in the first-derivative solve
errors.  The lifting residual is paired with the state error.

\begin{proposition}[residual-corrected discrete centers]
\label{prop:corrected-centers}
Define the corrected Hessian and energy centers
\begin{align*}
 \widetilde C_{\rm corr}={}&\ell_{qr}(\widetilde u)+\ell_q(\widetilde u_r)
 -\frac12a_{qr}(\widetilde u,\widetilde u)
 -a_q(\widetilde u,\widetilde u_r)\\
 &\quad+\rho_r(\widetilde u_q)+\rho_0(\widetilde z),\\
 \widetilde J_{\rm corr}={}&\ell(\widetilde u)
       -\frac12a(\widetilde u,\widetilde u).
\end{align*}
Then the exact discrete quantities satisfy
\begin{align}
 J_{h,qr}-\widetilde C_{\rm corr}
  & =\rho_z(u_h-\widetilde u)
      -\frac12a_{qr}(u_h-\widetilde u,u_h-\widetilde u)\notag\\
  &\quad+a(u_{h,q}-\widetilde u_q,u_{h,r}-\widetilde u_r),
                                                        \label{eq:center-identity}\\
 0\leq J_h-\widetilde J_{\rm corr}
  & =\frac12\|u_h-\widetilde u\|_a^2.                 \label{eq:J-center-identity}
\end{align}
Consequently
\begin{equation}\label{eq:center-algebraic-bound}
\begin{aligned}
 |J_{h,qr}-\widetilde C_{\rm corr}|
 &\leq E_{\rm alg}:=\frac{\|r_{\rho_z}\|_2}{\sqrt{\underline\alpha}}\epsilon_0
       +\frac12C_{qr}\epsilon_0^2+\epsilon_q\epsilon_r,
 \\
 |J_h-\widetilde J_{\rm corr}|&\leq\frac12\epsilon_0^2.
\end{aligned}
\end{equation}
\end{proposition}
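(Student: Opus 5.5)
The plan is to write the exact discrete mixed derivative $J_{h,qr}$ as a fixed expression in $(u_h,u_{h,q},u_{h,r},z_h)$, evaluate the same expression at the candidates, and expand the difference in the algebraic errors
\[
 e_0=u_h-\widetilde u,\qquad e_q=u_{h,q}-\widetilde u_q,\qquad e_r=u_{h,r}-\widetilde u_r,\qquad e_z=z_h-\widetilde z .
\]
Every term linear in these errors should then be recognised as one of the residual functionals \eqref{eq:four-algebraic-residuals}. The key inputs are as follows.
\begin{itemize}
\item On $V_h$ the exact discrete solutions satisfy \eqref{eq:discrete-differentiated-systems}. Hence, for $v\in V_h$,
\begin{align*}
 \rho_0(v)&=a(e_0,v),\\
 \rho_r(v)&=a(e_r,v)+a_r(e_0,v),\\
 \rho_z(v)&=a(e_z,v)+a_q(e_r,v)+a_r(e_q,v)+a_{qr}(e_0,v).
\end{align*}
\item The forms $a_q,a_r,a_{qr}$ are symmetric, because the matrices $\mathbb A_q,\mathbb A_{qr}$ are symmetric.
\end{itemize}

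\textbf{Energy identity.} First I would prove \eqref{eq:J-center-identity}, since it is one line. Write $J_h=\ell(u_h)-\frac12a(u_h,u_h)$, which is valid because $a(u_h,u_h)=\ell(u_h)$. Subtract $\widetilde J_{\rm corr}$ and use $\ell(e_0)=a(u_h,e_0)$. Everything collapses to $\frac12a(e_0,e_0)$.

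\textbf{Hessian formula.} I would differentiate the pulled-back discrete energy $J_h(s,t)=\ell_{s,t}(u_h)-\frac12a_{s,t}(u_h,u_h)$. Stationarity of $v\mapsto\ell(v)-\frac12a(v,v)$ at $u_h$ on $V_h$ removes the state derivative at first order. This gives $J_{h,q}=\ell_q(u_h)-\frac12a_q(u_h,u_h)$. Differentiating in $t$ then gives
\[
 J_{h,qr}=\ell_{qr}(u_h)+\ell_q(u_{h,r})-\tfrac12a_{qr}(u_h,u_h)-a_q(u_h,u_{h,r}).
\]

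\textbf{Expansion.} Subtract the same expression evaluated at $(\widetilde u,\widetilde u_r)$. The difference has:
\begin{itemize}
\item linear terms $\ell_{qr}(e_0)+\ell_q(e_r)-a_{qr}(\widetilde u,e_0)-a_q(e_0,\widetilde u_r)-a_q(\widetilde u,e_r)$;
\item quadratic terms $-\frac12a_{qr}(e_0,e_0)-a_q(e_0,e_r)$.
\end{itemize}
Now use the $q$-equation with test function $e_r\in V_h$, namely $\ell_q(e_r)=a(u_{h,q},e_r)+a_q(u_h,e_r)$. This turns $\ell_q(e_r)-a_q(\widetilde u,e_r)$ into $a(u_{h,q},e_r)+a_q(e_0,e_r)$, which cancels the quadratic $a_q$ term. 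Next split $a(u_{h,q},e_r)=a(\widetilde u_q,e_r)+a(e_q,e_r)$ and use $a(\widetilde u_q,e_r)=\rho_r(\widetilde u_q)-a_r(e_0,\widetilde u_q)$. The terms that remain linear in $e_0$ are
\[
 \ell_{qr}(e_0)-a_{qr}(\widetilde u,e_0)-a_q(\widetilde u_r,e_0)-a_r(\widetilde u_q,e_0)=\rho_z(e_0)+a(\widetilde z,e_0),
\]
and $a(\widetilde z,e_0)=\rho_0(\widetilde z)$. Collecting terms gives exactly \eqref{eq:center-identity}.

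\textbf{Bound.} For \eqref{eq:center-algebraic-bound} I would apply Cauchy--Schwarz term by term.
\begin{itemize}
\item $|\rho_z(e_0)|\leq\|\rho_z\|_{K^{-1}}\|e_0\|_a\leq(\|r_{\rho_z}\|_2/\sqrt{\underline\alpha})\,\epsilon_0$.
\item $|a_{qr}(e_0,e_0)|\leq C_{qr}\epsilon_0^2$, by \eqref{eq:Cqr}.
\item $|a(e_q,e_r)|\leq\epsilon_q\epsilon_r$, using \eqref{eq:algebraic-transfer}.
\end{itemize}
The energy bound $|J_h-\widetilde J_{\rm corr}|\leq\frac12\epsilon_0^2$ is immediate from \eqref{eq:J-center-identity}.

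\textbf{Main obstacle.} The only delicate step is the sign and symmetry bookkeeping in the expansion. In particular one must check that the $a_q(e_0,e_r)$ terms cancel, and that the $a_r(e_0,\widetilde u_q)$ term from $\rho_r$ is absorbed into $\rho_z(e_0)$. That absorption uses the symmetry of $a_r$ and the fact that all errors lie in $V_h$, so the discrete equations may be tested with them.
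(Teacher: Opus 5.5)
Your proposal is correct and follows essentially the paper's argument: you expand the discrete mixed energy derivative about the candidates, rewrite every term linear in the errors through the residual identities (using $e_0,e_r,\widetilde u_q,\widetilde z\in V_h$ and the symmetry of $a_q,a_r,a_{qr}$), and finish with Cauchy--Schwarz and \eqref{eq:algebraic-transfer}. The only difference is bookkeeping: you first use stationarity to reduce $J_{h,qr}$ to $\ell_{qr}(u_h)+\ell_q(u_{h,r})-\frac12a_{qr}(u_h,u_h)-a_q(u_h,u_{h,r})$ and recover $\rho_r(\widetilde u_q)+\rho_0(\widetilde z)$ at the end, whereas the paper expands the full four-field second derivative of $\ell_{s,t}(v)-\frac12a_{s,t}(v,v)$, which at the candidates is exactly $\widetilde C_{\rm corr}$.
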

\begin{proof}
The residual definitions and \eqref{eq:discrete-differentiated-systems}
give, for every $v_h$,
\begin{align*}
 a(u_h-\widetilde u,v_h)&=\rho_0(v_h),\\
 a(u_{h,q}-\widetilde u_q,v_h)&=\rho_q(v_h)-a_q(u_h-\widetilde u,v_h),\\
 a(u_{h,r}-\widetilde u_r,v_h)&=\rho_r(v_h)-a_r(u_h-\widetilde u,v_h),\\
 a(z_h-\widetilde z,v_h)&=\rho_z(v_h)-a_q(u_{h,r}-\widetilde u_r,v_h)\\
 &\quad-a_r(u_{h,q}-\widetilde u_q,v_h)-a_{qr}(u_h-\widetilde u,v_h).
\end{align*}
Expand the differentiated discrete energy about the four candidates, use
these identities with $v_h=u_h-\widetilde u$, $u_{h,q}-\widetilde u_q$,
and $u_{h,r}-\widetilde u_r$, and use symmetry of the differentiated forms.
All terms linear in the first-derivative solve errors cancel, leaving
\eqref{eq:center-identity}.
Equation~\eqref{eq:J-center-identity} is the usual stationary energy
identity.  Cauchy--Schwarz, \eqref{eq:Cqr}, and
\eqref{eq:algebraic-transfer} give \eqref{eq:center-algebraic-bound}.
\end{proof}

\subsection{Assemble the final enclosure}\label{sec:entry-enclosure}

The remaining task is to supply the four energy bounds in
Proposition~\ref{prop:second-residual} from the computed candidates.
Let $\Phi_0,\Phi_q,\Phi_r,\Phi_z$ denote their flux mismatches, defined
explicitly in Appendix~\ref{app:fluxes}.  Each is an $L^2$ norm measuring
how far a candidate gradient, with the differentiated source terms, is
from an exactly equilibrated flux.  We use these norms to bound the state
and first-derivative discretization errors and the lifting error in
Proposition~\ref{prop:second-residual}.  Integration by parts, as in
Section~\ref{sec:residual-principle}, and the algebraic bounds above give
\begin{align}
 \widehat\delta_0&=\Phi_0,&
 \widehat\delta_q&=\Phi_q+C_q\Phi_0+\epsilon_q,&
 \widehat\delta_r&=\Phi_r+C_r\Phi_0+\epsilon_r,                    \label{eq:combined-deltas}\\
 \widehat\eta_{qr}&=\Phi_z+C_q\epsilon_r+C_r\epsilon_q
                          +C_{qr}\epsilon_0.\notag
\end{align}
The state mismatch $\Phi_0$ bounds $\|u-\widetilde u\|_a$, so Galerkin best
approximation gives $\|u-u_h\|_a\leq\Phi_0$.  The quantity $\Phi_q+C_q\Phi_0$ bounds
$\|u_q-\widetilde u_q\|_a$: its second term transfers the flux residual from
the candidate state $\widetilde u$ to the exact state $u$.  Separately,
$\epsilon_q$ bounds $\|u_{h,q}-\widetilde u_q\|_a$ from the exact interior
linear-system residual.  The triangle inequality gives
$\widehat\delta_q$, and the argument for $r$ is identical.  Finally,
$\widehat\eta_{qr}$ bounds $\|Z_{qr}^h-\widetilde z\|_a$ after transferring
all three discrete-data errors.  Since $\widetilde z\in V_h$, Galerkin best
approximation yields
\[
 \|Z_{qr}^h-z_h\|_a
 \leq\|Z_{qr}^h-\widetilde z\|_a\leq\widehat\eta_{qr}.
\]
The lifting residual enters $E_{\rm alg}$ through
$\|r_{\rho_z}\|_2\epsilon_0/\sqrt{\underline\alpha}$ in
\eqref{eq:center-algebraic-bound}.  Therefore
\begin{equation}\label{eq:combined-BJ}
 B_J=\widehat\delta_q\widehat\delta_r
 +\frac12C_{qr}\widehat\delta_0^2
 +\widehat\delta_0\widehat\eta_{qr}
\end{equation}
is a continuous discretization radius for $J_{qr}-J_{h,qr}$.

Let $A_q=DA(P_n)[q]$ and
$A_{qr}=D^2A(P_n)[q,r]$ denote derivatives of polygonal area, as
distinct from the coefficient matrices \eqref{eq:Aq-pullback}--
\eqref{eq:Aqr-pullback}.  For the nonzero Fourier modes used below,
$A_q=A_r=0$ exactly.  Combining
Propositions~\ref{prop:corrected-centers} and~\ref{prop:second-residual}, the
center
\begin{equation}\label{eq:corrected-F-center}
 \widetilde F_{qr}=\frac{\widetilde C_{\rm corr}}{A^2}
       -\frac{2\widetilde J_{\rm corr}A_{qr}}{A^3}
\end{equation}
approximates the continuous scale-Hessian entry with the guaranteed error
\begin{equation}\label{eq:complete-F-radius}
 R_F=\frac{B_J+E_{\rm alg}}{A^2}
 +\frac{|A_{qr}|}{A^3}
       \bigl(\widehat\delta_0^2+\epsilon_0^2\bigr).
\end{equation}
Thus the conclusion for the chosen directions is
\[
 \big|D^2\F(P_n)[q,r]-\widetilde F_{qr}\big|\leq R_F.
\]
The first term of \eqref{eq:complete-F-radius} transfers the continuous and
algebraic errors in $J_{qr}$ through the factor $A^{-2}$.  The second
transfers the error in $J$ through the area correction.  The polygonal area
and its derivatives come directly from the vertices.

Every quantity in these formulas is evaluated with outward rounding.
If the computed center is itself a ball, its radius is retained when
enlarging it by $R_F$.  This accounts for arithmetic error in the same
enclosure as the PDE and linear-system errors.

Returning to $\alpha_2$ in the pentagon, the corrected enclosure is
contained in the stored-center interval displayed in
Section~\ref{sec:certificate-target}.  Its radius is about $4.74\,10^{-4}$,
while the magnitude of the center is about $1.47\,10^{-2}$.
The continuous discretization error accounts for almost all of this
radius; the algebraic contribution is below $1.1\,10^{-25}$.
To complete the example, take the complete pentagon block $B_2$ in
\eqref{eq:symbol-entries}.  Rounding the stored centers to the digits below
and using the common outward radius $4.9\,10^{-4}$ gives
\[
\begin{aligned}
 |\alpha_2-(-0.0147312069261)|&\leq4.9\,10^{-4},\\
 |\beta_2-(-0.00994311747728)|&\leq4.9\,10^{-4},\\
 |\gamma_2-0.00152976372116|&\leq4.9\,10^{-4}.
\end{aligned}
\]
The real part of the off-diagonal is zero by reflection; retaining an
interval of radius $4.9\,10^{-4}$ around zero also gives a valid enclosure.
Using all four real entry intervals in the Hermitian eigenvalue formula
and rounding the resulting endpoints outward yields
\[
 \mu_2^-\in[-0.0164,-0.0137],\qquad
 \mu_2^+\in[-0.0110,-0.0082].
\]
Both upper endpoints are strictly negative.  Conjugacy gives the same two
enclosures for $B_3$, accounting for four negative eigenvalues.  The
negative trace in each of $B_1,B_4$ supplies the other two, while the
symmetry argument supplies the four exact zeros.  Section~\ref{sec:pentagon}
gives the complete pentagon certificate.

The entry radii are computed from the residual bounds before the spectral
sign test.  They are not chosen as fractions of the computed eigenvalues.
They vary with the mode because the displacement fields and their
error bounds vary with the mode; the eigenvalue formula then propagates
these entry uncertainties.  The corrected centers follow the fixed
formula \eqref{eq:corrected-F-center}, independently of the outcome of the
sign test.  A branch is certified negative only if its entire final
interval lies below zero.

Similarity is used exactly: $B_0=0$ and the translation branch in each of
$k=1,n-1$ is zero.  Only the trace must be certified negative in these
translation modes.  Conjugate modes share their diagonal entries and have
opposite off-diagonal imaginary parts, as in
\eqref{eq:conjugate-symbols}.

\subsection{How the implementation verifies the hypotheses}\label{sec:certificate-implementation}

FreeFEM~\cite{Hecht2012} computes candidate fields and flux potentials.
The verifier uses FLINT/Arb~\cite{Hart2010,Johansson2017} to enclose the
residuals and every subsequent arithmetic operation; see
\cite{Rump2010} for the principles of outward verification.
The checks follow the mathematical argument:
\begin{enumerate}
\item Reconstruct the exact regular fan and normalized Fourier directions,
verify the supplied mesh incidence and coordinate containment, and impose
the exact zero boundary coefficients.  This identifies the domain, space,
and directions to which the certificate applies.
\item Assemble the four interior systems, bound their residuals, and
integrate the flux mismatches outwardly.  These calculations supply
\eqref{eq:algebraic-transfer} and \eqref{eq:combined-deltas}.
\item Evaluate the corrected center and full radius
\eqref{eq:corrected-F-center}--\eqref{eq:complete-F-radius}.  Prove that
this entry enclosure lies in the stored-center ball passed to the
eigenvalue calculation, including the conjugate-mode counterpart.
\item Evaluate the eigenvalue intervals, insert the exact similarity
zeros, and check the complete count of $2n-4$ negative branches and four
zeros.
\end{enumerate}
The entry verifier is \texttt{second\_lifting\_cert.c}; the final symbol
test is \texttt{mode\_cert.c}.  The drivers use the same declared radii and
192-bit arithmetic for entry containment and eigenvalue evaluation.
Section~\ref{sec:reproducibility} gives the commands, source map, and
archived outputs.  Appendix~\ref{app:assembly} records the element and
connectivity details needed to audit the assembly.

\section{Finite-element rate: regularity, proof, and experiment}
\label{sec:rate}

We prove quadratic convergence of the exact Galerkin Hessian at each
fixed regular polygon.  The main issue is regularity at the endpoints of
the fan rays. First material derivatives have gradient jumps across the
rays, and second derivatives require a compatibility argument at the center.
The proof follows the ray decomposition of
Bogosel--Bucur~\cite[Section~3]{BogoselBucurValidated2024}, with a positive
Sobolev shift and an explicit lifting of the second-order center jumps.
All constants in this section may depend on the fixed polygon and mesh
shape regularity.  No uniformity in $n$ or in a neighborhood of perturbed
polygons is asserted.

\subsection{A ray source and its endpoint compatibility}

Write $\Gamma_j=[0,a_j]$ for a coarse fan ray.  Choose
\begin{equation}\label{eq:regularity-shift}
 0<\eta<\min\left\{\frac12,\frac{2}{n-2}\right\}.
\end{equation}
The Dirichlet corner exponent of $P_n$ is $n/(n-2)$.
Thus the polygonal Dirichlet shift theorem
\cite[Chapter~4]{Grisvard1985} maps an $H^\eta(P_n)$ right-hand side
to an $H^{2+\eta}(P_n)$ solution.  The strict inequality in
\eqref{eq:regularity-shift} stays below the first corner singularity.
We also use two standard fractional Sobolev extension facts: extension
by zero across a Lipschitz interface preserves $H^\eta$ for
$0<\eta<1/2$, and a function in $H^{1/2+\eta}$ on an interval extends
by zero in that space when both endpoint traces vanish.

\begin{lemma}[a ray source with zero endpoint values]\label{lem:ray-shift}
Let $g\in H^{1/2+\eta}(\Gamma_j)$ satisfy $g(0)=g(a_j)=0$.
The weak solution $w\in H^1_0(P_n)$ of
\[
 a(w,v)=\int_{\Gamma_j}gv\qquad(v\in H^1_0(P_n))
\]
belongs to $H^{2+\eta}(T_i)$ on every coarse fan triangle, with
\[
 \sum_i\|w\|_{H^{2+\eta}(T_i)}
 \leq C\|g\|_{H^{1/2+\eta}(\Gamma_j)}.
\]
\end{lemma}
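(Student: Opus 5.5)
We prove the lemma by subtracting an explicit piecewise-smooth lifting that carries the ray source as a jump in the normal derivative. The remainder then solves a Dirichlet problem with $H^\eta$ data, to which the polygonal shift theorem applies.

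\emph{Step 1: choose a lifting with the prescribed flux jump.} Rotate so that $\Gamma_j$ lies on the positive $x$-axis, with $a_j=(1,0)$. The source $\int_{\Gamma_j}gv$ is equivalent to the interface condition $[\partial_\nu w]=-g$ on $\Gamma_j$, with $w$ continuous across the ray. Since $g(0)=g(1)=0$ and $g\in H^{1/2+\eta}(0,1)$, the extension-by-zero fact gives $\tilde g\in H^{1/2+\eta}(\R)$ with $\|\tilde g\|\le C\|g\|$.

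Fix a cutoff $\chi\in C_c^\infty(\R)$ with $\chi=1$ near $0$. Set
\[
 L(x,y)=-\tfrac12|y|\,(E\tilde g)(x,|y|)\chi(y),
\]
where $E$ is a harmonic-type (Poisson or mollifier) extension of $\tilde g$ to the upper half plane. Then $E\tilde g\in H^{1+\eta}$ locally, and on each side $L\in H^{2+\eta}$, because multiplication by $y$ gains one derivative in the standard half-space trace-lifting estimates. The function $L$ is continuous across $y=0$ with $L=0$ there, and its normal derivative jumps by exactly $-\tilde g$ along the whole $x$-axis.

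The restriction of the jump to $\Gamma_j$ is what the zero endpoint values are for: $\tilde g$ vanishes on the axis outside $[0,1]$, so $L$ has no spurious jump on $\{x<0\}$ or $\{x>1\}$. Finally, multiply $L$ by a smooth cutoff $\psi$ equal to one near $\Gamma_j$ and supported in $T_{j-1}\cup T_j\cup B_\varepsilon(0)\cup B_\varepsilon(a_j)$.

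\emph{Step 2: handle the endpoints and the boundary condition.} Near $a_j$, which is a boundary vertex of $P_n$, the lifting $\psi L$ need not vanish on $\partial P_n$. This step is the main obstacle. The first thing to try is to choose $E$ so that $E\tilde g(x,t)$ is supported in a cone $\{|x-x_0|\lesssim t\}$ over $\operatorname{supp}\tilde g$, for instance by the mollifier construction $E\tilde g(x,t)=(\rho_t*\tilde g)(x)\,\omega(t)$. The trace of $L$ on the two polygon sides meeting at $a_j$ is then controlled by the values of $\tilde g$ near $x=1$.

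If this control is insufficient, add a correction $L'$ that solves the Dirichlet problem for this boundary trace. The trace lies in $H^{3/2+\eta}$ on each side and vanishes at the corner, so $L'$ stays in $H^{2+\eta}$ below the corner exponent $n/(n-2)$. The center is interior and causes no boundary issue. There the multiplier $|y|$ and the cone-supported extension keep $L$ in $H^{2+\eta}$ on each of the two sectors, with no further compatibility condition because $g(0)=0$.

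\emph{Step 3: apply the shift theorem to the remainder.} Set $\tilde w=w-\psi L-L'\in H^1_0(P_n)$. By construction the ray source cancels, and $\tilde w$ solves $-\Delta\tilde w=f$ weakly on $P_n$. Here $f$ is the sectorwise Laplacian of $\psi L+L'$, and it belongs to $H^\eta(T_i)$ on each sector.

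Since $\eta<1/2$, extension by zero across the fan rays gives $f\in H^\eta(P_n)$ with norm bounded by $C\|g\|_{H^{1/2+\eta}}$. The restriction $\eta<2/(n-2)$ in \eqref{eq:regularity-shift} keeps us below the corner singularity, so the polygonal shift theorem \cite[Chapter~4]{Grisvard1985} gives $\tilde w\in H^{2+\eta}(P_n)$ with the corresponding bound. Adding back the sectorwise $H^{2+\eta}$ function $\psi L+L'$ proves the estimate on every $T_i$.
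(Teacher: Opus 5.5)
Your proof is correct. It shares the paper's overall architecture: lift the ray jump, subtract, and apply the Dirichlet shift to an $H^\eta$ right-hand side. The lifting itself is built differently.

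\textbf{The paper's route.} It uses the reflection symmetry of $P_n$ in the line containing $\Gamma_j$. By uniqueness $w$ is even, so on a half-polygon it solves a mixed problem with Neumann datum $g/2$ on the cut, zero-extended through the center. A single polygonal trace-lifting theorem (Bernardi--Dauge--Maday) then gives $v_0\in H^{2+\eta}$ with zero value on the whole boundary, normal derivative $g/2$ on the loaded cut, and zero normal derivative elsewhere. The endpoint hypotheses enter through the zero extension at the center and through gradient agreement at the corner $a_j$. Even reflection of $w-v_0$ then yields the Dirichlet problem.

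\textbf{Your route.} You construct the jump lifting directly in a half-plane chart, as the evenly reflected normal-derivative trace lifting $-\tfrac12|y|\,E\tilde g$. You then repair the outer boundary at $a_j$ with a separate Dirichlet lifting. This uses no symmetry of $P_n$. It also actually needs only $g(0)=0$. Beyond $a_j$ the axis lies outside $P_n$, so any $H^{1/2+\eta}$ extension of $g$ past $a_j$ would do. And the Dirichlet compatibility at $a_j$ is only agreement of the two one-sided values, both $0$ because of the factor $|y|$. The paper's route is shorter because one cited theorem handles the lifting and all corner conditions at once.

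\textbf{Points to tighten.}
\begin{itemize}
\item The bound $y\,E\tilde g\in H^{2+\eta}$ holds for the Poisson or scale-$y$ mollifier extension, i.e.\ the standard normal-derivative trace lifting. It does not follow from multiplying a generic $H^{1+\eta}$ extension by $y$.
\item The cone-support idea in Step~2 cannot make the trace vanish, since $\operatorname{supp}\tilde g$ reaches $x=1$. The correction $L'$ is what does the work, and its boundary trace must be $-\psi L$, not $\psi L$.
\item You should say why $L'\in H^{2+\eta}$. For Dirichlet data on two sides of a non-flat corner, the first-order compatibility condition is always solvable, so only value agreement is needed.
\item For odd $n$, the opposite half-line passes through the interior of a fan triangle. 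State explicitly that the vanishing jump there, together with $\eta<1/2$, lets the two halves of $\psi L$ glue into $H^{2+\eta}$ on that triangle. The paper treats this case explicitly.
\end{itemize}
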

\begin{proof}
The trace theorem makes the load continuous on $H^1_0(P_n)$.
Reflection in the line containing $\Gamma_j$ preserves the polygon and
the load, so uniqueness makes $w$ even.  On either half-polygon it solves
the mixed problem with zero Dirichlet data on the outer sides, outward
Neumann data $g/2$ on $\Gamma_j$, and zero Neumann data on the rest of
the cut.  The factor $1/2$ follows by doubling the weak integral on one
half-polygon.  Zero extension through the center gives
$H^{1/2+\eta}$ data on the whole cut.

Here is a reduction that verifies the positive shift, including the
mixed corners.  Prescribe a lifting $v_0$ with zero value on the entire
boundary of the half-polygon, normal derivative $g/2$ on the loaded cut,
and zero normal derivative on all remaining sides.  The polygonal trace
lifting theorem of Bernardi--Dauge--Maday
\cite[Theorem~1]{BernardiDaugeMaday2000}, with $p=2$, $s=2+\eta$, and
two traces, gives $v_0\in H^{2+\eta}$ with a bounded norm.  Its corner
conditions require agreement of the value and gradient: the prescribed
value is zero, and both one-sided gradients are zero at each corner
because the Neumann data vanish at the cut endpoints.  There is no
second-derivative compatibility condition since $2+\eta<5/2$.

Now $w-v_0$ has homogeneous Neumann data on the cut and homogeneous
Dirichlet data on the outer sides.  Its even reflection is a weak
Dirichlet solution on $P_n$, with right-hand side equal to the even
reflection of $\Delta v_0$.  This right-hand side belongs to $H^\eta$
because $\eta<1/2$.  The Dirichlet shift above gives $H^{2+\eta}$
regularity on $P_n$.  Adding $v_0$ proves the asserted regularity on
each half-polygon.

For even $n$, both endpoints of the cut are polygon vertices.  For odd
$n$, the other endpoint is an edge midpoint; the data are zero near it,
and the same reflection gives a straight Dirichlet boundary there.
The negative part of the cut can pass through a fan triangle when $n$
is odd.  It introduces no additional interface: $w$ is harmonic across
it away from the center, so its first-derivative traces agree.  The
piecewise $H^\eta$ second derivatives then glue across that cut for
$\eta<1/2$.  This proves $H^{2+\eta}$ regularity on each original fan
triangle as well as the norm bound.
\end{proof}

\subsection{First and second material derivatives}

\begin{proposition}[broken regularity at the regular polygon]
\label{prop:broken-regularity}
For fixed $n\geq5$, the state satisfies $u\in H^{2+\eta}(P_n)$.
For any vertex directions $q,r$, its exact material derivatives satisfy
\[
 u_q|_{T_j}\in H^{2+\eta}(T_j),\qquad
 u_{qr}|_{T_j}\in H^2(T_j).
\]
Their broken norms are bounded uniformly for $|q|,|r|\leq1$.
The first-derivative gradients have a common value at the center and
vanish at every polygon vertex.
\end{proposition}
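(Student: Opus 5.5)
The plan is to read each material derivative as the solution of a Dirichlet problem on $P_n$. Its source is piecewise $H^\eta$ on the coarse fan, plus single-layer loads on the rays $\Gamma_j$ coming from the piecewise-constant coefficients $\mathbb A_q,\mathbb A_{qr}$. The volume part is handled by the polygonal Dirichlet shift, and each ray load by Lemma~\ref{lem:ray-shift}.

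\textbf{Step 1: the state.} The load $1$ lies in $H^\eta(P_n)$. By \eqref{eq:regularity-shift}, the shift theorem of \cite{Grisvard1985} gives $u\in H^{2+\eta}(P_n)\subset C^1(\overline{P_n})$. Two pointwise facts follow. First, $\nabla u(a_j)=0$ at every vertex, because $u$ vanishes on the two nonparallel sides meeting there. Second, $\nabla u(0)=0$, because $u$ is invariant under $R_1$ while $R_1\nabla u(0)=\nabla u(0)$ forces the gradient to vanish.

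\textbf{Step 2: first derivatives.} By the differentiated state equation, $a(u_q,v)=\ell_q(v)-a_q(u,v)$. Here $\operatorname{div}\theta_q$ and $\mathbb A_q$ are constant on each $T_i$. Integrating $a_q(u,v)$ by parts on each $T_i$ (the boundary terms on $\partial P_n$ vanish since $v=0$) gives three kinds of terms:
\begin{itemize}
\item a volume source $\operatorname{div}\theta_q+\operatorname{div}(\mathbb A_q\nabla u)$, which is $H^\eta$ on each sector;
\item ray loads $g_j=[\mathbb A_q\nabla u\cdot\nu_j]$, taken as jumps across $\Gamma_j$;
\item no further interface terms.
\end{itemize}
Extending the sector pieces by zero keeps the volume source in $H^\eta(P_n)$, since $\eta<1/2$, so that part is in $H^{2+\eta}(P_n)$ by the shift theorem. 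Each $g_j$ is a difference of traces of $H^{1+\eta}$ fields, hence lies in $H^{1/2+\eta}(\Gamma_j)$. It vanishes at both endpoints because $\nabla u(0)=\nabla u(a_j)=0$ by Step~1. Lemma~\ref{lem:ray-shift} then applies, and summing over $j$ gives $u_q|_{T_j}\in H^{2+\eta}(T_j)$, with bounds linear in $|q|$.

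The pointwise claims come from the broken $C^1$ regularity this provides. At a vertex, $u_q\in H^1_0$ vanishes on both adjacent sides, so each one-sided gradient is zero. At the center, continuity of $u_q$ across each ray equates the tangential derivatives. The normal-derivative jump of $u_q$ equals $g_j(0)$ up to sign, which is $0$, so all sector gradients coincide at $0$.

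\textbf{Step 3: second derivatives.} The equation for $u_{qr}$ has right-hand side
$\ell_{qr}-a_q(u_r,\cdot)-a_r(u_q,\cdot)-a_{qr}(u,\cdot)$. Integrating by parts on each sector gives piecewise $H^\eta$ volume terms and ray loads of the form $[\mathbb A_q\nabla u_r\cdot\nu_j]+[\mathbb A_r\nabla u_q\cdot\nu_j]+[\mathbb A_{qr}\nabla u\cdot\nu_j]$. In addition, $\mathbb A_q\nabla u_r$ now jumps because $\nabla u_r$ itself jumps across the rays. These loads lie in $H^{1/2+\eta}(\Gamma_j)$ and vanish at the vertex ends, by Step~2. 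At the center, however, they take values $c_j=[\mathbb A_q]\nabla u_r(0)\cdot\nu_j+\dots$ that are generally nonzero, since $\nabla u_q(0)$ need not vanish. This is why only $H^2$ is claimed.

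I would remove these center values by an explicit lifting. Let $w_0=\chi(|x|)\,p(x)$, where $\chi$ is a smooth cutoff and $p$ is continuous and piecewise linear on the fan, with gradient jumps $c_j\nu_j$ across $\Gamma_j$. Then $-\Delta w_0$ consists of ray loads equal to $c_j\chi$ plus smooth bounded volume terms. After subtracting $w_0$, Lemma~\ref{lem:ray-shift} and the shift theorem handle the remainder, and $w_0$ is itself piecewise smooth. Together these give $u_{qr}|_{T_j}\in H^2(T_j)$, and tracking the constants gives the uniform broken bounds.

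\textbf{Main obstacle.} The hardest step is the existence of $p$. Going once around the center, the gradient must return to itself, which requires $\sum_j c_j\nu_j=0$. I would first try to derive this by testing the $u_{qr}$ equation against $v=\chi\,\ell$ for linear functions $\ell$ and shrinking the cutoff. This should reduce the condition to the fact, from Step~2, that $\nabla u_q,\nabla u_r$ have a common center value, together with $\operatorname{div}$-free identities satisfied by $\mathbb A_q$. If that argument fails, I would instead allow $p$ to be a piecewise homogeneous function of degree one plus a $|x|\log|x|$-type corrector in one sector. Such a corrector still belongs to $H^2$ near $0$ with $\eta=0$, which suffices for the stated $H^2$ conclusion.
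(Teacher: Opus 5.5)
Your architecture is the same as the paper's: the Dirichlet shift for the piecewise $H^\eta$ bulk, Lemma~\ref{lem:ray-shift} for ray loads vanishing at both ends, and subtraction of $\chi p$ with $p$ continuous and piecewise linear to remove the center values of the second-order ray loads. The gap is the step you flag as the main obstacle. You leave the existence of $p$ open, and neither of your two routes closes it.

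The missing idea is that the center data $c_j$ are, by construction, the normal-derivative jumps of an explicit continuous piecewise-affine function built from the deformation fields:
\[
 p(x)=\theta_q(x)\cdot\nabla u_r(0)+\theta_r(x)\cdot\nabla u_q(0).
\]
To see this, use two facts.
\begin{enumerate}
\item Continuity of $\theta_q$ gives $[D\theta_q]_j=b_{q,j}\otimes\nu_j$, hence $[\nabla p]_j=\bigl(b_{q,j}\cdot\nabla u_r(0)+b_{r,j}\cdot\nabla u_q(0)\bigr)\nu_j$.
\item The formula $\mathbb A_q=(\operatorname{div}\theta_q)I-D\theta_q-D\theta_q^T$ gives $\nu_j\cdot[\mathbb A_q]_jw=-b_{q,j}\cdot w$.
\end{enumerate}
Hence $[\nabla p]_j=-c_j\nu_j$, which is the jump you need up to your sign convention for ray loads. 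Equivalently, your compatibility condition holds automatically:
\[
 \sum_j c_j\nu_j=-\Bigl(\sum_j[D\theta_q]_j\Bigr)^T\nabla u_r(0)
 -\Bigl(\sum_j[D\theta_r]_j\Bigr)^T\nabla u_q(0)=0 .
\]
This holds because the gradient jumps of a continuous piecewise-affine field telescope around the center. This $p$ gives exactly the paper's lifting $L=\chi p$. The identity is algebraic: it uses only the coefficient jumps and the common center values from your Step~2. Testing the $u_{qr}$ equation with $\chi\ell$ is not needed. It is also unclear how that limit would extract the identity without already knowing how $u_{qr}$ behaves at the center.

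Your fallback fails.
\begin{itemize}
\item A degree-one homogeneous function that is not linear on some sector has second derivatives that are homogeneous of degree $-1$ and not identically zero there.
\item The function $|x|\log|x|$ satisfies $|D^2(|x|\log|x|)|\geq|x|^{-1}$.
\end{itemize}
Since $\int_0 r^{-2}\,r\,\mathrm dr=\infty$, neither correction lies in $H^2$ near the origin. In fact, if $\sum_jc_j\nu_j\neq0$, the solution would in general contain an $r\log r$ resonance term and would not be broken $H^2$. So compatibility must be proved, not bypassed.

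There is also a smaller slip in Step~2. At a vertex $a_j$, each of $T_{j-1}$ and $T_j$ touches only one Dirichlet side. So $u_q=0$ on $\partial P_n$ kills only one tangential component of each one-sided gradient. You must first show that the two one-sided gradients coincide at $a_j$, using tangential continuity along $\Gamma_j$ and $g_j(a_j)=0$, exactly as you do at the center. The two independent side directions then force the common value to vanish. Step~3 relies on this fact to make the second-order ray loads vanish at $a_j$.
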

\begin{proof}
The pulled-back forms \eqref{eq:pulled-forms} depend smoothly on the
parameters and remain coercive for small parameters.  Differentiating
their bounded inverse gives $u_q,u_{qr}\in H^1_0(P_n)$ and their weak
equations.  We establish their spatial regularity only at the reference
polygon.

\emph{State and first derivatives.}
The Dirichlet shift gives $u\in H^{2+\eta}(P_n)$, so $\nabla u$ is
continuous up to the vertices.  Its tangential derivatives vanish on
the two adjacent boundary sides, giving $\nabla u(a_j)=0$.
Rotational symmetry gives $\nabla u(0)=0$.

Let $\nu_j$ point from $T_{j-1}$ into $T_j$ and write
$[f]_j=f|_{T_j}-f|_{T_{j-1}}$.  Continuity of the affine fan extension
implies, for a constant vector $b_{q,j}$,
\[
 [D\theta_q]_j=b_{q,j}\otimes\nu_j,\qquad
 \nu_j\cdot[\mathbb A_q]_j p=-b_{q,j}\cdot p
 \quad(p\in\R^2),
\]
where
$\mathbb A_q=(\operatorname{div}\theta_q)I-D\theta_q-D\theta_q^T$.
Integrating its weak equation by parts sectorwise gives the bulk
right-hand side and the required normal-derivative jump
\begin{equation}\label{eq:first-transmission}
 \begin{aligned}
 -\Delta u_q&=\operatorname{div}\theta_q+\mathbb A_q:D^2u
                   &&\text{in }T_j,\\
 [\partial_{\nu_j}u_q]_j
     &=-\nu_j\cdot[\mathbb A_q\nabla u]_j
       =b_{q,j}\cdot\nabla u &&\text{on }\Gamma_j.
 \end{aligned}
\end{equation}
Here and below a jump first specifies the ray load in the weak equation;
we do not assume the regularity being proved.  With this orientation,
a jump $g$ contributes the distribution $-g\delta_{\Gamma_j}$ to
$-\Delta u_q$, where $g\delta_{\Gamma_j}$ acts by
$v\mapsto\int_{\Gamma_j}gv$.

The bulk term is piecewise $H^\eta$, hence globally $H^\eta$ since
$\eta<1/2$.  Each jump density is $H^{1/2+\eta}$ and vanishes at both
endpoints.  Decompose the weak solution into the Dirichlet bulk solution
and the finitely many ray-source solutions.  The Dirichlet shift and
Lemma~\ref{lem:ray-shift} give broken $H^{2+\eta}$ regularity.
Its one-sided gradients are therefore continuous to sector endpoints.
Tangential derivatives agree across each ray, and the normal jump in
\eqref{eq:first-transmission} vanishes at both endpoints.  Thus all
sector gradients have the same value at the center.  At each outer
vertex, the two sector gradients also agree; the two Dirichlet sides
force this common gradient to be zero.  In general $\nabla u_q(0)$
need not vanish.

\emph{Second derivatives and the center lifting.}
The exact second derivative solves
\[
 a(u_{qr},v)=\ell_{qr}(v)-a_q(u_r,v)-a_r(u_q,v)-a_{qr}(u,v).
\]
Its bulk right-hand side is
\[
 (\operatorname{div}\theta_q)(\operatorname{div}\theta_r)
 -\tr(D\theta_qD\theta_r)
 +\mathbb A_q:D^2u_r+\mathbb A_r:D^2u_q+\mathbb A_{qr}:D^2u,
\]
which is $L^2$ sectorwise and therefore globally $L^2$.
The required normal jump on $\Gamma_j$ is
\begin{equation}\label{eq:second-transmission}
 -\nu_j\cdot[\mathbb A_q\nabla u_r+
             \mathbb A_r\nabla u_q+\mathbb A_{qr}\nabla u]_j.
\end{equation}
It belongs to $H^{1/2+\eta}$ and vanishes at $a_j$.  At the center its
value is
$b_{q,j}\cdot\nabla u_r(0)+b_{r,j}\cdot\nabla u_q(0)$.
These values need not vanish separately, so the ray lemma cannot yet be
applied to the individual rays.

Choose a smooth cutoff $\chi$, supported in an interior disk and equal
to one near the center, and subtract the function
\begin{equation}\label{eq:center-jump-lifting}
 L(x)=\chi(x)\bigl(\theta_q(x)\cdot\nabla u_r(0)
                    +\theta_r(x)\cdot\nabla u_q(0)\bigr).
\end{equation}
It is continuous, belongs to $H^1_0(P_n)$, and is smooth on each sector.
Near the center its normal jump is exactly
$b_{q,j}\cdot\nabla u_r(0)+b_{r,j}\cdot\nabla u_q(0)$.
Consequently $u_{qr}-L$ has ray densities in $H^{1/2+\eta}$ that
vanish at both endpoints, and an $L^2$ bulk right-hand side.  Convex
Dirichlet $H^2$ regularity handles the bulk solution, while
Lemma~\ref{lem:ray-shift} handles each ray solution.  Their sum equals
$u_{qr}-L$ by weak uniqueness, proving broken $H^2$ regularity of
$u_{qr}$.  Finally, the dependence on $q$ is linear and on $(q,r)$
bilinear in finite dimension.  Applying the preceding bounds to the
coordinate directions gives uniform bounds for $|q|,|r|\leq1$.
\end{proof}

\subsection{Quadratic convergence of the Hessian}

On shape-regular conforming $P_1$ refinements of the coarse fan, broken
interpolation and the differentiated Galerkin equations now give
\begin{equation}\label{eq:w-rates}
 \|\nabla(u-u_h)\|=O(h),\qquad \|u-u_h\|=O(h^2),
\end{equation}
and
\begin{equation}\label{eq:U-rate}
 \|\nabla(U_i^p-U_{i,h}^p)\|=O(h).
\end{equation}
Here $h$ is the largest element diameter.  For the state alone, alignment
with the fan rays is unnecessary.  For material derivatives it matters:
gradient jumps lie along element edges on a fitted mesh, allowing
interpolation sector by sector.  No unfitted mesh estimate is claimed.

\begin{theorem}[quadratic Hessian rate at the regular polygon]\label{prop:h2}
For each fixed $n\geq5$, exact $P_1$ Galerkin solutions on shape-regular
conforming refinements fitted to the coarse fan satisfy
\[
 \|D^2J(P_n)-D^2J_h(P_n)\|_2=O(h^2),\qquad
 \|D^2\F(P_n)-D^2\F_h(P_n)\|_2=O(h^2).
\]
Here $\F_h=J_h/A^2$, with the exact polygon area.
The ordered Hessian eigenvalues satisfy the same error bound.  On the
symmetric uniform fan subdivisions used in this paper, $h$ is
proportional to $m^{-1}$, the discrete Hessian has the same Fourier
block structure as the continuous one, and
\begin{equation}\label{eq:h2-rate}
 \|B_k-B_{k,h}\|_2=O(h^2),\qquad
 |\mu_k^\pm-\mu_{k,h}^\pm|=O(h^2).
\end{equation}
This includes the continuous similarity-zero branches when their discrete
values are not imposed algebraically.
\end{theorem}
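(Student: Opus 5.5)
The plan is to apply the second-variation residual identity, Proposition~\ref{prop:second-residual}, entrywise with exact Galerkin solutions, and then pass from entries to matrices and eigenvalues. Fix the $2n$ coordinate directions $q,r\in\{e_{i,x},e_{i,y}\}$. Identity \eqref{eq:second-residual-identity} holds for exact discrete solutions:
\[
 (J-J_h)_{qr}=a(e_q,e_r)-\tfrac12a_{qr}(e,e)+a(Z_{qr}^h-z_h,e).
\]
Hence $|(J-J_h)_{qr}|\le\delta_q\delta_r+\tfrac12C_{qr}\delta_0^2+\delta_0\eta_{qr}$. Proposition~\ref{prop:broken-regularity} together with \eqref{eq:w-rates}--\eqref{eq:U-rate} gives $\delta_0,\delta_q,\delta_r=O(h)$, by broken interpolation on the fitted mesh and C\'ea's lemma.

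The step needing real work is $\eta_{qr}=\|Z_{qr}^h-z_h\|_a=O(h)$. I would split $Z_{qr}^h=u_{qr}+(Z_{qr}^h-u_{qr})$. The difference solves
\[
 a(Z_{qr}^h-u_{qr},v)=a_q(e_r,v)+a_r(e_q,v)+a_{qr}(e,v),
\]
so $\|Z_{qr}^h-u_{qr}\|_a\le C_q\delta_r+C_r\delta_q+C_{qr}\delta_0=O(h)$. Since $z_h$ is the Galerkin projection of $Z_{qr}^h$, C\'ea gives
\[
 \|Z_{qr}^h-z_h\|_a\le\|Z_{qr}^h-I_hu_{qr}\|_a\le\|Z_{qr}^h-u_{qr}\|_a+\|u_{qr}-I_hu_{qr}\|_a .
\]
The last term is $O(h)$ by the broken $H^2$ regularity of $u_{qr}$ from Proposition~\ref{prop:broken-regularity}, using sectorwise interpolation. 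This is valid because the fan rays are mesh edges and $u_{qr}$ is continuous, so the nodal interpolant is conforming. This is the main point where fittedness and the center lifting $L$ enter, and it is the step I expect to be delicate: broken $H^2$ on each $T_j$ is exactly what sectorwise $P_1$ interpolation needs. Every term in the bound is therefore a product of $O(h)$ quantities, so each entry of $D^2J-D^2J_h$ is $O(h^2)$. Since the dimension $2n$ is fixed, the spectral norm bound follows.

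For $\F$ use \eqref{eq:HF-general} for both $J$ and $J_h$ with the same exact area. The differences involve $J-J_h=\tfrac12\|e\|_a^2=O(h^2)$ from \eqref{eq:energy-error-identity}, and the first derivatives
\[
 (J-J_h)_q=a(e_q,e)+\tfrac12a_q(e,e)=O(h^2),
\]
obtained by differentiating \eqref{eq:energy-error-identity} once and using Galerkin orthogonality for the $u_{h,q}$ part. These terms appear multiplied by fixed area quantities, which gives the second bound. Weyl's inequality for symmetric matrices then transfers the $O(h^2)$ bound to the ordered eigenvalues.

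For the symmetric uniform subdivisions, the mesh is invariant under the dihedral group, and so are $V_h$ and the fan extensions $\theta_q$. The discrete analogue of \eqref{eq:dihedral-covariance} then holds by uniqueness of discrete solutions. The proof of Proposition~\ref{prop:block-circ} therefore applies verbatim, so $D^2\F_h$ is block circulant with symbols $B_{k,h}$. Conjugation by the unitary block DFT preserves the spectral norm, so $\|B_k-B_{k,h}\|_2\le\|D^2\F-D^2\F_h\|_2=O(h^2)$. Weyl's inequality applied to each $2\times2$ Hermitian symbol gives $|\mu_k^\pm-\mu_{k,h}^\pm|=O(h^2)$, including the similarity branches. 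For those branches the continuous values are exact zeros and the discrete values need not vanish.
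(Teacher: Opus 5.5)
Your proposal is correct and follows the paper's proof essentially step for step. The shared steps are the product bound from Proposition~\ref{prop:second-residual}, the splitting of $Z_{qr}^h-z_h$ through $u_{qr}$ with C\'ea against the fitted interpolant $I_hu_{qr}$, the once-differentiated energy identity for the area terms in $\F$, and Weyl's inequality combined with the unitary block Fourier transform.

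The only step you compress is $\delta_q=O(h)$. Because the load of $u_{h,q}$ contains $u_h$ rather than $u$, $u_{h,q}$ is not the Ritz projection of $u_q$, so C\'ea alone does not apply. The paper therefore combines C\'ea with the perturbation bound $C_q\delta_0$, exactly as you do for the lifting, to obtain $\delta_q\le C(\inf_{v_h\in V_h}\|u_q-v_h\|_a+\delta_0)$.
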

\begin{proof}
In this proof, take $\delta_0,\delta_q,\delta_r,\eta_{qr}$ to be the
actual energy errors in Proposition~\ref{prop:second-residual}.
Broken interpolation and C\'ea's lemma give $\delta_0=O(h)$.
The first differentiated Galerkin equation uses $u_h$ in its load, so it
is not the Ritz projection of $u_q$.  Subtracting the continuous equation,
using \eqref{eq:Cqr} and Proposition~\ref{prop:broken-regularity}, gives
\[
 \delta_q\leq C\left(\inf_{v_h\in V_h}\|u_q-v_h\|_a+\delta_0\right)=O(h),
 \qquad \delta_r=O(h).
\]
Subtract the exact second-derivative equation from
\eqref{eq:second-lifting} to obtain
\[
 \|Z_{qr}^h-u_{qr}\|_a
 \leq C_q\delta_r+C_r\delta_q+C_{qr}\delta_0=O(h).
\]
Since $z_h$ is the Galerkin projection of $Z_{qr}^h$, a fitted nodal
interpolant $I_hu_{qr}$ gives
\[
 \eta_{qr}\leq\|Z_{qr}^h-I_hu_{qr}\|_a
 \leq\|Z_{qr}^h-u_{qr}\|_a+\|u_{qr}-I_hu_{qr}\|_a=O(h).
\]
Thus no broken $H^2$ estimate for the $h$-dependent lifting $Z_{qr}^h$
is needed.  Every term in \eqref{eq:second-residual-bound} is $O(h^2)$,
uniformly over unit vertex directions.  This proves the operator-norm
bound for $D^2J$.

The energy identity \eqref{eq:energy-error-identity} and its first
derivative give
\[
 J-J_h=\tfrac12a(e,e)=O(h^2),\qquad
 (J-J_h)_q=\tfrac12a_q(e,e)+a(e,e_q)=O(h^2).
\]
Differentiating the exact area factor $A^{-2}$ therefore gives the rate
for $D^2\F$.  Weyl's inequality gives the ordered eigenvalue bound.
On a symmetry-preserving mesh, restriction of the unitary Fourier
transform to each block gives \eqref{eq:h2-rate}.
\end{proof}

This theorem concerns the exact Galerkin Hessian at $P_n$.  It does not
assert uniform broken regularity throughout a neighborhood of vertex
coordinates.  An $O(h^2)$ rate for the computed enclosure radius
\eqref{eq:complete-F-radius} additionally requires approximation bounds
for the chosen fluxes and control of algebraic errors; that rate is not
proved here.  The fixed-mesh certificates use the finite residual bounds
of Section~\ref{sec:validated} independently of this convergence theorem.

\subsection{Observed rates}

For the pentagon, three dyadic meshes $m=16,32,64$ give the observed orders
in Table~\ref{tab:rates}.  The order is computed from
\begin{equation}
 p_h=\log_2\left|\frac{\mu_h-\mu_{h/2}}
                            {\mu_{h/2}-\mu_{h/4}}\right|,
 \qquad h=\frac1{16}.
\end{equation}
\begin{table}[ht]
\centering
\begin{tabular}{cccc}
\toprule
mode & branch & value at $m=64$ & observed order\\
\midrule
$1,4$ & nonsimilarity & $-3.5562221538\,10^{-3}$ & $2.005$\\
$1,4$ & translation (zero) & $-2.8252056067\,10^{-6}$ & $1.962$\\
$2,3$ & lower & $-1.5185488157\,10^{-2}$ & $2.227$\\
$2,3$ & upper & $-9.5016877690\,10^{-3}$ & $2.065$\\
\bottomrule
\end{tabular}
\caption{Dyadic rate estimates for the regular pentagon using $m=16,32,64$.}
\label{tab:rates}
\end{table}
The data are consistent with the rate proved in Theorem~\ref{prop:h2}.
They do not replace the continuous residual bound used for certification
in Section~\ref{sec:validated}.

The residual product bound also exhibits nearly quadratic decay in this
experiment.  For
the $k=2$ normalized radial cosine direction, the floating-point evaluations of
the right-hand side of \eqref{eq:second-residual-bound} at
$m=8,16,32,64$ are respectively
\begin{equation}\label{eq:second-bound-rate-data}
 3.84413\,10^{-2},\quad 1.01047\,10^{-2},\quad
 2.59696\,10^{-3},\quad 6.60922\,10^{-4}.
\end{equation}
The three successive decay exponents are $1.93$, $1.96$, and $1.97$.
These are nonvalidated convergence diagnostics; the sign certificates below
instead use outward residual majorants on each fixed mesh and do not use
an extrapolated rate.  The raw Hessian outputs, commands, timings,
and order estimates for Table~\ref{tab:rates} are archived in
\path{MaxTorsionValidation/results/pentagon_rate_m16_m32_m64/}.
The residual experiment and the earlier $m=8,16,32$ Hessian comparison
are recorded in \path{MaxTorsionValidation/results/pentagon_rate_experiment.txt}.

\section{Pentagon benchmark and validated sign enclosure}\label{sec:pentagon}

At $n=5$, $m=32$, the floating-point FreeFEM computation gives
\begin{equation}
J_h=0.1055228910051348,\qquad A=2.377641290737884.
\end{equation}
The six nonsimilarity eigenvalues of the discrete scale-invariant Hessian
are
\begin{align}\label{eq:n5-eigs}
 -0.0035602324903&\quad(\text{multiplicity }2),\notag\\
 -0.0151782231277&\quad(2),&
 -0.0094961012757&\quad(2).
\end{align}
The discrete translation branch is $-1.1208786894\,10^{-5}$ instead of its
exact continuous value zero; its decay is a useful discretization
diagnostic.  None of these floating-point values is used without a residual
enclosure in the proof below.

For each $k=1,2$, use normalized real cosine radial and tangential directions.
The radial--radial, tangential--tangential, and radial cosine--tangential
sine pairings give $\alpha_k,\beta_k$, and $\gamma_k$.  The radial
cosine--tangential cosine pairing vanishes by reflection symmetry and is
retained in the implementation as a consistency check.  Thus eight entry calculations using
Propositions~\ref{prop:corrected-centers} and
\ref{prop:second-residual} enclose all entries, including the two real
off-diagonal entries known to vanish by symmetry; six runs would suffice for
the independent symbol entries.  Conjugacy gives $k=4,3$.

The input coordinates and direction coefficients are placed in balls of
radius $10^{-13}$.  Before integration, Arb reconstructs every exact lattice
point of the regular fan and every exact normalized Fourier direction and
checks containment.  The state, first-variation, and lifting fluxes are
exactly equilibrated by \eqref{eq:curl-state-flux},
\eqref{eq:curl-material-flux}, and
\eqref{eq:second-source-divergence}.  The finite-system residuals are also
assembled outwardly on the exact interior submatrices.  At $m=32$, every
transferred algebraic energy error is below $5.5\,10^{-13}$, and the largest
algebraic contribution to a normalized Hessian-entry radius is below
$1.1\,10^{-25}$.  The finite-element discretization error, not the floating-point
linear solve, therefore determines the displayed radii.

Table~\ref{tab:pentagon-entry-radii} lists the eight total scale-Hessian
entry radii.

\begin{table}[ht]
\centering
\begin{tabular}{cccc}
\toprule
entry&Exact value&radius ($k=1$)&radius ($k=2$)\\
\midrule
$rr$&$\alpha_k$&$1.748872\,10^{-4}$&$4.743372\,10^{-4}$\\
$tt$&$\beta_k$&$1.392985\,10^{-4}$&$2.863460\,10^{-4}$\\
$rt,\operatorname{Re}$&$0$&$1.329830\,10^{-4}$&$2.949722\,10^{-4}$\\
$rt,\operatorname{Im}$&$\gamma_k$&$1.413011\,10^{-4}$&$2.930991\,10^{-4}$\\
\bottomrule
\end{tabular}
\caption{Outward continuous-plus-algebraic radii for the
pentagon symbol entries.}
\label{tab:pentagon-entry-radii}
\end{table}

The largest radius belongs to the radial diagonal entry in mode $k=2$:
\begin{equation}\label{eq:largest-entry-certificate}
 \widetilde B_{2}^{11}=-0.0147312069261,\qquad
 |B_2^{11}-\widetilde B_{2}^{11}|\leq4.743372\,10^{-4}.
\end{equation}
Taking an upward-rounded common bound for all eight radii gives $4.9\,10^{-4}$.
For each entry the driver machine-checks containment of both conjugate modes
in the corresponding stored-center ball of that radius.  Arb then proves
six strictly negative eigenvalue intervals.  The executable inserts the four
similarity zeros from Proposition~\ref{prop:modes} and asserts the complete
count:
\begin{equation}\label{eq:validated-count}
 \#\{\hbox{certified negative eigenvalues}\}=6=2n-4,
 \qquad \#\{\hbox{exact similarity zeros}\}=4.
\end{equation}

\begin{theorem}[computer-assisted pentagon result]\label{thm:pentagon}
The regular pentagon is a strict local maximizer of torsional rigidity among
nearby pentagons of the same area, modulo translations and rotations.
Equivalently, it is a strict local maximizer of $J/A^2$ modulo similarities.
\end{theorem}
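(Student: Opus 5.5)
The proof reduces Theorem~\ref{thm:pentagon} to the spectral hypothesis of Theorem~\ref{thm:conditional}. That hypothesis is then verified at $n=5$ from the validated entry enclosures above, combined with the exact symmetry information in Proposition~\ref{prop:modes}.

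\emph{Reduction to symbols.} By Proposition~\ref{prop:block-circ} and Proposition~\ref{prop:modes}, the ten eigenvalues of $D^2\F(P_5)$ are the eigenvalues $\mu_k^\pm$ of the Hermitian symbols $B_0,\dots,B_4$. The mode-zero symbol vanishes exactly, since $B_0=0$ by \eqref{eq:mode-zero-gram}, and this accounts for the scaling and rotation zeros. By \eqref{eq:translation-symbol-identities}, $B_1$ has eigenvalues $0$ and $\alpha_1+\beta_1$. By \eqref{eq:conjugate-symbols}, $B_4=\overline{B_1}$ and $B_3=\overline{B_2}$ have the same spectra as $B_1$ and $B_2$. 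It therefore suffices to prove two facts rigorously: $\alpha_1+\beta_1<0$, and both eigenvalues of $B_2$ are negative. Counting then gives $2+2=4$ exact zeros and $1+1+2+2=6=2n-4$ negative eigenvalues.

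\emph{Enclosing the entries.} For $k=1,2$, Proposition~\ref{prop:real-fourier-pairings} identifies $\alpha_k,\beta_k,\gamma_k$ and the vanishing real off-diagonal entry with the real pairings $D^2\F[q,r]$ in normalized radial and tangential cosine and sine directions. For each of these eight pairings, the bounds of Proposition~\ref{prop:corrected-centers} and Proposition~\ref{prop:second-residual} are combined through \eqref{eq:combined-deltas}--\eqref{eq:complete-F-radius}. With the equilibrated fluxes and outward FLINT/Arb evaluation on the $m=32$ fitted fan, this gives $|D^2\F(P_5)[q,r]-\widetilde F_{qr}|\le R_F$. The radii are those in Table~\ref{tab:pentagon-entry-radii}, and every one is at most $4.9\cdot10^{-4}$. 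The verifier also checks containment of each enclosure in the stored-center ball, which covers the exact lattice, the exact directions, and the algebraic residuals. This step carries the real weight of the proof. Its validity rests entirely on the hypotheses of Proposition~\ref{prop:second-residual} being met by guaranteed majorants rather than estimates, so I would audit that the flux mismatches $\Phi$ and the form constants $C_q,C_{qr}$ are upper bounds computed with outward rounding.

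\emph{Sign test.} For $k=2$, substitute the interval entries into \eqref{eq:mode-eigs}, or equivalently into the square-root-free test $\alpha_2+\beta_2<0$, $\alpha_2\beta_2-\gamma_2^2>0$. With centers $-0.01473$, $-0.00994$, $0.00153$ and radius $4.9\cdot10^{-4}$, the trace is at most about $-0.0237$ and the determinant is at least about $1.2\cdot10^{-4}>0$. This yields $\mu_2^\pm\le-0.0082<0$. For $k=1$, the upper endpoint of the enclosure of $\alpha_1+\beta_1$ is negative, since the true value is about $-3.56\cdot10^{-3}$ and the radii are of order $1.7\cdot10^{-4}$. The main risk lies in the margins: these are only a factor of roughly $10$ larger than the radii, so the radii must genuinely be controlled at $m=32$. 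If they were not, I would refine the mesh, relying on the quadratic decay observed in \eqref{eq:second-bound-rate-data}.

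\emph{Conclusion.} The spectrum therefore consists of exactly four similarity zeros and six strictly negative eigenvalues. Theorem~\ref{thm:conditional} then gives strict local maximality of $J/A^2$ modulo similarities, and, restricting to the fixed-area hypersurface, of $J$ modulo translations and rotations.
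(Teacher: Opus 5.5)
Your proposal is correct and follows the paper's route. The four zeros come exactly from Proposition~\ref{prop:modes}: $B_0=0$, plus the translation branch of $B_1$ and $B_4$. The six negative branches come from the outward-rounded $m=32$ entry enclosures, of radius at most $4.9\cdot10^{-4}$, fed into the Hermitian eigenvalue test. The conclusion then follows from Theorem~\ref{thm:conditional}, with no use of the convergence rate.

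One wording slip needs fixing. In the $k=1$ step, the negativity of $\alpha_1+\beta_1$ must be read off the certified center-plus-radius enclosure. It should not be inferred from the floating-point eigenvalue $-3.56\cdot10^{-3}$, which you call the ``true value'' but which is only a computed approximation.
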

\begin{proof}
The exact symmetry argument in Proposition~\ref{prop:modes} supplies the four
zero modes.  The outward residual and eigenvalue calculation above proves
strict negativity of the other six modes.  The conclusion follows from
Theorem~\ref{thm:conditional}.  The convergence theorem is not used here:
all continuous errors are bounded a posteriori on the single fixed mesh
$m=32$.
\end{proof}

\section{Validated certificates for regular polygons}
\label{sec:n3-n10}

The pentagon construction extends without changing the analytic error
identity.  A single global entry radius becomes wasteful as $n$ grows, so we
use a radius $\rho_{n,k}$ shared only by the conjugate pair $k,n-k$.  For odd
$n$ this requires $2(n-1)$ real lifting calculations.  For even $n$ it
requires $2n-2$, because reflection makes the Nyquist off-diagonal exactly
zero.  The $m=32$ run in Table~\ref{tab:n3-n10-certificates} validates
88 real liftings.

\begin{table}[ht]
\centering
\small
\begin{tabular}{rclrrc}
\toprule
$n$ & $m$ & $\rho_{n,k}$, $1\leq k\leq\lfloor n/2\rfloor$
    & liftings & negative & exact zeros\\
\midrule
3  &32&$1.32\,10^{-3}$&4&2&4\\
4  &32&$2.60\,10^{-4},\ 5.70\,10^{-4}$&6&4&4\\
5  &32&$1.80\,10^{-4},\ 4.80\,10^{-4}$&8&6&4\\
6  &32&$1.30\,10^{-4},\ 3.30\,10^{-4},\ 4.10\,10^{-4}$&10&8&4\\
7  &32&$9.60\,10^{-5},\ 2.80\,10^{-4},\ 5.20\,10^{-4}$&12&10&4\\
8  &32&$7.50\,10^{-5},\ 1.85\,10^{-4},\ 4.65\,10^{-4},\ 4.80\,10^{-4}$&14&12&4\\
9  &32&$6.40\,10^{-5},\ 1.90\,10^{-4},\ 4.10\,10^{-4},\ 6.60\,10^{-4}$&16&14&4\\
10 &32&$5.50\,10^{-5},\ 1.60\,10^{-4},\ 4.00\,10^{-4},\ 6.50\,10^{-4},\ 6.20\,10^{-4}$&18&16&4\\
\bottomrule
\end{tabular}
\caption{Modewise continuous-plus-algebraic entry radii and certified Arb
counts.  Conjugate modes use the same radius.}
\label{tab:n3-n10-certificates}
\end{table}

For every row, the executable first checks the exact fan topology and exact
regular geometry, then proves containment of every entry and its conjugate
in the benchmark center plus $\rho_{n,k}$.  The final count is performed on
all $2n$ eigenvalues.  The four zeros are not inferred from small numerical
intervals: they are the two $k=0$ similarity directions and the two
translations inserted from Proposition~\ref{prop:modes}.  For completeness,
Table~\ref{tab:n3-n10-margins} gives an outward-rounded upper
bound $U_n<0$ for every nonsimilarity eigenvalue.  The limiting branch is the
nonzero $k=1$ trace eigenvalue in every row.

\begin{table}[ht]
\centering
\begin{tabular}{rc@{\qquad}rc}
\toprule
$n$ & $U_n$ & $n$ & $U_n$\\
\midrule
3 &$-1.00\,10^{-2}$&7 &$-9.80\,10^{-4}$\\
4 &$-6.20\,10^{-3}$&8 &$-5.90\,10^{-4}$\\
5 &$-3.10\,10^{-3}$&9 &$-3.60\,10^{-4}$\\
6 &$-1.70\,10^{-3}$&10&$-2.20\,10^{-4}$\\
\bottomrule
\end{tabular}
\caption{Certified continuous sign margins: every nonsimilarity Hessian
eigenvalue $\lambda$ satisfies $\lambda\leq U_n<0$.}
\label{tab:n3-n10-margins}
\end{table}

We next refine the same coarse fan, keeping the residual identity and
the 192-bit Arb verification unchanged.  Table~\ref{tab:m64-certificates}
reports the complete tests for $11\leq n\leq20$ at $m=64$.  Every entry and
its conjugate is enclosed before the final eigenvalue test.  The upper-bound
column gives an outward-rounded bound for every nonsimilarity eigenvalue;
a negative value proves strict negativity on the complement of the
similarity directions.
The four exact similarity zeros are excluded from the counts.

\begin{table}[ht]
\centering
\small
\begin{tabular}{rrrrl}
\toprule
$n$ & triangles & negative / required & upper bound & result\\
\midrule
11 & $45\,056$ & 18 / 18 & $-2.11\,10^{-4}$ & certified\\
12 & $49\,152$ & 20 / 20 & $-1.50\,10^{-4}$ & certified\\
13 & $53\,248$ & 22 / 22 & $-1.08\,10^{-4}$ & certified\\
14 & $57\,344$ & 24 / 24 & $-7.99\,10^{-5}$ & certified\\
15 & $61\,440$ & 26 / 26 & $-5.97\,10^{-5}$ & certified\\
16 & $65\,536$ & 28 / 28 & $-4.51\,10^{-5}$ & certified\\
17 & $69\,632$ & 30 / 30 & $-3.44\,10^{-5}$ & certified\\
18 & $73\,728$ & 32 / 32 & $-2.64\,10^{-5}$ & certified\\
19 & $77\,824$ & 30 / 34 & $ 1.35\,10^{-5}$ & inconclusive\\
20 & $81\,920$ & 28 / 36 & $ 1.69\,10^{-4}$ & inconclusive\\
\bottomrule
\end{tabular}
\caption{Validated computations at $m=64$, with $nm^2$ triangles.
The required number of negative eigenvalues is $2n-4$.}
\label{tab:m64-certificates}
\end{table}

At $m=64$, all polygons through $n=18$ in this test range are certified.
For $n=19$, the upper branches in modes $k=6,7$ and their conjugates have
enclosures containing zero.  For $n=20$, this happens in modes $k=6,7,8,9$
and their conjugates.  These inconclusive results concern the present
enclosures and eigenvalue calculation; they do not disprove local
maximality or rule out certification with sharper bounds.

At $m=128$, the complete tests for $19\leq n\leq25$ are reported
in Table~\ref{tab:m128-certificates}.  All polygons through $n=25$
in this range are certified.  In particular, this refinement resolves
the inconclusive $m=64$ tests for $n=19,20$.
The theorem and tables in this paper are restricted to $n\leq25$.
The same procedure applies to larger polygons; a subsequent $n=26$ run
at $m=128$ also completed successfully.  Extending the range further
requires sufficient computing time and memory and, where necessary,
finer meshes to resolve the sign.  No certification limit at this
refinement level is asserted.

\begin{table}[ht]
\centering
\small
\begin{tabular}{rrrrl}
\toprule
$n$ & triangles & negative / required & upper bound & result\\
\midrule
19 & $311\,296$ & 34 / 34 & $-2.73\,10^{-5}$ & certified\\
20 & $327\,680$ & 36 / 36 & $-2.22\,10^{-5}$ & certified\\
21 & $344\,064$ & 38 / 38 & $-1.81\,10^{-5}$ & certified\\
22 & $360\,448$ & 40 / 40 & $-1.49\,10^{-5}$ & certified\\
23 & $376\,832$ & 42 / 42 & $-1.24\,10^{-5}$ & certified\\
24 & $393\,216$ & 44 / 44 & $-1.03\,10^{-5}$ & certified\\
25 & $409\,600$ & 46 / 46 & $-8.71\,10^{-6}$ & certified\\
\bottomrule
\end{tabular}
\caption{Validated computations at $m=128$.  As in
Table~\ref{tab:m64-certificates}, the upper bound applies to every
nonsimilarity eigenvalue and the four exact zeros are excluded.}
\label{tab:m128-certificates}
\end{table}

\FloatBarrier
\begin{proof}[Proof of Theorem~\ref{thm:n3-n10}]
Proposition~\ref{prop:modes} gives the four exact similarity directions in the kernel.  The
outward Arb computations summarized in Tables~\ref{tab:n3-n10-certificates},
\ref{tab:m64-certificates}, and~\ref{tab:m128-certificates} prove strict
negativity of every other eigenvalue, using $m=32$ for $5\leq n\leq10$,
$m=64$ for $11\leq n\leq18$, and $m=128$ for $19\leq n\leq25$.  Apply
Theorem~\ref{thm:conditional}.  No asymptotic rate is used.
\end{proof}

The same certified calculation gives the corresponding inertia for
$n=3,4$, independently recovering the classical local conclusions.

\section{Reproducibility, audit trail, and trusted computing base}
\label{sec:reproducibility}

All paths and commands below are relative to the repository root, which
contains \path{MaxTorsionValidation/}.  The maintained code directory was
formerly named \path{code/}; historical archive manifests retain that old
name.  The computation uses the following files:
\begin{itemize}
\item \path{MaxTorsionValidation/freefem/torsion_hessian.edp}: distributed Hessian,
scale normalization, radial basis, and Fourier symbols;
\item \path{MaxTorsionValidation/freefem/analyze_convergence.py}: dyadic rate estimator;
\item \path{MaxTorsionValidation/freefem/second_variation_lifting.edp}: modewise
first variations, second lifting, and exactly equilibrated curl candidates;
\item \path{MaxTorsionValidation/freefem/run_pentagon_certificate.sh}: the eight-entry
pentagon certificate and final eigenvalue count;
\item \path{MaxTorsionValidation/freefem/run_regular_certificates.sh}: all mode-entry
certificates and final eigenvalue counts for $3\leq n\leq10$;
\item \path{MaxTorsionValidation/freefem/test_extended_certificates.py}: finer-mesh
tests with automatically proposed entry radii, followed by rigorous
containment and complete eigenvalue-count checks;
\item \path{MaxTorsionValidation/freefem/certify_with_rotations.py}: the optimized
continuation, reusing the first-vertex solutions and flux potentials;
\item \path{MaxTorsionValidation/flint/mode_cert.c}: Arb evaluation of the two eigenvalues
of each Hermitian symbol, including per-mode radii and the exact regular
similarity reductions;
\item \path{MaxTorsionValidation/flint/residual_cert.c}: Arb residual certificate
\eqref{eq:algebraic-residual} for SPD PDE systems;
\item \path{MaxTorsionValidation/flint/second_lifting_cert.c}: exact regular-fan input
check, Arb triangle assembly, continuous and algebraic residuals, and the
scale-Hessian entry enclosures used in all the reported certificates.
\end{itemize}

The source code is available at
\begin{center}
\href{https://github.com/beniamin-bogosel/MaxTorsionValidation}{\nolinkurl{https://github.com/beniamin-bogosel/MaxTorsionValidation}}.
\end{center}
Setup instructions and commands for every table are also given in
\path{MaxTorsionValidation/docs/REPRODUCING_SECTION_8.md}.
Reproduction from these sources generates the candidate fields and checks
the full certificate, without requiring a download of the large historical
archives.  As a small starting case, a reader can regenerate and certify
the pentagon using
\begin{verbatim}
python3 MaxTorsionValidation/freefem/certify_with_rotations.py \
  5 5 --m 32 --jobs 2 --cpu-cores 2 --entry-centers \
  --flux-eps 1e-13 --stop-on-failure \
  --archive /tmp/torsion-pentagon
\end{verbatim}
The output directory must be new.  A successful run records
\texttt{CERTIFIED} in \path{n5/RESULT.json}, eight real entry certificates,
and six negative eigenvalues with four exact similarity zeros in the
final mode log.  This is a fresh certificate; floating-point candidates
and the resulting radii need not be identical to the historical files.

The $m=32$ certificates and their reproducibility archive are generated by
\begin{verbatim}
CERT_ARCHIVE_DIR=/tmp/torsion-certificates \
  bash MaxTorsionValidation/freefem/run_regular_certificates.sh 3 10
\end{verbatim}
The driver refuses an existing archive path and writes \texttt{COMPLETE}
only after all 88 entry containments and all eight final inertia assertions
pass.  The generated archive contains the source files, tool versions,
compressed candidate fields, per-entry logs, reference centers, radii,
final mode logs, and checksums.  The candidate fields allow replay without
a new FreeFEM solve.  The root \texttt{README.md} maps the equation labels
to the source files and output names.

The $m=64$ extension can be reproduced with
\begin{verbatim}
python3 MaxTorsionValidation/freefem/test_extended_certificates.py \
  11 20 --m 64 --jobs 4 --cpu-cores 4 \
  --entry-centers --flux-eps 1e-9 \
  --archive /tmp/torsion-certificates-m64
\end{verbatim}
The reference centers can be taken directly from the lifting calculations;
the proposed radii are accepted only after Arb proves every entry
containment.  The auxiliary curl-fit tolerance affects the sharpness of
the candidate flux, while its admissibility and the resulting residual
bound are checked independently.  The PDE-solve tolerances remain
$10^{-13}$.  Each completed polygon has an explicit result record and a
full eigenvalue log; a finished sweep may include inconclusive sign tests.
The $m=128$ tests for $n=19,20,21$ use
\begin{verbatim}
python3 MaxTorsionValidation/freefem/test_extended_certificates.py \
  19 21 --m 128 --jobs 4 --cpu-cores 4 \
  --entry-centers --flux-eps 1e-9 \
  --archive /tmp/torsion-certificates-m128
\end{verbatim}
Every archive path must be new.
The archived results and source snapshots for Tables~\ref{tab:m64-certificates}
and~\ref{tab:m128-certificates} are indexed in
\path{MaxTorsionValidation/results/extended_m64_summary.md} and
\path{MaxTorsionValidation/results/extended_m128_summary.md}, respectively.
For new runs, the two Python drivers compile private verifiers from the
archived C sources and record executable hashes and compiler/FLINT
identification in the manifest.  They also check that each export and
containment record matches the requested polygon, mode, and directions.

For the optimized continuation, the covariance
\eqref{eq:dihedral-covariance} supplies every first variation from the two
solutions at the first vertex.  The torsion solution and the corresponding
flux potentials are also reused.  Each Fourier pairing still requires its
second lifting and an auxiliary flux fit.  These operations generate
candidates; the exact-input, residual, and containment checks in Arb are
unchanged.  The continuation is run by
\begin{verbatim}
python3 MaxTorsionValidation/freefem/certify_with_rotations.py \
  22 25 --m 128 --jobs 6 --cpu-cores 6 \
  --entry-centers --flux-eps 1e-9 \
  --stop-on-failure --archive /tmp/torsion-rotations-m128
\end{verbatim}
On an Intel Core i7-9750H processor, using six physical cores, the
optimized certificates at $m=128$ took approximately $13$--$20$ minutes
per polygon for $22\leq n\leq25$.  The $n=25$ certificate took
$1059$ seconds (about $17.7$ minutes), including candidate generation,
the Arb entry and eigenvalue checks, and compression of the exported
fields.  Allow roughly $70$ minutes to reproduce the four-polygon range
in the command above on comparable hardware; the recorded times total
about $66$ minutes and depend on the machine load.

The trusted analytic reductions are Laurain's formula and its algebraic
reduction, the residual identities proved above, and the exact similarity,
conjugacy, reflection, and Fourier-normalization identities.  The trusted
software base is the audited C source, its compiler and runtime, and the
outward FLINT/Arb operations.  In particular, the executable's
exact-similarity option is used only after
Propositions~\ref{prop:modes} and~\ref{prop:real-fourier-pairings} establish
its hypotheses; it is not a fact inferred by the executable from arbitrary
input.  FreeFEM's mesh coordinates, linear solves, and symbol centers are
only candidates.  The verifier reconstructs the exact fan, imposes exact
zero boundary coefficients on the interior submatrices, evaluates all
residuals outwardly, and proves that each continuous entry enclosure lies in
the same center--radius ball consumed by the final eigenvalue calculation.
The drivers use 192-bit precision for both the containment check and
the final mode calculation, so these are the same Arb balls.

The test suite is \texttt{make -C MaxTorsionValidation/flint test}.  The implementation also checks
the criticality defect
$\|\nabla J-(2J/A)\nabla A\|_\infty$ and Hessian symmetry.  The symmetry defect is measured before symmetrization; measuring it after
projection would give zero by construction.  The implementation also checks
agreement between the eigenvalues of modes $k$ and $n-k$ and the vanishing
of the $k=0$ symbol up to roundoff.

\section{Conclusions and next rigorous steps}

The Bogosel--Bucur strategy reduces local maximality to $n$ Hermitian
$2\times2$ symbols.  Dihedral symmetry and the constant sector gradients of
the coarse hat functions reduce the local terms to the diagonal expression
\eqref{eq:explicit-cancellation}; only the three Gram sums
\eqref{eq:G-three} remain difficult.  The formulas for $X,Y,Z$ show that the
explicit trace contribution depends only on $J$, while the single
sector anisotropy controls the difference between the explicit diagonal
entries.  Similarity
invariance accounts for four zeros.  The
second-variation identity and the broken regularity proved in
Section~\ref{sec:rate} give an $O(h^2)$ Galerkin Hessian error at each
fixed regular polygon, and
exactly equilibrated curl fluxes and Arb triangle integration certify all
$2n-4$ negative nonsimilarity eigenvalues for every $3\leq n\leq25$.  This proves local
maximality throughout that range, with $n=3,4$ serving as classical
consistency cases and $5\leq n\leq25$ giving Theorem~\ref{thm:n3-n10}.
The analytic local-maximality theorem for every $n\geq5$
\cite{BogoselBucurFragala2026} places these finite-range certificates in a
broader setting.  The numerical contribution remains the rigorous
control of geometric Hessians, including the quadratic Galerkin rate,
and a reproducible certification procedure for discrete geometric
optimization.  Further work includes:
\begin{enumerate}
\item extend the modewise lifting certificate beyond the certified range, with refinement
chosen according to the shrinking gap between nonsimilarity eigenvalues
and zero;
\item establish quantitative approximation bounds for the chosen fluxes
and algebraic tolerances that imply an $O(h^2)$ rate for the computed
certificate radius;
\item extend the regularity analysis to uniform estimates on neighborhoods
of perturbed polygons and apply the certification framework to other
discrete geometric optimization problems.
\end{enumerate}

\appendix
\section{Explicit formulas and assembly details}\label{app:certificate-details}

This appendix records the expanded formulas used by the verifier and the
older comparison estimate.  The enclosure argument is given in
Section~\ref{sec:validated}.

\subsection{Differentiated pullback coefficients}\label{app:pullback}

The coefficient matrices used here and in the finite-element assembly are
\begin{align}
 \mathbb A_q&=(\operatorname{div}\theta_q)I-D\theta_q-(D\theta_q)^T,
 \label{eq:Aq-pullback}\\
 \mathbb A_{qr}
 &=\big[(\operatorname{div}\theta_q)(\operatorname{div}\theta_r)
          -\tr(D\theta_q D\theta_r)\big]I\notag\\
 &\quad-(\operatorname{div}\theta_q)\big(D\theta_r+(D\theta_r)^T\big)
       -(\operatorname{div}\theta_r)\big(D\theta_q+(D\theta_q)^T\big)\notag\\
 &\quad+D\theta_q D\theta_r+D\theta_r D\theta_q
       +\big(D\theta_q D\theta_r+D\theta_r D\theta_q\big)^T\notag\\
 &\quad+D\theta_q(D\theta_r)^T+D\theta_r(D\theta_q)^T.
 \label{eq:Aqr-pullback}
\end{align}
The first derivatives in direction $r$ follow by replacing $q$ with $r$.

\subsection{Element matrices and verified fan assembly}\label{app:assembly}

Both FreeFEM drivers construct the coarse mesh by joining the $n$ fan
triangles, then apply \texttt{trunc(coarse,1,split=m)} to subdivide each
triangle into $m^2$ triangles.  This implements the fitted mesh described
in Section~\ref{sec:residual-principle}.  Since the vertex-displacement
fields are affine on the coarse fan, their gradients and the coefficients
of the differentiated forms are constant on every fine triangle.

Let $I$ be the set of interior vertices, and let $K,M$ be the stiffness
and consistent mass matrices restricted to these vertices.  The
Faber--Krahn inequality and the local $P_1$ mass matrix give
\begin{equation}\label{eq:alpha-lower}
 K\succeq\frac{j_{0,1}^2\pi}{A}M,\qquad
 \lambda_{\min}(M)\geq
 \frac1{12}\min_{i\in I}\sum_{T\ni i}|T|,
\end{equation}
The product of the two right-hand constants is a valid
$\underline\alpha$.  Sharper verified generalized-eigenvalue bounds may be
substituted, but are not necessary for logical correctness.  The executable
uses an even more elementary bound specialized to the circumradius-one fan:
$P_n\subset(-1,1)^2$, so Dirichlet domain monotonicity gives
\begin{equation}\label{eq:square-lambda-bound}
 \lambda_1(P_n)\geq\lambda_1((-1,1)^2)=\frac{\pi^2}{2}.
\end{equation}
Together with the mass bound in \eqref{eq:alpha-lower}, this gives the
fully analytic $\underline\alpha$ used for every interval residual.

We follow the interior-submatrix pattern of
\cite{BogoselBucurValidated2024}.  Boundary coefficients are not retained as
small floating-point numbers: every Dirichlet system uses only the
interior submatrix, and its candidate is extended by exact zero on
$\partial P_n$.  On the symmetric fan all $nm^2$ triangles
are congruent and
\begin{equation}\label{eq:analytic-local-matrices}
 |T|=\frac{\sin\vartheta}{2m^2},\qquad
 M_T=\frac{|T|}{12}
 \begin{pmatrix}2&1&1\\1&2&1\\1&1&2\end{pmatrix},
 \qquad (K_T)_{ij}=|T|\nabla\lambda_i\cdot\nabla\lambda_j.
\end{equation}
For the ordering $0,m^{-1}a_0,m^{-1}a_1$, the last matrix is explicitly
\begin{equation}\label{eq:analytic-local-stiffness}
 K_T=
 \begin{pmatrix}
 \tan(\vartheta/2)&-\tan(\vartheta/2)/2&-\tan(\vartheta/2)/2\\
 -\tan(\vartheta/2)/2&(\tan(\vartheta/2)+\cot\vartheta)/2&-\cot\vartheta/2\\
 -\tan(\vartheta/2)/2&-\cot\vartheta/2&(\tan(\vartheta/2)+\cot\vartheta)/2
 \end{pmatrix};
\end{equation}
the other elements give rotated copies and permutations.  The differentiated
element matrices and loads are equally explicit:
\begin{equation}\label{eq:analytic-differentiated-matrices}
\begin{aligned}
 (K_{q,T})_{ij}&=|T|\nabla\lambda_i^T\mathbb A_q\nabla\lambda_j,
 & (K_{qr,T})_{ij}&=|T|\nabla\lambda_i^T\mathbb A_{qr}\nabla\lambda_j,\\
 f_T&=\frac{|T|}{3}(1,1,1)^T,
 & f_{q,T}&=\frac{|T|\operatorname{div}\theta_q}{3}(1,1,1)^T.
\end{aligned}
\end{equation}
The mixed load is
\[
 f_{qr,T}=\frac{|T|}{3}
 \big[(\operatorname{div}\theta_q)(\operatorname{div}\theta_r)
      -\tr(D\theta_q D\theta_r)\big](1,1,1)^T.
\]

In the implementation every trigonometric constant and every local
entry is evaluated by Arb.  Floating-point coordinates are used only to
nominate integer fan coordinates $(s,a,b)$; Arb then proves that the exported
coordinate and Fourier-direction balls contain the corresponding exact
trigonometric values.  The nomination is also proved bijective.  After this
check the decimal geometry and directions are discarded and the analytic fan
is regenerated in Arb.

The integer connectivity is checked independently.  Besides connectedness,
boundary degree two, the Euler identity, $nm$ boundary vertices, $nm^2$
triangles, and absence of duplicates, the verifier compares the full triangle
multiset with the two canonical families
\[
 (a,b),(a+1,b),(a,b+1),\qquad
 (a+1,b),(a+1,b+1),(a,b+1)
\]
in every sector.  This equality establishes the prescribed triangulated
disk and its single boundary cycle.  Hence the assembly is the analytic assembly
\eqref{eq:analytic-local-matrices}--\eqref{eq:analytic-differentiated-matrices}
on the prescribed regular fan.  FreeFEM supplies only floating-point candidate
coefficient vectors, never an authoritative matrix, geometry, or boundary
condition.

\subsection{Flux formulas}\label{app:fluxes}

The volume source in the second lifting admits an explicit equilibrated
flux because
\begin{equation}\label{eq:second-source-divergence}
 (\operatorname{div}\theta_q)(\operatorname{div}\theta_r)-\tr(D\theta_q D\theta_r)
 =\operatorname{div}\big((\operatorname{div}\theta_q)\theta_r
                         -D\theta_q\,\theta_r\big).
\end{equation}
The vector on the right has a continuous normal trace across each fitted fan
edge: continuity of a piecewise-affine field implies that the jump of its
gradient is $a\otimes n$, and the two normal-jump terms cancel.  Consequently
the negative of this explicit vector plus a curl potential is an exactly
equilibrated $H(\operatorname{div})$ flux for \eqref{eq:second-lifting}.

For the modewise computation, the four potentials
$\psi_{u,h},\psi_{q,h},\psi_{r,h},\psi_{z,h}$ are continuous and piecewise
affine and are treated as exact functions specified by their rounded nodal
values.  The flux mismatches used in \eqref{eq:combined-deltas} are
\begin{align*}
 \Phi_0&=\big\|-x/2+\operatorname{curl}\psi_{u,h}
                         -\nabla\widetilde u\big\|,\\
 \Phi_q&=\big\|-\theta_q+\operatorname{curl}\psi_{q,h}
             -\nabla\widetilde u_q-\mathbb A_q\nabla\widetilde u\big\|,\\
 \Phi_r&=\big\|-\theta_r+\operatorname{curl}\psi_{r,h}
             -\nabla\widetilde u_r-\mathbb A_r\nabla\widetilde u\big\|,\\
 \Phi_z&=\Big\|-(\operatorname{div}\theta_q)\theta_r+D\theta_q\,\theta_r
             +\operatorname{curl}\psi_{z,h}-\nabla\widetilde z\\
 &\hspace{22mm}-\mathbb A_q\nabla\widetilde u_r
             -\mathbb A_r\nabla\widetilde u_q
             -\mathbb A_{qr}\nabla\widetilde u\Big\|.
\end{align*}
The fluxes in the first-derivative equations have divergence
$-\operatorname{div}\theta_q$ and $-\operatorname{div}\theta_r$.
Equation~\eqref{eq:second-source-divergence} gives the required divergence
for the lifting flux.  The form constants in \eqref{eq:Cqr} can be taken
as the maxima, over fan triangles, of the matrix operator norms of
$\mathbb A_q,\mathbb A_r,\mathbb A_{qr}$.  Thus the mismatch norms and all
transfer constants reduce to explicit triangle calculations.

For comparison, the general residual estimate also allows a flux whose
divergence is not exactly equilibrated.

For the torsion solution, define
\begin{equation}
 R_u(v)=\int_{P_n}v-\int_{P_n}\nabla u_h\cdot\nabla v.
\end{equation}
The residual identity gives
$\|\nabla(u-u_h)\|=\|R_u\|_{H^{-1}}$, with the dual norm taken relative
to $\|\nabla v\|$.  A guaranteed functional majorant is,
for any $y_h\in H(\operatorname{div};P_n)$,
\begin{equation}\label{eq:flux-majorant}
 \|\nabla(u-u_h)\|
 \leq \|\nabla u_h-y_h\|+C_F\|1+\operatorname{div}y_h\|,
 \qquad
 C_F\leq\frac{\sqrt A}{j_{0,1}\sqrt\pi}.
\end{equation}
Indeed, integration by parts gives
\[
 R_u(v)=\int(y_h-\nabla u_h)\cdot\nabla v
       +\int(1+\operatorname{div}y_h)v;
\]
testing with the error and using the Friedrichs inequality proves
\eqref{eq:flux-majorant}.  This is a functional error majorant of the type
developed in \cite[Chapter~3]{Repin2008}; the equilibrated-flux literature
is discussed in Section~\ref{sec:residual-principle}.  An equilibrated flux
makes the second term vanish, recovering \eqref{eq:equilibrated-majorant}.

The same construction applies to the material equations.  With
$e_c$ the $c$-th coordinate vector, define
\begin{align}
 G_{i,c}(v)&=-(\partial_c\phi_i)\nabla v
 +(\partial_c v)\nabla\phi_i
 +(\nabla v\cdot\nabla\phi_i)e_c,
 &g_{i,c}&=\partial_c\phi_i.                         \label{eq:G-material}
\end{align}
Then \eqref{eq:material} is
\begin{equation}
 \int\nabla U_i^c\cdot\nabla v
 =\int G_{i,c}(u)\cdot\nabla v+\int g_{i,c}v.
\end{equation}
Choose a globally $H(\operatorname{div})$-conforming flux with the required
divergence.  Again it can be constructed without a mixed solve:
\begin{equation}\label{eq:curl-material-flux}
 q_{i,c,h}=-\phi_i e_c+\operatorname{curl}\psi_{i,c,h},
 \qquad \psi_{i,c,h}\text{ continuous and piecewise affine}.
\end{equation}
Because $\phi_i$ is globally continuous,
$\operatorname{div}q_{i,c,h}=-\partial_c\phi_i=-g_{i,c}$, including across
the fan interfaces.  With $\delta_0\geq\|u-u_h\|_a$, integration by parts gives
\begin{align}
 \|\nabla(U_i^c-U_{i,h}^c)\|
 \leq{}&\|G_{i,c}(u_h)+q_{i,c,h}-\nabla U_{i,h}^c\|
       +\|G_{i,c}(u)-G_{i,c}(u_h)\|\notag\\
 \leq{}&\|G_{i,c}(u_h)+q_{i,c,h}-\nabla U_{i,h}^c\|
       +\|\nabla\phi_i\|_{L^\infty}\delta_0.          \label{eq:material-majorant}
\end{align}
The last constant is exact: pointwise, the linear map from
$\nabla(u-u_h)$ to the difference of the two $G$ fields is a scaled orthogonal map
of norm $|\nabla\phi_i|$.  For the regular fan triangulation it equals
$1/\sin\vartheta$.  The benchmark chooses each curl potential by a scalar
least-squares solve.  The solve need not be enclosed: every rounded nodal
vector defines an admissible flux satisfying the divergence constraint
exactly; optimization affects sharpness only.

\subsection{Comparison with direct termwise bounds}\label{app:direct-bounds}

The following estimate illustrates the loss incurred by propagating state
errors separately through the distributed Hessian.  It is a diagnostic;
the sign certificates use \eqref{eq:complete-F-radius}.

Suppose the resulting bounds are
\begin{equation}
 \|\nabla(u-u_h)\|\leq\delta_0,\qquad
 \|\nabla(U_i^p-U_{i,h}^p)\|\leq\delta_{i,p}.
\end{equation}
For example, the Gram term in \eqref{eq:HJ} then satisfies
\begin{align}
|a(U_i^p,U_j^q)-a(U_{i,h}^p,U_{j,h}^q)|
\leq{}&\delta_{i,p}\|\nabla U_{j,h}^q\|
      +\delta_{j,q}\|\nabla U_{i,h}^p\|
      +\delta_{i,p}\delta_{j,q}.                    \label{eq:gram-error}
\end{align}
The quadratic $u$ terms are treated by
\begin{equation}
 \|\nabla u\otimes\nabla u-
   \nabla u_h\otimes\nabla u_h\|_{L^1}
 \leq\delta_0(2\|\nabla u_h\|+\delta_0),             \label{eq:wquad-error}
\end{equation}
multiplied by the explicit $L^\infty$ norms of the hat gradients.  The
$L^2$ state error needed for the $u$ term follows from $C_F\delta_0$.
Equations \eqref{eq:gram-error}--\eqref{eq:wquad-error}, followed by
\eqref{eq:HF-general} and the Fourier sum, give a rigorous entry radius for
each $B_k$.  Arb then evaluates \eqref{eq:mode-eigs}; an interval is certified
negative precisely when its upper endpoint is negative.

For $n=5,m=32$, at 192-bit precision and coordinate radius $10^{-13}$, Arb
gives
\begin{align}
 \delta_0&\leq0.01616456931, &
 \max_{i,c}\delta_{i,c}&\leq0.03532843568,\notag\\
 \|\nabla u_h\|&\leq0.4593971942, &
 \max_{i,c}\|\nabla U_{i,h}^c\|&\leq0.2367189574.
                                                        \label{eq:arb-majorants}
\end{align}
The displayed decimals are rounded upward from the Arb intervals.  Applying
\eqref{eq:gram-error} entry by entry then yields a rigorous
symbol-entry radius that is too large to certify the sign, namely
$0.08939022$.  This uniform propagation discards Galerkin cancellation.
At $m=64$ the validated values are $\delta_0\leq0.008112289$ and symbol
radius $0.04430895$, consistent with the first-order terms in the bound.  Proposition~\ref{prop:second-residual}
gives a quadratic residual representation.
These comparison values come from the older
\texttt{majorant\_cert.c} path with the benchmark flags
\texttt{-majorant 1} and \texttt{-export-majorant}.  It encloses the
triangle integrals and supplied candidate fields, but does not independently
establish the canonical fan incidence and boundary-node set.  The theorem
certificates use the full reconstruction in
\texttt{second\_lifting\_cert.c}.

\section{Development history and AI assistance}
\label{app:AIhistory}

The paper follows the proof strategy developed for local minimality in the
polygonal Faber--Krahn problem in
\cite{BogoselBucur2024,BogoselBucurValidated2024}:
\begin{itemize}[noitemsep]
\item Compute the Hessian with respect to vertex coordinates from the
second shape derivative, derived for torsional rigidity in
\cite{Laurain2020}.
\item At the regular polygon, change to radial--tangential coordinates to
obtain a block-circulant Hessian.  The Fourier reduction in
\cite{Tee2007}, also used in \cite{BogoselBucur2024}, expresses its
eigenvalues in terms of explicit contributions and terms involving
material derivatives.
\item Approximate the torsion state and its material derivatives by
piecewise affine finite elements.  Bound the discretization, algebraic,
and rounding errors to obtain guaranteed Hessian eigenvalue enclosures.
\item Use the four exact similarity zeros of the scale-invariant Hessian.
If its remaining $2n-4$ eigenvalues are strictly negative, the regular
$n$-gon is a strict local maximizer of torsional rigidity at fixed area.
\end{itemize}

\textbf{Models and supplied material.}
ChatGPT 5.6 Sol and 6 Astra were used through Codex in VS Code, with
reasoning settings ranging from \emph{high} to \emph{ultra}.  The
\texttt{Papers} folder contained PDF copies of
\cite{Laurain2020,BogoselBucur2024,BogoselBucurValidated2024} and the
author's notes on regular polygons and simplifications of Laurain's
polygonal shape derivative.

The author also supplied FreeFEM code for assembling the Hessian from
\cite{Laurain2020} and computing its eigenvalues.  For regular polygons,
it uses a symmetric mesh invariant under the dihedral group; the code
also handles nonregular polygons.  This initial code was developed and
tested independently by the author and served as a benchmark for the
subsequent computations.  The working environment contained installations
of Python, FreeFEM~\cite{Hecht2012}, and FLINT/Arb that had been tested in
previous projects.

The initial prompt is reproduced verbatim below:

\emph{
	Goal: local maximality of the regular n-gon for the Torsional rigidity. Follow the Bucur-Bogosel strategy: compute block circulant Hessian eigenvalues. Then attempt to prove negativity theoretically. Alternatively use validated computing to find 2n-4 negative eigenvalue in the scale invariant setting, or additive setting (use freefem codes as benchmarks). Write everything in a latex file on the theoretical side. Codes in a separate code folder. Use Flint for validated computing. Residuals for PDE-problems control error. Conjecture then attempt to find the optimal error rate in terms of mesh size for finite elements approximation. Write results in the latex file, separate sections.
}

The initial conversation is recorded in \texttt{PromptHistory.md} in the
code repository.  The main stages of development were as follows:
\begin{itemize}[noitemsep]
\item The initial draft did not contain the explicit Hessian eigenvalue
formulas corresponding to those in \cite{BogoselBucur2024}.  Further
direction from the author led to their derivation and to the addition of
the missing reference \cite{Tee2007} for the block-circulant reduction.
\item Floating-point experiments preceded the FLINT implementation.  They
examined the eigenvalue signs and distances from zero to assess the
accuracy needed for interval certification of the $2n-4$ negative
eigenvalues.
\item The error estimates were developed using the guaranteed-flux
principle discussed in \cite{ErnVohralik2015} and the explicit flux-plus-curl
construction of Section~\ref{sec:residual-principle}.  The implementation
does not use the mixed patch problems of \cite{ErnVohralik2015}.  The
second-variation Galerkin identity in Proposition~\ref{prop:second-residual}
retains products of energy errors in the Hessian bound.  The earlier
certificates in \cite{BogoselBucurValidated2024} establish the cases
$n=5,6$ for the first Dirichlet eigenvalue.  Since that objective differs
from torsion, the certified ranges alone do not measure the improvement
in error estimation.
\item For the pentagon, an initial prototype produced four intervals
containing zero and six strictly negative intervals.  Containment of zero
did not prove the four exact zeros, and the prototype did not yet provide
a complete continuous-error certificate.
\item Attempts to prove negativity analytically by exploring further
relations between the eigenvalues were unsuccessful.
\item A subsequent review identified the missing steps in the pentagon
certificate.  The author supplied the repository accompanying
\cite{BogoselBucurValidated2024} as a model for controlling the errors of
floating-point finite-element solutions through their residuals.  The
resulting complete pentagon certificate was then extended to $n=10$.
\item For the extension to $n=25$, the author requested that the code use
dihedral symmetry to recover all first material derivatives from the two
solutions associated with the first vertex.  The previous implementation
computed them separately for all $2n$ vertex coordinates.
\item An adversarial audit of the theoretical and computational arguments
identified inaccuracies that were subsequently corrected.
\end{itemize}

The author then reviewed the manuscript and made or requested the
following revisions:
\begin{itemize}[noitemsep]
\item Remove unnecessary notation, including abbreviations for
$\vartheta/2$ and $k \vartheta$, and symbols introduced for a single use.
\item Remove theoretical developments belonging to incomplete proof
attempts.
\item Add references on recent advances in torsion, validated PDE
computations, and spectral geometry.
\item Reorganize the error analysis around discretization errors,
algebraic errors, and rounding errors controlled by interval arithmetic.
\end{itemize}

The development history and code are available in the GitHub repository:
\begin{center}
\href{https://github.com/beniamin-bogosel/MaxTorsionValidation}{\nolinkurl{https://github.com/beniamin-bogosel/MaxTorsionValidation}}
\end{center}
The repository includes the original FreeFEM benchmarks and the interval
verification code.  The \texttt{DetailedExplanations} folder contains
three lectures covering the Hessian eigenvalue formulas, the error
estimates for certification, and the FLINT/Arb implementation.


\textbf{Data availability.} The source code and instructions for
regenerating the numerical certificates are available at the following
repository.  Section~\ref{sec:reproducibility} gives a small pentagon example
and the commands for the reported tables.  The large generated candidate
archives are excluded from the source distribution.  Readers can produce
new candidate fields, residual bounds, and complete sign certificates
from the supplied code; replaying historical candidates is a separate
option when those files are available.
\begin{center}
	\href{https://github.com/beniamin-bogosel/MaxTorsionValidation}{\nolinkurl{https://github.com/beniamin-bogosel/MaxTorsionValidation}}
\end{center}

\textbf{Conflicts of interest.} The author declares no conflicts of interest.

\textbf{AI usage disclosure.} Appendix~\ref{app:AIhistory} describes the use
of AI in this project.  The author reviewed the theoretical results and
numerical code and takes full responsibility for the contents of the paper.

\bibliographystyle{abbrv}
\bibliography{torsional_rigidity_regular_polygon}

\end{document}